\newcolumntype{L}{>{\raggedright\arraybackslash}p{6cm}} 
\newcolumntype{R}{>{\raggedleft\arraybackslash}p{8cm}}  
\newcolumntype{B}{|>{\bfseries\arraybackslash}p{3cm}|}  
\title{Convex order and increasing convex order for McKean-Vlasov processes with common noise}
\newcommand{\footremember}[2]{
   \footnote{#2}
    \newcounter{#1}
    \setcounter{#1}{\value{footnote}}
}
\newcommand{\footrecall}[1]{
    \footnotemark[\value{#1}]
} 
\newcommand{\leqnomode}{\tagsleft@true}
\newcommand{\reqnomode}{\tagsleft@false}
\author{%
Armand Bernou\footremember{1}{University Lyon 1, ISFA, LSAF (EA 2429), Lyon, France. E-mail address: \texttt{armand.bernou@univ-lyon1.fr}}%
\and Th\'eophile Le Gall\footremember{2}{CEREMADE, CNRS, UMR 7534, Universit\'e Paris-Dauphine, PSL University, 75016 Paris, France.
E-mail address: \texttt{legall@ceremade.dauphine.fr} and \texttt{liu@ceremade.dauphine.fr}}%
\and Yating Liu\footrecall{2} 
}
\numberwithin{equation}{section}
\newtheorem{thm}{Theorem}[section]
\theoremstyle{plain}
\newtheorem{lem}[thm]{Lemma}
\newtheorem{prop}[thm]{Proposition}
\newtheorem{cor}[thm]{Corollary}
\newtheorem{defn}[thm]{Definition}
\newenvironment{manualtheorem}[1]{%
  \manualtheoreminner
}{\endmanualtheoreminner}
\theoremstyle{remark}
\newtheorem{rem}[thm]{Remark}
\newcommand{\vertiii}[1]{{\left\vert\kern-0.25ex\left\vert\kern-0.25ex\left\vert #1 
    \right\vert\kern-0.25ex\right\vert\kern-0.25ex\right\vert}}
\newcommand{\vertii}[1]{\left\Vert #1\right\Vert}
\newcommand{\PPRD}{\mathcal{P}_{p}(\mathbb{R}^{d})}
\newcommand{\R}{\mathbb{R}}
\newcommand{\PP}{\mathbb{P}}
\newcommand{\FF}{\mathbb{F}}
\newcommand{\calF}{\mathcal{F}}
\newcommand{\RD}{\mathbb{R}^{d}}
\newcommand{\RR}{\mathbb{R}}
\newcommand{\EE}{\mathbb{E}\,}
\newcommand{\calP}{\mathcal{P}}
\newcommand{\calL}{\mathcal{L}}
\renewcommand{\d}{\mathrm{d}}
\newcommand{\rr}{\color{red}}
\newcommand{\bb}{\color{blue}}
\newcommand{\dd}{\color{black}}
\newcommand{\conright}{\preceq_{\,\mathrm{cv}}}
\newcommand{\conleft}{\succeq_{\,\mathrm{cv}}}
\newcommand{\iconright}{\preceq_{\,\mathrm{icv}}}
\newcommand{\colaw}{\mathbb{E}^1}
\newcommand{\Yat}[1]{\todo[color=blue!20]{{\bf Yating:} #1}}
\begin{document}
\maketitle 

%

\begin{abstract}
We establish results on the conditional and standard convex order, as well as the increasing convex order, for two processes \( X = (X_t)_{t \in [0, T]} \) and \( Y = (Y_t)_{t \in [0, T]} \), defined by the following McKean-Vlasov equations with common Brownian noise \( B^0 = (B_t^0)_{t \in [0, T]} \):
\begin{align}
&dX_t=b(t, X_t, \mathcal{L}^1(X_t))\d t+\sigma(t, X_t, \mathcal{L}^1(X_t))\d B_t+\sigma^0 (t,  \mathcal{L}^1(X_t))\d B^0_t,\nonumber\\
&dY_t=\,\beta(t, Y_t\,, \mathcal{L}^1(Y_t\,))\d t+\,\theta(t, Y_t\,, \mathcal{L}^1(Y_t\,))\d B_t\,+\,\theta^0 (t, \mathcal{L}^1(Y_t\,))\d B^0_t,\nonumber
\end{align}
where \( \mathcal{L}^1(X_t) \)  (\textit{respectively} \( \mathcal{L}^1(Y_t) \))  denotes a version of the conditional distribution   of \( X_t \) (\textit{resp.} \( Y_t \)) given \( B^0 \).  These results extend those established for standard McKean-Vlasov equations in \cite{Liu2023functional} and \cite{liu2021monotone}.  Under suitable conditions, for a (non-decreasing) convex functional $F$ on the path space with polynomial growth, we show \( \mathbb{E}[F(X) \mid B^0] \leq \mathbb{E}[F(Y) \mid B^0] \) almost surely. Moreover, for a (non-decreasing) convex functional $G$ defined on the product space of paths and their marginal distributions, we establish
\[
\EE \Big[\,G\big(X, (\mathcal{L}^1(X_t))_{t\in[0, T]}\big)\,\Big| \, B^0\,\Big]\leq \EE \Big[\,G\big(Y, (\mathcal{L}^1(Y_t))_{t\in[0, T]}\big)\,\Big| \, B^0\,\Big] \quad \text{almost surely}.
\]
Similar convex order results are also established for the corresponding particle system. Finally, we explore applications of these results to stochastic control problems and to the interbank systemic risk model introduced by Carmona-Fouque-Sun \cite{Carmona_Fouque_Sun_2015}.
\end{abstract}

\noindent\textbf{Keywords and phrases:} Conditional convex order, Convex order, Increasing convex order, McKean–Vlasov equation with common noise, Mean field games, Particle system.




\section{Introduction}

This paper establishes the conditional convex order, the standard convex order, and the increasing convex order between two stochastic processes  solving  McKean-Vlasov equations driven by a common Brownian noise. \dd To provide context, we begin by introducing the notions of (conditional) convex order and increasing convex order in Subsection~\ref{subsec:def-conv-order}, followed by a presentation of the McKean–Vlasov equation with common noise in Subsection~\ref{subsec:mkv-common-noise}. A brief overview of the main objective, the core technical ideas, and the structure of the paper is provided in Subsection~\ref{subsec:overview}. The assumptions underlying the main results, as well as the main convex order results—Theorems~\ref{thm:convex_order_cond} and~\ref{thm:increasing_convex_order}—are presented in Subsection~\ref{subsec:assum-main-results}. The notations used throughout the paper are summarized in Subsection~\ref{subsec:notations}.

\subsection{Definition of the (conditional) convex order and increasing convex order}\label{subsec:def-conv-order}

For two integrable random variables $U, V: (\Omega, \mathcal{F}, \mathbb{P})\rightarrow \big(\mathbb{R}^{d}, \mathcal{B}(\mathbb{R}^{d})\big)$, we say that $U$ is dominated by $V$ for the standard convex order (\textit{respectively} for the standard increasing convex order),  denoted by $U\conright V$ (\textit{resp.} $U\iconright V$), if for any convex function (\textit{resp.} any 
non-decreasing convex function with respect to the usual coordinate-wise partial order, hereinafter shortened to  non-decreasing convex function) $\varphi: \mathbb{R}^{d}\rightarrow\mathbb{R}$, such that $\mathbb{E} \,\varphi(U)$ and $\mathbb{E} \,\varphi(V)$ are well defined in $(-\infty, +\infty]$,
\begin{equation}\label{eq:defconv}
\mathbb{E} \,\varphi(U)\leq \mathbb{E} \,\varphi (V).
\end{equation}
Moreover, the definition of the standard convex order (\textit{resp.} standard increasing convex order) has the obvious equivalent version for two probability distributions $\mu, \nu$ on $\RD$ with finite first moment: we say that the distribution $\mu$ is dominated by $\nu$ for the  standard convex order (\textit{resp.} increasing convex order), denoted by $\mu\conright\nu$ (\textit{resp.} $\mu\iconright\nu$), if, for every convex function  (\textit{resp.} any non-decreasing convex function)
$\varphi:\mathbb{R}^{d}\rightarrow\mathbb{R}$, 
\begin{equation}\label{defconvmeasure}
\int_{\mathbb{R}^{d}} \varphi(\xi)\mu(\d \xi)\leq \int_{\mathbb{R}^{d}}\varphi(\xi)\nu(\d \xi).
\end{equation}

For two integrable random variables $U, V: (\Omega, \mathcal{F}, \mathbb{P})\rightarrow \big(\mathbb{R}^{d}, \mathcal{B}(\mathbb{R}^{d})\big)$, we say that $U$ is less than $V$ in the conditional convex order (\textit{respectively}, increasing convex order)  given a sub-$\sigma$-algebra $\mathcal{G}\subset \calF$ and write $U \;| \;\mathcal{G}\; \conright V \;| \;\mathcal{G} $ (\textit{resp.} $U \;| \;\mathcal{G}\; \iconright V \;| \;\mathcal{G} $), if 
\begin{equation}\label{eq:def-conditional convex order}
    \EE \big[ \varphi (U)\;|\; \mathcal{G}\big]\leq \EE \big[ \varphi (V)\;|\; \mathcal{G}\big]\quad \text{almost surely}
\end{equation}
for all convex functions (\textit{resp.} non-decreasing convex functions) $\varphi :\RD\rightarrow \RR$ such that $\varphi(U)$ and $\varphi(V)$ are integrable. In the special case when $\mathcal{G}=\sigma(Z)$ is generated by a random element $Z$, we write $U \;| \;Z\; \conright V \;| \;Z $ (\textit{resp.} $U \;| \;Z\; \iconright V \;| \;Z $). Remark that the conditional (increasing) convex order defined by \eqref{eq:def-conditional convex order} is stronger than the standard (increasing) convex order defined by \eqref{eq:defconv} (see \cite[Proposition 2.1]{MR3648045}). Let $\mathcal{L}(U|\mathcal{G})$ and $\mathcal{L}(V|\mathcal{G})$ be respectively \textit{regular} conditional distributions of $U$ and $V$ given $\mathcal{G}$ (see \cite[Chapter 6]{MR1876169} for the definition of regular conditional distribution), then, from \cite[Proposition 2.5]{MR3648045},
\begin{align}
&U \;| \;\mathcal{G}\; \conright V \;| \;\mathcal{G} \quad \Longleftrightarrow \quad \mathcal{L}(U|\mathcal{G})\conright \mathcal{L}(V|\mathcal{G}) \quad \text{almost surely},\nonumber\\
&U \;| \;\mathcal{G}\; \iconright V \;| \;\mathcal{G} \quad \Longleftrightarrow \quad \mathcal{L}(U|\mathcal{G})\iconright \mathcal{L}(V|\mathcal{G}) \quad \text{almost surely}.
\end{align}
Such regular conditional distribution always exists if $\mathcal{G}=\sigma(Z)$ is generated by some random element $Z$ (see \cite[Theorem 6.3 and 6.4]{MR1876169}, \cite{MR770643}). 

\subsection{Definition of the McKean-Vlasov equation with common noise}\label{subsec:mkv-common-noise}

 Let $T > 0$ be the fixed time horizon. For a Polish space $S$ equipped with the distance $d_S$ and for $p\geq 1$, we denote by $\mathcal{P}(S)$  the set of all probability measures on $S$, and by $\mathcal{P}_p(S)$ the set of all probability measures on $S$ having $p$-th finite moments, equipped with the Wasserstein distance $\mathcal{W}_p$ (see e.g. \cite[Definition 6.1]{villani2008optimal} for the definition). We use $\mathcal{C}([0,T], S)$ to represent the set of continuous mappings $t\mapsto (\alpha_t)_{t\in[0,T]}$ defined on  $[0,T]$ and valued in $S$. On the space $\mathcal{C}([0,T], \RD)$, we use the sup-norm $\Vert \cdot \Vert_{\sup}$, and on the space $\mathcal{C}([0,T],\mathcal{P}_p(\RD))$, we define the following distance: 
\begin{equation}\label{distancedc}
  d_{\mathcal{C}}\big((\mu_{t})_{t\in[0, T]}, ( \nu_t)_{t\in[0, T]}\big)\coloneqq\sup_{t\in[0, T]}\mathcal{W}_{p}(\mu_{t},  \nu_t).
\end{equation}

 We place ourselves in the setting of \cite[Chapter 2]{MR3753660}, which means, we consider two complete probability spaces $(\Omega^0, \mathcal{F}^0,\mathbb{P}^0)$ and $(\Omega^1, \mathcal{F}^1,\mathbb{P}^1)$ endowed with two right-continuous and complete filtrations $\FF^0=(\calF_t^0)_{t\in[0,T]}$ and $\FF^1=(\calF_t^1)_{t\in[0,T]}$. The product structure is defined by \begin{equation}\label{eq:def-prod-space}
\Omega =\Omega^0\times \Omega^1, \quad \calF,\quad \FF=(\calF_t)_{t\in[0,T]}, \quad \PP,
\end{equation}
where $(\calF, \PP)$ is the completion of $(\calF^0\otimes \calF^1,\;\PP^0\otimes \PP^1)$ and $\FF$ is the complete and right continuous augmentation of $(\calF_t^0\otimes \calF_t^1)_{t\in[0,T]}$. 

For an $\RD$-valued random variable $U$ defined on the above product structure $(\Omega, \calF, \PP)$, we denote by $\mathcal{L}(U)$ its probability distribution, that is, $\mathcal{L}(U)=\PP\circ U^{-1}$. Remark that for $\mathbb{P}^0$-a.e. $\omega^0\in\Omega^0$, $U(\omega^0, \cdot)$ is a random variable on $(\Omega^1, \mathcal{F}^1, \mathbb{P}^1)$, and we can define the mapping $\mathcal{L}^1(U)$ by 
\begin{equation}\label{def:L1law}
\mathcal{L}^1(U):\omega^0\in\Omega^0\longmapsto \mathcal{L}^1(U)(\omega^0)\coloneqq \calL(U(\omega^0, \cdot)).
\end{equation}
This mapping $\mathcal{L}^1(U)$ is almost surely well-defined\footnote{On the exceptional event where $U(\omega^0, \cdot)$ cannot be computed, we assign $\calL(U(\omega^0, \cdot))$ an arbitrary value in $\mathcal{P}(\RD)$.} under $\PP^0$, forms a $\calP(\RD)$-valued random variable, and provides a conditional law of $U$ given $\calF^0$ (see \cite[Lemma 2.4]{MR3753660}). Moreover, let $\mathbb{M}_{d,q}(\R)$ denote the space of matrices of size $d \times q$, equipped with the operator norm $\vertiii{\cdot}$ defined by $\vertiii{A} := \sup_{z \in \R^q, |z| \le 1} \big|Az\big|$, and  with the following partial order $\preceq$ defined as follows, 
\begin{equation}\label{eq:def_matrix_partial_order}
\forall A, B \in\mathbb{M}_{d,q}(\R),\,\text{ we write } A \preceq B  \text{ if } BB^\top-AA^\top \text{ is a positive semi-definite matrix}.
\end{equation}

For two Brownian motions $B=(B_t)_{t\in[0,T]}$ and $B^0=(B^0_t)_{t\in[0,T]}$ respectively defined on $(\Omega^1,\calF^1, \FF^1, \PP^1)$ and $(\Omega^0, \calF^0, \FF^0, \PP^0)$ and valued in $\RR^q$, we consider two processes $X=(X_t)_{t\in[0,T]}$ and $Y=(Y_t)_{t\in[0,T]}$ respectively defined as the strong solutions of the following two conditional McKean-Vlasov equations with common noise,
\begin{numcases}{}
&$dX_t=b(t, X_t, \mathcal{L}^1(X_t))\d t+\sigma(t, X_t, \mathcal{L}^1(X_t))\d B_t+ \sigma^0 (t,  \mathcal{L}^1(X_t)) \d B^0_t$, \label{eq:defMKV-CN-X}\\
&$dY_t=\,\beta(t, Y_t\,, \mathcal{L}^1(Y_t\,))\d t+\,\theta(t, Y_t\,, \mathcal{L}^1(Y_t\,))\d B_t\,+\,  \theta^0 (t, \mathcal{L}^1(Y_t\,)) \d B^0_t$,\label{eq:defMKV-CN-Y}
\end{numcases}
where $X_0, Y_0$ are $\RD$-valued random variables defined on $(\Omega^1, \calF^1, \PP^1)$ and independent to Brownian motions $B$ and $B^0$, the functions $b,\beta$  and $\sigma, \theta$,  are measurable, defined on $[0,T]\times \RD\times \calP(\RD)$ and respectively valued in $\RD$ and $\mathbb{M}_{d,q}(\R)$,  and the functions $\sigma^0,\theta^0$ are measurable, defined on $[0,T]\times \calP(\RD)$ and  valued in $\mathbb{M}_{d,q}(\R)$.  Further assumptions made on those coefficients below will ensure the well-posedness of~\eqref{eq:defMKV-CN-X} and~\eqref{eq:defMKV-CN-Y}.  

The conditional McKean-Vlasov equation with common noise~\eqref{eq:defMKV-CN-X} is typically understood as the limit of a particle system $(X^{1,N}, ..., X^{N,N})$ as the number of particles $N$ tends to infinity. This particle system is defined by the following dynamics
\begin{equation}\label{eq:def-particle-system-X}
\d X_t^{n,N}=b\big(t,X_t^{n,N}, \mu^N_{t}\big)\d t + \sigma\big(t,X_t^{n,N}, \mu^N_{t}\big)\d B_t^i + \sigma^0\big(t, \mu^N_{t}\big) \d B_t^0, \quad 1\leq n\leq N, 
\end{equation}
where $(X_0^{1, N}, ..., X_0^{N, N})$ are i.i.d. random variables defined on $(\Omega^1, \calF^1, \PP^1)$ having the same distribution as $X_0$ of \eqref{eq:defMKV-CN-X}, and $\mu_t^{N}=\frac{1}{N}\sum_{n=1}^N \delta_{X_t^{n,N}}, \,t\in[0,T]$. 
In~\eqref{eq:def-particle-system-X}, each particle \( X^n \) is driven by its own independent noise \( B^n \) defined on $(\Omega^1, \calF^1, \PP^1)$, while all particles are also influenced by a shared source of randomness \( B^0 \) defined on $(\Omega^0, \calF^0, \PP^0)$, referred to as the \emph{common noise}. The term ``common noise'' originates from this shared dependence and is often used to model systemic or environmental risks that affect all agents in a system, see~\cite{Carmona_Fouque_Sun_2015, Gomes2013, Gueant2011, MR3753660} and the references therein. The conditional propagation of chaos property (see, e.g., \cite[Theorem 2.12]{MR3753660}) establishes the convergence of the empirical measure $\mu_t^N$ associated with the particle system~ 
\eqref{eq:def-particle-system-X} to the conditional law $\mathcal{L}^1(X_t)$ of the solution to \eqref{eq:defMKV-CN-X} with respect to the Wasserstein distance. It also implies the $L^p$-convergence of the trajectory of a single particle $X^{n,N}=(X^{n,N}_t)_{t\in[0,T]}$ to the solution $X=(X_t)_{t\in[0,T]}$ of \eqref{eq:defMKV-CN-X}, when both processes are driven by the same Brownian motions. 
In Subsection~\ref{sec:literature review}, we provide a more detailed review of the literature on McKean--Vlasov equations, both with and without common noise.


\subsection{Aim, key techniques and structure of the paper}\label{subsec:overview}

The main objective of this paper is to establish both the conditional functional convex order in the \( d \)-dimensional setting, as well as the increasing  conditional convex order in the one-dimensional case, for the two processes \( X = (X_t)_{t \in [0,T]} \) and \( Y = (Y_t)_{t \in [0,T]} \) defined by equations~\eqref{eq:defMKV-CN-X} and~\eqref{eq:defMKV-CN-Y} respectively.

More precisely, for a given (non-decreasing) convex functional \( F \) defined on the path space \( \mathcal{C}([0,T], \mathbb{R}^d) \), with $p$-polynomial growth with respect to the sup norm $\Vert \cdot \Vert_{\sup}$,  i.e. such that $|F(\alpha)| \le C (1 + \|\alpha\|_{\sup}^p)$ for some $C > 0$\footnote{In this case, the function $F$ is $\Vert \cdot \Vert_{\sup}$-continuous since it is convex with $\Vert \cdot \Vert_{\sup}$-polynomial growth (see Lemma 2.1.1 in \cite{MR2179578}).}, we aim to establish inequalities of the form
\begin{equation}\label{eq:aim-cond-conv-order}
\mathbb{E} \big[F(X)\,\big|\,B^0\big] \leq \mathbb{E} \big[F(Y)\,\big|\,B^0\big], \quad \hbox{a.s.},
\end{equation}
under suitable assumptions (see further Theorems~\ref{thm:convex_order_cond} and~\ref{thm:increasing_convex_order} for details), 
which directly imply the unconditional versions
\begin{equation}\label{eq:aim-stand-conv-order}
\mathbb{E} \big[F(X)\big] \leq \mathbb{E} \big[F(Y)\big].
\end{equation}
In addition, we extend inequality~\eqref{eq:aim-cond-conv-order} to convex functionals defined jointly on the path space and the space of marginal distributions.

The proof of~\eqref{eq:aim-cond-conv-order} is inspired by the approaches developed in~\cite{Liu2023functional} and~\cite{liu2021monotone} for the study of the convex order of the McKean-Vlasov equation without common noise. Nevertheless, we emphasize that, in the presence of common noise, the main technical challenges not only stem from the additional Brownian motion itself, but also from the presence of this conditional distribution \( \mathcal{L}^1(X_t) \), which introduces  additional  complexity into the analysis. Our proof strategy is to first fix a realization \( \omega^0 \) of the common noise \( B^0 \), and to establish a discrete version of the convex order result by employing the Euler scheme (see further \eqref{eq:EulerX}) as a time-discretized approximation of the McKean--Vlasov equation (see Subsection~\ref{subsec:convex-order-Euler}). We then use the conditional convergence of the Euler scheme to the limiting conditional distribution to derive the desired result~\eqref{eq:aim-cond-conv-order} (see Subsection~\ref{subsec:proof-thm-conv-order}). The proof of the increasing convex order follows the same construction, using a truncated Euler scheme (see further \eqref{eq:EulerTruncatedX}).

Furthermore, complementing the results in~\cite{Liu2023functional} and~\cite{liu2021monotone}, this paper also investigates the convex order properties of the particle system~\eqref{eq:def-particle-system-X}, which serves as an intermediate step toward establishing  the standard convex order result \eqref{eq:aim-stand-conv-order}  through an application of the propagation of chaos property (Corollary \ref{cor:conv_order_standard}).
It is worth noting that \cite[Remark 5.1]{Liu2023functional}, building on results from~\cite{alfonsi2018sampling}, points out that the 
convex ordering among a collection of random variables does not, in general, imply convex ordering between the corresponding empirical measures. While this limitation is well-founded, we are still able to recover in this paper, a convex order-type relation at the level of the particle system, under suitable structural conditions. In particular, it is possible when the dependence on the measure argument is of Vlasov type - that is, when there exists a function \( \mathfrak{b} \) such that
\[
b(t, x, \mu) = \int \mathfrak{b}(t, x, y)\, \mu(\mathrm{d}y),
\]
and similarly for the other coefficient functions. We refer to Proposition~\ref{prop:standard-conv-order-particle-sys-new} for precise conditions and results. This proposition also enables to draw a comparison between two particle systems, one with and one without common noise (Remark \ref{rem:allow-comparer-with-or-not-common-noise}).

The structure of the paper is as follows. We begin in Subsection~\ref{subsec:assum-main-results} by presenting the main assumptions and main results stated in Theorems~\ref{thm:convex_order_cond} and~\ref{thm:increasing_convex_order}. Subsection~\ref{subsec:notations} summarizes the notations  repeatedly  used throughout the paper. Section~\ref{sec:review-comments} provides a concise literature review on McKean–Vlasov equations and convex order, along with comments on the assumptions. 
The technical proofs are presented in the second part of the paper. Section~\ref{sec:proof-cond-conv-order} is dedicated to the proof of Theorem~\ref{thm:convex_order_cond} and includes a discussion of the convex order property for the particle system defined in~\eqref{eq:def-particle-system-X}. Section~\ref{sec:proof-incre-conv-order} addresses the proof of Theorem~\ref{thm:increasing_convex_order}. Detailed proofs of auxiliary results and lemmas are collected in the Appendix.

From an application perspective, convex order naturally arises in fields such as mean-field control and mean-field games, where cost functions are typically convex or concave (see, e.g., \cite{Carmona_Fouque_Sun_2015, Graber_2016, Pham2017}). Our results make it possible to derive bounds for such cost functionals under dynamics governed by conditional McKean–Vlasov equations: examples are given in the last part of the paper, Section~\ref{sec:applications}.

\subsection{Assumptions and main results}\label{subsec:assum-main-results}

In this paper, we rely on the following key assumptions. Assumption~\ref{Ass:AssumptionI} guarantees the existence and uniqueness of strong solutions to the McKean--Vlasov equations with common noise given by ~\eqref{eq:defMKV-CN-X} and~\eqref{eq:defMKV-CN-Y}.  It also ensures the convergence of the Euler scheme, which will be presented later in Proposition~\ref{prop:cvg_Euler_scheme} and in Lemma~\ref{lem: 3eme-tentative}. Assumption~\ref{Ass:AssumptionII} provides the technical conditions required for the conditional functional convex order result stated in Theorem~\ref{thm:convex_order_cond}. 
Furthermore, in the one-dimensional case ($d=1$), we establish an increasing convex order under Assumption~\ref{Ass:Assumption-increasing-cv}, as shown in Theorem~\ref{thm:increasing_convex_order}.

Note that all the assumptions depend on a common index \( p \in [2, +\infty) \). Moreover, when we refer to monotonicity of a function taking values in the matrix space \( \mathbb{M}_{d \times q} \), we interpret it with respect to the partial matrix order defined in~\eqref{eq:def_matrix_partial_order}.

\begin{manualtheorem}{I}\label{Ass:AssumptionI}
The initial random variables $X_0$ and $Y_0$ satisfy $\vertii{X_{0}}_{p}\vee \vertii{Y_{0}}_{p}<+\infty$. 
The functions $b,\,\beta, \sigma, \theta, \sigma^0, \theta^0$ are $\rho\,$-H\"older continuous in $t$, $\rho\in(0,1]$, and Lipschitz continuous in $x$ and in $\mu$ in the following sense \footnote{For convenience, we explain these assumptions only for the coefficient function $b$ but these inequalities are assumed for all coefficient functions $b,\,\beta, \sigma, \theta, \sigma^0, \theta^0$.}: for every $s, t\in[0, T]$ with $s\le t$, there exists $L > 0$ such that
\begin{align}\label{assumpholder}
&\forall\,  x\in\mathbb{R}^{d}, \,\forall\,\mu\in\mathcal{P}_{p}(\mathbb{R}^{d}),\,
 \hspace{0.4cm}\left|b(t, x, \mu)-b(s, x, \mu)\right|\leq L \big(1+\left|x\right|+\mathcal{W}_{p}(\mu, \delta_{0})\big)(t-s)^{\rho}, &
\end{align}
and 
\begin{align}\label{assumplip}
&\forall\,  x,y \in\mathbb{R}^{d},\,\forall\, \, \mu, \nu\in\mathcal{P}_{p}(\mathbb{R}^{d}),\hspace{0.4cm}\left|b(t, x, \mu) - b(t, y, \nu)\right|\leq L\Big(\left|x-y\right|+\mathcal{W}_{p}(\mu, \nu)\Big).
\end{align}
where $\delta_{0}$ denotes the Dirac mass at $0$.
\end{manualtheorem}

\begin{manualtheorem}{II}\label{Ass:AssumptionII} $(1)$ We have $b\equiv\beta$ and the function $b$ is affine in $x$ and constant in $\mu$ w.r.t the convex order. 

\noindent $(2)$ For every fixed $t\in [0,T]$ and $\mu\in\mathcal{P}_{p}(\mathbb{R}^{d})$, the  map  $x\mapsto\sigma(t, x, \mu)$ is convex. 

\noindent$(3)$ For every fixed $(t,x)\in[0,T]\times\RD$, the  map  $\mu\mapsto\sigma(t, x, \mu)$ is non-decreasing with respect to the convex order.

\noindent$(4)$ For every $(t, x, \mu)\in [0,T]\times \mathbb{R}^{d}\times\mathcal{P}_{p}(\mathbb{R}^{d})$, we have $\sigma(t, x, \mu)\preceq \theta(t, x, \mu)$.

\noindent $(5)$ We have $\sigma^0\equiv\theta^0$, and for every fixed $t\in [0,T]$, the  map  $\mu\mapsto\sigma^0(t, \mu)$ is constant in $\mu$ w.r.t the convex order.

\noindent $(6)$ $X_{0}\conright Y_{0}$.

\smallskip 
\end{manualtheorem}

Our first main result establishes the conditional convex order and can be stated as follows. While (a) below is a consequence of (b), we keep the first statement for the reader's convenience.

\begin{thm}[Conditional functional convex order]\label{thm:convex_order_cond}
Assume that Assumptions \ref{Ass:AssumptionI} and  \ref{Ass:AssumptionII} hold for some  $p\!\in [2, +\infty)$. Let $X\coloneqq (X_{t})_{t\in[0, T]}$, $Y\coloneqq (Y_{t})_{t\in[0, T]}$  denote the unique solutions of the conditional McKean-Vlasov equations with common noise~(\ref{eq:defMKV-CN-X}) and~(\ref{eq:defMKV-CN-Y}) respectively.

\noindent $(a)$ {\em Conditional functional convex order.} For every convex function \[F: \big(\mathcal{C}([0, T], \mathbb{R}^{d}), \vertii{\cdot}_{\sup}\big) \rightarrow \mathbb{R}\] with 
$p$-polynomial growth, 
one has 
\begin{equation}\label{convf}
\EE \big[F(X) \,|B^0\big]\leq \EE \big[F(Y) \,|B^0\big] \quad \text{almost surely}. 
\end{equation}

\noindent $(b)$ {\em Extended conditional functional convex order.} For any function \[G: \big(\alpha, (\eta_{t})_{t\in[0, T]}\big)\in\mathcal{C}\big([0, T], \RD\big)\times \mathcal{C}\big([0, T], \mathcal{P}_{p}(\RD)\big)\mapsto G\big(\alpha, (\eta_{t})_{t\in[0, T]}\big)\in \RR\]
satisfying the following conditions:

\begin{enumerate}[$(i)$]
\item $G$ is convex in $\alpha$, 
\item $G$ has a $p$-polynomial growth
in the sense that 
\begin{align}
\nonumber&\exists \,C\in\mathbb{R}_{+}\text{ such that }\forall\, \, \big(\alpha, (\eta_{t})_{t\in[0, T]}\big)\in\mathcal{C}\big([0, T], \RD\big)\times \mathcal{C}\big([0, T], \PPRD\big),  \\
\label{rpolygrowth}&G\big(\alpha, (\eta_{t})_{t\in[0, T]}\big)\leq C\Big[1+\vertii{\alpha}_{\sup}^{p}+\sup_{t\in[0, T]}\mathcal{W}_{p}^{\,p}(\eta_{t}, \delta_{0})\Big],
\end{align}
\item $G$ is continuous in $(\eta_{t})_{t\in[0, T]}$ with respect to the distance $d_{\mathcal{C}}$ defined in~(\ref{distancedc}) and non-decreasing in $(\eta_{t})_{t\in[0, T]}$ with respect to the convex order in the sense that 
\begin{flalign}
&\forall\, \alpha\in \mathcal{C}\big([0, T], \RD\big),\: \forall\, \, (\eta_{t})_{t\in[0, T]}, (\tilde{\eta}_{t})_{t\in[0, T]}\in \mathcal{C}\big([0, T], \PPRD\big) \text{ s.t. } \forall\, \, t \!\in[0, T],\,\eta_{t}\conright\tilde{\eta}_{t},&\nonumber\\
&\hspace{4cm} G\big(\alpha,  (\eta_{t})_{t\in[0, T]}\big)\leq G\big(\alpha,  (\tilde{\eta}_{t})_{t\in[0, T]}\big),&\nonumber
 \end{flalign}
\end{enumerate}

\noindent one has 
\begin{equation}\label{convgpro}
\EE \Big[\,G\big(X, (\mathcal{L}^1(X_t))_{t\in[0, T]}\big)\,\Big| \, B^0\,\Big]\leq \EE \Big[\,G\big(Y, (\mathcal{L}^1(Y_t))_{t\in[0, T]}\big)\,\Big| \, B^0\,\Big] \quad \text{almost surely}.
\end{equation}
\end{thm}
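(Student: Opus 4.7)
Since $(a)$ is the special case $G(\alpha,\eta) \coloneqq F(\alpha)$ of $(b)$, the core task is $(b)$. Following the roadmap of Subsection~\ref{subsec:overview}, the plan is to first freeze a realization $\omega^0 \in \Omega^0$ of the common noise, reducing the problem on $(\Omega^1,\calF^1,\PP^1)$ to a setting where the increments $\Delta B^0_k(\omega^0)$ and the kernels $\mathcal{L}^1(\bar X^M_{t_k})(\omega^0)$ become deterministic; then to establish a discrete conditional convex order for the Euler schemes $\bar X^M, \bar Y^M$ on the uniform grid $t_k = kT/M$, $k \in \{0,\ldots,M\}$; and finally to pass to the limit $M \to \infty$ via the $L^p$-convergence of the Euler scheme (Proposition~\ref{prop:cvg_Euler_scheme}).

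\textbf{Discrete step.} I would prove by induction on $k$ that for $\PP^0$-a.e.~$\omega^0$,
\[
\bigl(\bar X^M_{t_0},\ldots,\bar X^M_{t_k}\bigr)(\omega^0,\cdot) \,\conright\, \bigl(\bar Y^M_{t_0},\ldots,\bar Y^M_{t_k}\bigr)(\omega^0,\cdot) \quad \text{on } (\Omega^1,\calF^1,\PP^1),
\]
with base case supplied by Assumption~\ref{Ass:AssumptionII}(6). For the inductive step, given a convex test function $\psi$ on $(\RD)^{k+2}$, I would integrate first the independent Gaussian increment $\Delta B_k$: by the affine-in-$x$ structure of $b\equiv\beta$, the convexity-in-$x$ of $\sigma$, and the preservation of convexity under Gaussian averaging, the resulting functional remains convex in the past trajectory for each frozen value of the conditional law. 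The matrix comparison $\sigma \preceq \theta$ and the standard Gaussian convex-order inequality would then permit the replacement of $\sigma$ by $\theta$; the inductive hypothesis also yields $\mathcal{L}^1(\bar X^M_{t_k})(\omega^0) \conright \mathcal{L}^1(\bar Y^M_{t_k})(\omega^0)$ as an immediate by-product, so that the non-decreasing-in-$\mu$ property of $\sigma$, combined with the constancy-in-$\mu$ of $b$ and $\sigma^0 \equiv \theta^0$ with respect to the convex order, handles the measure argument; the inductive hypothesis applied to the trajectory up to time $t_k$ then closes the step.

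\textbf{Limit passage and main obstacle.} Extending the path functional $G(\cdot,\eta)$ from the grid values to the piecewise-linear interpolation (an affine operation that preserves convexity in $\alpha$) and combining the joint-path convex ordering with the non-decreasing-in-$\eta$ property of $G$ with respect to the convex order would yield the discrete analogue of~\eqref{convgpro} at the Euler scheme level, for $\PP^0$-a.e.~$\omega^0$. Proposition~\ref{prop:cvg_Euler_scheme} provides $L^p$-convergence of $\bar X^M \to X$ in $\CRD$ (and analogously for $Y$) and $d_{\mathcal{C}}$-convergence of the marginal conditional laws; combined with the $p$-polynomial growth and the resulting $\Vert\cdot\Vert_{\sup}$-continuity of $G$, together with uniform $L^p$-bounds on $\bar X^M$ and $X$, this gives $L^1(\PP)$-convergence of both sides of the discrete inequality, from which~\eqref{convgpro} follows upon a subsequence extraction for the conditional expectations. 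The central technical difficulty is expected to be the inductive step at the joint level: the coefficient $\sigma$ must be replaced simultaneously in its state and its measure arguments, and this is only possible thanks to the precise interplay of Assumption~\ref{Ass:AssumptionII}(1)–(5). In particular, the equality $\sigma^0 \equiv \theta^0$ and the constancy-in-$\mu$ with respect to the convex order of $b$ and $\sigma^0$ are what neutralize the common-noise contribution at each step and make the freezing-then-induction strategy work.
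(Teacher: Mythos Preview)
Your high-level strategy---freeze $\omega^0$, establish a discrete convex order for the Euler schemes, interpolate affinely, and pass to the limit via $L^p$-convergence with a subsequence extraction---is exactly that of the paper. The limit passage you sketch matches Lemmas~\ref{lem: 3eme-tentative}, \ref{Imlemma} and~\ref{lem:cvg-W_p-mu-tilde}, and Proposition~\ref{convschemG} plays the role you assign to the ``discrete analogue of~\eqref{convgpro}''.

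There is, however, a concrete gap in your inductive step. You list the three replacements in the order $\sigma\to\theta$, then $\mu_k^X\to\mu_k^Y$, then the trajectory $\bar X\to\bar Y$ via the inductive convex-order hypothesis. This ordering fails: once $\sigma$ is replaced by $\theta$, the one-step functional is no longer convex in $x_{0:k}$ (Assumption~\ref{Ass:AssumptionII} imposes convexity in $x$ only on $\sigma$, not on $\theta$), so the inductive hypothesis cannot be applied; and after $\sigma\to\theta$ the measure replacement cannot use ``the non-decreasing-in-$\mu$ property of $\sigma$'' either. The only order that works is: first apply the inductive hypothesis on the trajectory (using convexity of the functional built with $\sigma,\mu_k^X$), then replace $\mu_k^X$ by $\mu_k^Y$ pointwise (using monotonicity of $\sigma$ in $\mu$ and constancy of $b,\sigma^0$), and only last replace $\sigma$ by $\theta$ via Lemma~\ref{lem:Euler_kernel_order}(iii). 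This is precisely the order in the paper's proof of Proposition~\ref{prop: funconvexEuler-cond}.

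The paper organizes the discrete step differently, via a \emph{backward} dynamic-programming construction of operators $\Phi_m^{\omega^0},\Psi_m^{\omega^0}$ (equations~\eqref{eq:defPhi1}--\eqref{eq:defPsi1}): one first proves separately that each $\Phi_m^{\omega^0}$ is convex in $x_{0:m}$ and non-decreasing in $\mu_{m:M}$ (Lemma~\ref{lem:property_Phi_m}), and that $\Phi_m^{\omega^0}\le\Psi_m^{\omega^0}$, then chains the three inequalities at $m=0$. Your forward scheme is legitimate once reordered, and has the minor advantage that the marginal order $\mathcal{L}^1(\bar X_{t_k})\conright\mathcal{L}^1(\bar Y_{t_k})$ falls out as a by-product rather than requiring a separate Proposition~\ref{prop:marginalEuler}; the backward scheme, on the other hand, cleanly decouples the structural properties of $\Phi_m$ from the final comparison and makes the role of each hypothesis in Assumption~\ref{Ass:AssumptionII} more transparent.
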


\begin{manualtheorem}{III}\label{Ass:Assumption-increasing-cv} $(1)$ $d=q=1$. For every fixed  $(t,\mu)\in [0,T]\times \mathcal{P}_p(\R)$, the coefficient functions $x\mapsto b(t, x, \mu)$ and $x\mapsto \sigma(t, x, \mu)$ are convex.

\noindent $(2)$ For every fixed $(t,x)\in[0,T]\times\R$, the functions $\mu\mapsto b(t, x, \mu)$ and $\mu\mapsto\sigma(t, x, \mu)$ are non-decreasing with respect to the increasing convex order. 

\noindent $(3)$ We have $\sigma^0\equiv \theta^0$,  and for every fixed $t\in [0,T]$, the function $\mu\mapsto\sigma^0(t, \mu)$ is constant in $\mu$ w.r.t the increasing convex order.

\noindent$(4)$  For every $(t, x, \mu)\in [0,T]\times \R\times\mathcal{P}_{p}(\R)$, we have \[  b(t, x, \mu) \leq \beta(t, x, \mu) \quad \text{and} \quad  0\leq \sigma(t, x, \mu) \leq \theta(t, x, \mu).\] 
\noindent $(5)$ $X_{0}\iconright Y_{0}$.

\end{manualtheorem}

The result concerning the increasing convex order is stated in the following theorem.

\begin{thm}\label{thm:increasing_convex_order}
Assume that Assumptions \ref{Ass:AssumptionI} and  \ref{Ass:Assumption-increasing-cv} hold for some  $p\!\in [2, +\infty)$. Let $X\coloneqq (X_{t})_{t\in[0, T]}$, $Y\coloneqq (Y_{t})_{t\in[0, T]}$  denote the unique solutions of the conditional McKean-Vlasov equations with common noise~(\ref{eq:defMKV-CN-X}) and~(\ref{eq:defMKV-CN-Y}) respectively. Then, for any non-decreasing  convex function $F: \big(\mathcal{C}([0, T], \mathbb{R}^{d}), \vertii{\cdot}_{\sup}\big) \rightarrow \mathbb{R}$ with 
$r$-polynomial growth for some $r\in [1,p)$, 
one has 
\begin{equation}\label{iconvf}
\EE \big[F(X) \,|B^0\big]\leq \EE \big[F(Y) \,|B^0\big] \quad \text{almost surely}. 
\end{equation} 
\end{thm}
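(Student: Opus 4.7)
The plan is to follow the Euler-scheme strategy employed for Theorem~\ref{thm:convex_order_cond}, but replacing the standard Euler scheme by the truncated one~\eqref{eq:EulerTruncatedX} and adapting the ideas of~\cite{liu2021monotone} (where the increasing convex order was established for McKean--Vlasov equations without common noise) to the common-noise setting. The truncation keeps all iterates uniformly bounded, so the non-decreasing convex inequalities can be manipulated rigorously, while the gap $r<p$ between the growth of $F$ and the integrability threshold of Assumption~\ref{Ass:AssumptionI} provides the uniform integrability needed to remove the truncation at the end.

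The core of the argument is a discrete induction on the Euler grid $t_k=kT/m$. Working on a full $\PP^0$-measure set of realizations $\omega^0$ of the common noise, I aim to establish jointly, for every $k\in\{0,\ldots,m\}$, the two properties $(\mathrm{A})$ $\bar X^{M}_{t_k}(\omega^0,\cdot)\iconright \bar Y^{M}_{t_k}(\omega^0,\cdot)$ on $(\Omega^1,\calF^1,\PP^1)$, and $(\mathrm{B})$ $\mathcal{L}^1(\bar X^{M}_{t_k})(\omega^0)\iconright \mathcal{L}^1(\bar Y^{M}_{t_k})(\omega^0)$. Property~$(\mathrm{B})$ is needed to invoke the icv-monotonicity of $b,\sigma$ in $\mu$ from Assumption~\ref{Ass:Assumption-increasing-cv}(2) at the next Euler step; the base case follows from Assumption~\ref{Ass:Assumption-increasing-cv}(5) and the independence of $X_0,Y_0$ from $\calF^0$.

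For the inductive step, I fix a bounded non-decreasing convex test function $\varphi$, condition on $\calF_{t_k}\vee\calF^0$, and use the independence of the idiosyncratic increment $\Delta B_k$ from $\calF_{t_k}\vee\calF^0$ to write
\begin{equation*}
\mathbb{E}\!\left[\varphi\bigl(\bar X^{M}_{t_{k+1}}\bigr)\,\big|\,\calF_{t_k}\vee\calF^0\right]
= \Psi\bigl(\bar X^{M}_{t_k},\,b(t_k,\bar X^{M}_{t_k},\bar\mu^{X,M}_{k}),\,\sigma(t_k,\bar X^{M}_{t_k},\bar\mu^{X,M}_{k}),\,\omega^0\bigr),
\end{equation*}
where $\Psi(x,u,v,\omega^0):=\mathbb{E}[\varphi(x+uh+v\sqrt{h}\,Z+\sigma^0(t_k)\Delta B^0_k(\omega^0))]$ with $Z\sim\mathcal{N}(0,1)$, $h=T/m$, and where Assumption~\ref{Ass:Assumption-increasing-cv}(3) ensures that the common-noise contribution is the same additive constant in the analogous identity for $Y$ once $\omega^0$ is frozen. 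The key elementary lemma is that $\Psi(\cdot,\cdot,\cdot,\omega^0)$ is jointly convex in $(x,u,v)\in\R^3$ and non-decreasing in each argument on the half-space $\{v\geq 0\}$; the monotonicity in $v$ relies on the convexity of $\varphi$ and the symmetry of $Z$. I then perform the comparison in two stages: $(\mathrm{i})$ use~$(\mathrm{B})$ together with Assumptions~\ref{Ass:Assumption-increasing-cv}(2) and~(4) to substitute $(b,\sigma)$ evaluated at $\bar\mu^{X,M}_{k}$ by $(\beta,\theta)$ evaluated at $\bar\mu^{Y,M}_{k}$ without decreasing the expression; $(\mathrm{ii})$ apply the inductive hypothesis~$(\mathrm{A})$ to the test function $x\mapsto \Psi(x,\beta(t_k,x,\bar\mu^{Y,M}_{k}),\theta(t_k,x,\bar\mu^{Y,M}_{k}),\omega^0)$.

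For the path-space statement~\eqref{iconvf}, I plan to reduce by approximation to cylindrical non-decreasing convex functionals $F(\alpha)=f(\alpha_{t_0},\ldots,\alpha_{t_m})$ with $f$ non-decreasing convex on $\R^{m+1}$, and propagate the order backward through the Euler dynamic-programming recursion using the inductive comparison above at each step. Finally, the conditional $L^p$-convergence of the truncated Euler scheme ensured by Proposition~\ref{prop:cvg_Euler_scheme} and Lemma~\ref{lem: 3eme-tentative}, together with the $r$-polynomial growth of $F$ with $r<p$, delivers the uniform integrability needed to pass to the limit $m\to\infty$ followed by $M\to\infty$. The main obstacle I expect is step~$(\mathrm{ii})$: verifying that the composed test function is non-decreasing convex in $x$ when $b,\sigma$ are only assumed convex (rather than non-decreasing convex) in $x$ requires a careful exploitation of the componentwise monotonicity of $\Psi$ on $\{v\geq 0\}$, the non-negativity of $\sigma$ from Assumption~\ref{Ass:Assumption-increasing-cv}(4), and the specific structure of the truncation in~\eqref{eq:EulerTruncatedX}, following the argument of~\cite{liu2021monotone}.
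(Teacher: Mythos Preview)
Your proposal follows the same route as the paper (truncated Euler scheme, marginal increasing convex order by forward induction, functional order via backward dynamic programming with operators $\Phi_m^{\omega^0,h}$, then a single limit $M\to\infty$ through the interpolators), and you correctly pinpoint the central obstacle in step~$(\mathrm{ii})$. Two points need correcting, however. First, the truncation in~\eqref{eq:EulerTruncatedX} does \emph{not} bound the iterates: it truncates the Gaussian increment to $|Z_{m+1}^h|\le c_{h,\sigma}=(2\sqrt h\,[\sigma]_{\mathrm{Lip}_x})^{-1}$, and its sole purpose is precisely to resolve the obstacle you describe---with this truncation and $h<1/(2[b]_{\mathrm{Lip}_x})$, the one-step map $x\mapsto x+hb(t_m,x,\mu)+\sqrt h\,\sigma(t_m,x,\mu)Z_{m+1}^h$ is non-decreasing in $x$ for \emph{every} realization of $Z_{m+1}^h$, which is exactly what makes the composed test function in $(\mathrm{ii})$ non-decreasing and convex (Lemma~\ref{lem:icv-marginal-Euler}); accordingly your $\Psi$ must be built with $Z^h$, not with an untruncated $Z\sim\mathcal N(0,1)$. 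Second, there is only one discretization parameter: the truncation threshold is tied to $h=T/M$ and disappears automatically as $M\to\infty$, so there is no separate ``$m\to\infty$ then $M\to\infty$'' limit; the relevant convergence is Proposition~\ref{prop:cvg_truncated_Euler_scheme} (truncated scheme, in $L^r$ for $r<p$), not Proposition~\ref{prop:cvg_Euler_scheme} or Lemma~\ref{lem: 3eme-tentative}, which treat the standard scheme.
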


\begin{rem}\label{rem:G-and-symmetric-setting} We can establish the following two results, by applying reasoning similar to that of Theorem \ref{thm:convex_order_cond}; for brevity, we omit the detailed proofs.
\begin{enumerate}[(i)]
    \item Let $(\alpha, (\eta_t)_{t\in[0,T]})\mapsto G(\alpha, (\eta_t)_{t\in[0,T]})$ be a continuous functional defined on the space \, \, $\mathcal{C}\big([0, T], \R\big)\times \mathcal{C}\big([0, T], \mathcal{P}_{p}(\R)\big)$, where $G$ is convex and non-decreasing in $\alpha$ (see further \eqref{eq:pointwise_partial_order}), has at most $r$-polynomial growth, $r\in[1,p)$, and is non-decreasing in $(\eta_t)_{t\in[0,T]}$ with respect to the increasing convex order. Then, 
\[\EE \Big[\,G\big(X, (\mathcal{L}^1(X_t))_{t\in[0, T]}\big)\,\Big| \, B^0\,\Big]\leq \EE \Big[\,G\big(Y, (\mathcal{L}^1(Y_t))_{t\in[0, T]}\big)\,\Big| \, B^0\,\Big] \quad \text{almost surely}.\]

\item  (Symmetric setting). Instead of Assumption \ref{Ass:AssumptionII}-(4) and (6), suppose the following alternative conditions hold:
\begin{itemize}
    \item[(4')] For every $(t, x, \mu)\in [0,T]\times \mathbb{R}^{d}\times\mathcal{P}_{p}(\mathbb{R}^{d})$, we have $\sigma(t, x, \mu)\succeq \theta(t, x, \mu)$.

    \item[(6')]$X_{0}\conleft Y_{0}$.
\end{itemize} 
Then we have, almost surely, $\mathbb{E} \big[\, F(X) \,\big|\, B^0 \,\big] \geq \mathbb{E} \big[\, F(Y) \,\big|\, B^0 \,\big]$ and
\[\mathbb{E} \left[\, G\left(X, (\mathcal{L}^1(X_t))_{t\in[0,T]}\right) \,\middle|\, B^0 \,\right]
\geq
\mathbb{E} \left[\, G\left(Y, (\mathcal{L}^1(Y_t))_{t\in[0,T]}\right) \,\middle|\, B^0 \,\right],
\]
for any functionals \( F \) and \( G \) satisfying the conditions of Theorem \ref{thm:convex_order_cond}.  
A similar symmetric result can also be established for the increasing convex order.
\end{enumerate}
\end{rem}

\subsection{Notations}\label{subsec:notations} 
\begin{supertabular}{ l l }
$|\cdot|$ & Canonical Euclidean norm on $\RD$; \\
$\vertiii{\cdot}$ & The operator norm of a matrix, defined by $\vertiii{A} := \sup_{z \in \R^q, |z| \le 1} \big|Az\big|$;\\
$\conright$, $\iconright$ & Convex order and increasing convex order, see \eqref{eq:defconv} and \eqref{defconvmeasure};  \\ 
 $(S, d_S)$ &  A Polish space $S$ equipped with the distance $d_S$;\\
$\mathcal{P}(S)$ & The set of all probability measures on $S$;  \\
$\mathcal{P}_p(S)$&The set of probability measures in $\mathcal{P}(S)$ having $p$-th finite moments;\\
$\mathcal{W}_p$& Wasserstein distance, see \cite[Definition 6.1]{villani2008optimal};\\
$\mathcal{C}([0,T], S)$&The set of continuous mappings $t\mapsto (\alpha_t)_{t\in[0,T]}$ defined on  $[0,T]$ and  \\
& valued in $S$;\\
$d_{\mathcal{C}}$&The distance between two marginal distributions, see \eqref{distancedc};\\
$\mathcal{L}^1(X)(\omega^0)$&The law of $X(\omega^0, \cdot)$, see \eqref{def:L1law};\\
($\mathbb{M}_{d,q}, \vertiii{\cdot}$)&The space of matrices of size $d \times q$, equipped with the operator norm $\vertiii{\cdot}$\\
& defined by $\vertiii{A} := \sup_{z \in \R^q, |z| \le 1} \big|Az\big|$;\\
$A \preceq B $&Partial order between two matrices, see \eqref{eq:def_matrix_partial_order};\\
$\vertii{X}_p$& $L^p$-norm of a random variable $X$;\\
 $Q^{\,\omega^0}_{m+1}$,$\Phi_m^{\,\omega^0}$, $\Psi_m^{\,\omega^0}$ & Operators respectively defined by \eqref{eq:def-operator-euler}, \eqref{eq:defPhi1}-\eqref{eq:defPhi2} and \eqref{eq:defPsi1};\\
 $i_M$, $I_M$ & Interpolators from Definition \ref{def:interpolator} with extensions given in \eqref{eq:def-im-extension};\\
$x_{m_1 : m_2}$& The vector $(x_{m_{1}}, x_{m_{1}+1}, \ldots, x_{m_{2}})\in(\RD)^{m_{2}-m_{1}+1}$;\\
$\mu_{m_1 : m_2}$& The probability measures $(\mu_{m_{1}}, \ldots, \mu_{m_{2}})\in\big(\mathcal{P}_{p}(\RD)\big)^{m_{2}-m_{1}+1}$;\\
$C_{p_1,\ldots,p_n}$& A constant depending on the parameters $p_1,\ldots,p_n$;\\
 $[f]_{\text{Lip}_x}$ \dd & The Lipschitz constant of a function $f$ with respect to the variable $x$. \\
\end{supertabular}


\section{Literature review and comments on the assumptions}\label{sec:review-comments}
\subsection{Literature review}\label{sec:literature review}

The standard McKean-Vlasov equation, first introduced by McKean in \cite{mckean1967propagation}, represents a  family  of stochastic differential equations in which the dynamics of a process $X$ at a given time $t$ are driven by coefficients depending not only on the position of the process $X_t$ but also on its probability distribution $\mu_t$. This stochastic framework is naturally linked to a class of non-linear partial differential equations (see e.g.~\cite{lacker2018mean}). The distribution-dependent feature of the McKean-Vlasov equation has become a fundamental tool for modeling various phenomena in domains such as statistical physics (see, for instance, \cite{martzel2001mean}, \cite{bossy2021instantaneous}), mathematical biology \cite{MR2974499, bossy2015clarification}, social sciences and quantitative finance, often driven by advancements in mean field games and interacting diffusion models (see e.g. \cite{lasry2018mean}, \cite{cardaliaguet2018mean} and \cite{MR3752669}). In these contexts, the focus frequently shifts to $N$-particle systems, where the dynamics of each particle are influenced by its own position, the empirical distribution of all particles, and an independent noise term modeled by i.i.d. Brownian motions. The McKean–Vlasov equation serves as the limit of such a particle system as the number of particles $N$ approaches infinity. The derivation of this limiting behavior is often referred to as proving the \textit{propagation of chaos} in the literature (see e.g. \cite{sznitman1991topics}). The introduction of common noise to this framework is a natural extension for modeling systems influenced by external random factors that impact all particles simultaneously (see, e.g. \cite{Carmona2016common-noise} and \cite{MR3753660}). In social sciences and finance, the McKean–Vlasov equation with common noise is often used to model scenarios where external shocks influence collective decision-making or market dynamics. For example, common noise can represent macroeconomic factors affecting agents in financial markets~\cite{Burzoni2023, Carmona_Fouque_Sun_2015, ren2024risk}.

Convex order has found wide-ranging applications in probability, statistics, mathematical finance, and insurance (see, e.g., \cite{Guyon02102020}, \cite{CHEUNG2015409}, \cite{DENUIT2012265}, \cite{alfonsi2018sampling}, \cite{jourdain2022convex}, \cite{MR4171389}, \cite{MR4388921} among many others). Foundational references on convex order and other stochastic orders include \cite{shaked2007stochastic} and \cite{shaked1994orderappli}. Among the many significant results in convex order theory,  one key contribution is Strassen's coupling theorem, which states that two random vectors are ordered in the convex (or increasing convex) order if and only if they admit a martingale (or submartingale) coupling (see, e.g., \cite[Theorem 8]{Strassen1965}, \cite{Cartier1964}, \cite{GozlanJuillet2020}), and its conditional version can be found in \cite[Theorem 1.4]{MR3648045}. Another notable concept in convex order theory is the notion of the ``peacock process'', an acronym for the french expression ``Processus Croissant pour l'Ordre Convexe (PCOC)'', which means ``increasing process with respect to the convex order'', and refers to an integrable process \( t \mapsto X_t \) that is increasing with respect to the convex order (see e.g. \cite{Madan2002}, \cite{Hirsch2011peacocks}).  Kellerer's theorem (see e.g. \cite{Kellerer1972}, \cite{Kellerer1973}) states that $X=(X_t)_{t\geq0}$ is a peacock if and only if there exists a martingale $(M_t)_{t\geq0}$ such that they have the same marginal distributions. The connection between convex order and stochastic processes was further explored through the study of functional convex order for diffusion processes of the form 
\[
\mathrm{d} X_t = b(t, X_t) \mathrm{d} t + \sigma(t, X_t) \mathrm{d} B_t,
\]
as discussed in \cite{pages2016convex}, \cite{alfonsi2018sampling}, and \cite{jourdain2022convex}, among others. More recently, these results have been generalized to the standard McKean-Vlasov equation (without common noise) in \cite{Liu2023functional} and \cite{liu2021monotone}. This paper builds on the work of \cite{Liu2023functional} and \cite{liu2021monotone} by incorporating the common noise setting, thus providing a broader framework for studying convex order in McKean-Vlasov equations.

\subsection{Comments on the assumptions}\label{sec:comments-assump}

Assumption \ref{Ass:AssumptionI} ensures the existence and uniqueness of a strong solution \( X = (X_t)_{t \in [0, T]} \) to the McKean-Vlasov equation with common noise~\eqref{eq:defMKV-CN-X},  and similarly of a strong solution \( Y = (Y_t)_{t \in [0,T]} \) to~\eqref{eq:defMKV-CN-Y}. This solution further satisfies the bound: 
\[
\Big\Vert \sup_{t \in [0, T]} |X_t| \Big\Vert_p \leq C_{p, T, L, b, \sigma, \sigma^0} \left(1 + \left\Vert X_0 \right\Vert_p \right).
\]  
A similar bound holds for the norm of $Y$. 
For the case \( p = 2 \), a detailed proof is provided in \cite[Proposition 2.8]{MR3753660}. When \( p > 2 \), the proof follows a similar methodology with only minor differences.   Notice that our assumptions do not prevent $\sigma, \theta$ from being degenerated. As a result, our findings also encompass the case of two mean-field underdamped Langevin dynamics:
\begin{align}\label{eq:Langevin}
    \left\{
        \begin{array}{ll}
        &\d X_t = V_t \, \d t \\
        &\d V_t = \tilde b(t,Z_t,\mathcal{L}^1(Z_t)) \, \d t + \tilde \sigma (t,Z_t,\mathcal{L}^1(Z_t)) \, \d B_t + \tilde \sigma^0 (t,Z_t, \mathcal{L}^1(Z_t)) \,  \d B^0_t, \\
        & Z_t = (X_t, V_t), t \in [0,T]\\
        \\
        &\d Y_t = W_t \, \d t \\
        &\d W_t = \tilde \beta(t,R_t, \mathcal{L}^1(R_t)) \, \d t + \tilde \sigma (t,R_t, \mathcal{L}^1(R_t)) \, \d B_t + \tilde \theta^0(t,R_t, \mathcal{L}^1(R_t)) \, \d B^0_t, \\
        & R_t = (Y_t, W_t), t \in [0,T]
        \end{array}
        \right. 
    \end{align} 
    where $(B_t)_{t \ge 0}$, $(B^0_t)_{t \ge 0}$ are two independent Brownian motions, with obvious adaptations of our Assumptions to the coefficients $\tilde b, \tilde \sigma, \tilde \sigma^0, \tilde \beta, \tilde \theta$ and $\tilde \theta^0$ and to the initial data $(X_0, V_0)$ and $(Y_0, W_0)$. Kinetic systems of the form~\eqref{eq:Langevin} and the associated particle systems are studied in the theory of mean-field games (see~\cite{Ambrose_Griffin_2024} and the references therein), for the modeling of flocking phenomena (see~\cite{Chaintron_2022} for an introduction and complementary references) and for the simulation of molecular dynamics (see the review \cite{Lelievre2016} and the references therein).

Additionally, the probability distribution \(\mathcal{L}^1(X)\) in \eqref{eq:defMKV-CN-X} provides a version of the conditional law of \(X\) given \(B^0\) (see \cite[Proposition 2.9 and Remark 2.10]{MR3753660})\footnote{While Proposition 2.9 in \cite{MR3753660} is formally established for \(X_t\), \(t \in [0, T]\), the proof extends naturally to the entire path \(X\) as a random variable valued in \(\mathcal{C}([0, T], \mathbb{R}^d)\).}. Since the discrepancies occur only on a negligible set, we adopt the following convention: 

\noindent\textbf{Convention.} \label{convention} \textit{$(1)$ With slight abuse of notation, whenever \(X = (X_t)_{t \in [0, T]}\) is the solution of the McKean-Vlasov equation \eqref{eq:defMKV-CN-X} (similarly for \(Y = (Y_t)_{t \in [0, T]}\) defined by \eqref{eq:defMKV-CN-Y}), we will use \(\mathcal{L}^1(X)\) to refer to both the random probability distribution defined by \eqref{def:L1law} and the conditional distribution of \(X\) given \(B^0\). A similar convention will be applied to \(\mathcal{L}^1(\bar{X})\), \(\mathcal{L}^1(\bar{Y})\), \(\mathcal{L}^1(\tilde{X})\) and \(\mathcal{L}^1(\tilde{Y})\), where \(\bar{X} = (\bar{X}_t)_{t \in [0, T]}\) and \(\bar{Y} = (\bar{Y}_t)_{t \in [0, T]}\) are further defined by the Euler schemes \eqref{eq:continuous_EulerX} and \eqref{eq:continuous_EulerY} and where $\tilde X = (\tilde X_t)_{t \in [0,T]}$ and $\tilde Y = (\tilde Y_t)_{t \in [0,T]}$ are defined by the truncated Euler schemes~\eqref{eq:EulerTruncatedX} and~\eqref{eq:EulerTruncatedY}, respectively.}

\noindent \textit{$(2)$ As previously discussed in Section \ref{subsec:mkv-common-noise}, the notation $\mathcal{L}^1(X)$ for a random variable $X$ defined on $(\Omega, \mathcal{F}, \PP)$ is $\PP^0$-almost surely well-defined because $\mathcal{F}$ is the completion of $\mathcal{F}^0\otimes\mathcal{F}^1$. However, to simplify the discussion that follows, we will ignore the set of $\omega^0$ where $\mathcal{L}^1(X)$ is not well-defined, assigning it an arbitrary fixed value when necessary.} 

Next, we present some examples of functions satisfying the conditions in Assumption~\ref{Ass:AssumptionII} and Assumption~\ref{Ass:Assumption-increasing-cv}. First, note that if two random variables $U$ and $V$ have finite first moments and satisfy $U \conright V$, then $\mathbb{E}[U] = \mathbb{E}[V]$, since the functions $\varphi(x) = \pm x$ are convex. Consequently, a classical example of a functional of $\mu$ that is constant with respect to the convex order is a functional of the form $\mu \mapsto f\left(\int \xi \mu(\d\xi) \right)$. Similarly, a classical example of a functional of $\mu$ that is increasing with respect to the (increasing) convex order is a functional of the form $\mu \mapsto \int g(\xi) \mu(\d\xi)$, where $g$ is a (non-decreasing) convex function.  

Finally, Assumption~\ref{Ass:AssumptionII} and Assumption~\ref{Ass:Assumption-increasing-cv} prevent  modifying $\sigma^0$ in front of the common noise term when establishing the conditional convex order result~\eqref{convf}. This constraint can be relaxed when establishing the standard convex order result \eqref{eq:aim-stand-conv-order} (see Assumption~\ref{Ass:Assumption-standard-cv-new} and Corollary~\ref{cor:conv_order_standard} in Subsection \ref{sec: conv-particle-sys}).

\section{Functional convex order for d-dimensional McKean-Vlasov equation with common noise}\label{sec:proof-cond-conv-order}

This section details the proof of the convex ordering results for the $d$-dimensional McKean–Vlasov equation with common noise, and is divided into two parts. Subsection~\ref{subsec:cond-cvx-dimd} focuses on the proof of Theorem~\ref{thm:convex_order_cond}, while Subsection~\ref{sec: conv-particle-sys} establishes a convex ordering result for the particle system, along with the standard convex ordering result under a different set of assumptions (see Assumption~\ref{Ass:Assumption-standard-cv-new}). This alternative framework allows for the comparison between a particle system with common noise and one without common noise.  

\subsection{Conditional convex order for the McKean-Vlasov equation with common noise}\label{subsec:cond-cvx-dimd}
The proof of Theorem \ref{thm:convex_order_cond} follows a similar strategy to that used in \cite{Liu2023functional}. Specifically, we begin by defining the Euler schemes for \eqref{eq:defMKV-CN-X} and \eqref{eq:defMKV-CN-Y}, which serve as discrete intermediates for establishing the convex order in Theorem \ref{thm:convex_order_cond}.
Let $M\in\mathbb{N}^{*}$ and let $ h= \frac{T}{M}$. For $m=0, \ldots, M$, we define $t_{m}^{M}\coloneqq h\cdot m=\frac{T}{M}\cdot m$. When there is no ambiguity, we write $t_{m}$ instead of $t_{m}^{M}$. Let  $Z_{m+1}\coloneqq\frac{1}{\sqrt{h}}(B_{t_{m+1}}-B_{t_{m}})$ and $Z_{m+1}^0\coloneqq\frac{1}{\sqrt{h}}(B^0_{t_{m+1}}-B^0_{t_{m}}),  \,m=1, \ldots, M,$ be i.i.d. random vectors having probability distribution $\mathcal{N}(0, \mathbf{I}_{q})$. The Euler schemes of  Equations~\eqref{eq:defMKV-CN-X} and \eqref{eq:defMKV-CN-Y} are defined by $\bar{X}^M_{0}=X_{0},\,\bar{Y}^M_{0}=Y_{0},$ and, recalling the definition of $\mathcal{L}^1$ from \eqref{def:L1law} and its convention from Subsection~\ref{convention}, 
\begin{align}
&\bar{X}_{t_{m+1}}^{M}\!\!=\bar{X}_{t_{m}}^{M}+h\, b(t_{m}, \bar{X}_{t_{m}}^{M}, \mathcal{L}^1(\bar{X}_{t_{m}}^{M}))+\sqrt{ h\,}\,\sigma(t_{m}, \bar{X}_{t_{m}}^{M}, \mathcal{L}^1(\bar{X}_{t_{m}}^{M}))Z_{m+1}\nonumber\\
&\qquad \qquad +\sqrt{h}\,\sigma^0(t_{m}, \mathcal{L}^1(\bar{X}_{t_{m}}^{M}))\,Z_{m+1}^0,\label{eq:EulerX}\\
\label{eq:EulerY}
&\bar{Y}_{t_{m+1}}^{M}\!=\,\bar{Y}_{t_{m}}^{M}+ h\, b(t_{m}, \bar{Y}_{t_{m}}^{M}, \,\mathcal{L}^1(\bar{Y}_{t_{m}}^{M}))+\sqrt{ h\,}\,\,\theta(t_{m}, \bar{Y}_{t_{m}}^{M}, \,\mathcal{L}^1(\bar{Y}_{t_{m}}^{M}))Z_{m+1}\nonumber\\
&\qquad \qquad +\sqrt{h}\,\sigma^0(t_{m}, \mathcal{L}^1(\bar{Y}_{t_{m}}^{M}))\,Z_{m+1}^0, \: \; 
\end{align}
with their continuous extensions $\bar{X}^M=(\bar{X}_{t}^{M})_{t\in[0, T]}$, $\bar{Y}^M=(\bar{Y}_{t}^{M})_{t\in[0, T]}$ defined as follows: for every $t\in[t_{m}, t_{m+1})$, 
\begin{align}\label{eq:continuous_EulerX}
&\bar{X}_{t}^{M}\!\!=\bar{X}_{t_{m}}^{M}+(t-t_m)\, b(t_{m}, \bar{X}_{t_{m}}^{M}, \mathcal{L}^1(\bar{X}_{t_{m}}^{M}))+\sigma(t_{m}, \bar{X}_{t_{m}}^{M}, \mathcal{L}^1(\bar{X}_{t_{m}}^{M}))(B_{t}-B_{t_{m}})\nonumber\\
&\hspace{2cm}+\sigma^0(t_{m}, \mathcal{L}^1(\bar{X}_{t_{m}}^{M}))\,(B^0_{t}-B^0_{t_{m}}), \\
\label{eq:continuous_EulerY}
&\bar{Y}_{t}^{M}\!=\,\bar{Y}_{t_{m}}^{M}+ (t-t_m)\, b(t_{m}, \bar{Y}_{t_{m}}^{M}, \,\mathcal{L}^1(\bar{Y}_{t_{m}}^{M}))+\,\theta(t_{m}, \bar{Y}_{t_{m}}^{M}, \,\mathcal{L}^1(\bar{Y}_{t_{m}}^{M}))(B_{t}-B_{t_{m}})\nonumber\\
&\hspace{2cm}+\sigma^0(t_{m}, \mathcal{L}^1(\bar{Y}_{t_{m}}^{M}))\,(B^0_{t}-B^0_{t_{m}}).
\end{align}
When there is no ambiguity, we write $\bar{X}_{m}$ and $\bar{X}_{t}$ instead of $\bar{X}^{M}_{t_{m}}$ and $\bar{X}^{M}_{t}$ to simplify the notation.

An intermediate step in proving Theorem \ref{thm:convex_order_cond} is establishing the following proposition, the proof of which is the primary focus of Subsection \ref{subsec:convex-order-Euler}. 

\begin{prop}\label{prop: funconvexEuler-cond}
Assume that Assumptions~\ref{Ass:AssumptionI} and \ref{Ass:AssumptionII} hold with some $p\in[2, \infty)$.  Let $M \in \mathbb{N}^*$.  For any convex function $F:(\mathbb{R}^{d})^{M+1}\rightarrow \RR$ with  $p$-polynomial growth in the sense that 
\begin{equation}\label{rpolygrowth2}
\exists\, C>0 \text{ such that } \forall\, \, x=(x_{0}, \ldots, x_{M})\in(\RD)^{M+1},\;  \; \left|F(x)\right|\leq C\big(1+\sup_{0\leq i\leq M}\left|x_{i}\right|^{p}\big),
\end{equation}
 we have
\[\forall\,\omega^0\in\Omega^0, \quad \EE^1\Big[ F(\bar{X}_{0}(\omega^0, \cdot), \ldots, \bar{X}_{M}(\omega^0, \cdot))\Big]\leq \EE^1\Big[ F(\bar{Y}_{0}(\omega^0, \cdot), \ldots, \bar{Y}_{M}(\omega^0, \cdot))\Big].\]
\end{prop}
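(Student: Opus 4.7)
\textbf{Proof plan for Proposition \ref{prop: funconvexEuler-cond}.}

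The plan is to fix an arbitrary $\omega^0 \in \Omega^0$ and work on the space $(\Omega^1, \mathcal{F}^1, \mathbb{P}^1)$, where the common noise increments $Z^0_{m+1}(\omega^0)$ become deterministic constants and the conditional laws $\mathcal{L}^1(\bar X_{t_m})(\omega^0)$, $\mathcal{L}^1(\bar Y_{t_m})(\omega^0)$ become fixed elements of $\mathcal{P}_p(\mathbb{R}^d)$. Thus, for fixed $\omega^0$, the Euler schemes~\eqref{eq:EulerX}--\eqref{eq:EulerY} become ordinary $M$-step discrete recursions with coefficients that depend on $\omega^0$ only through these deterministic data. The goal is then to emulate the discrete functional convex order argument of \cite{Liu2023functional} on $(\Omega^1, \mathcal{F}^1, \mathbb{P}^1)$, handling the $\omega^0$-dependence parametrically.

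First, I would run a forward induction on $m$ to show the auxiliary marginal statement
\begin{equation*}
\mathcal{L}^1(\bar X_{t_m})(\omega^0) \,\conright\, \mathcal{L}^1(\bar Y_{t_m})(\omega^0) \qquad \text{for every } m=0,\ldots,M.
\end{equation*}
The base case is Assumption \ref{Ass:AssumptionII}(6). For the inductive step, one couples $\bar X_{t_m}(\omega^0,\cdot)$ and $\bar Y_{t_m}(\omega^0,\cdot)$ via a Strassen-type martingale coupling (using the conditional Strassen theorem cited in the introduction), then plugs into the one-step recursion. The affine-in-$x$ and convex-order-constant-in-$\mu$ property of $b=\beta$ (Assumption \ref{Ass:AssumptionII}(1)) ensures the drift contributions match up after taking conditional expectation over the coupling; the convexity of $x\mapsto\sigma(t_m,x,\cdot)$ combined with the monotonicity of $\mu\mapsto\sigma(t_m,x,\mu)$ (Assumption \ref{Ass:AssumptionII}(2)-(3)) and the matrix inequality $\sigma\preceq\theta$ (Assumption \ref{Ass:AssumptionII}(4)) then handle the diffusion term through a Gaussian convex-order argument (the centered noise $Z_{m+1}$ being independent of the coupling); finally $\sigma^0\equiv\theta^0$ with the convex-order-constant property in $\mu$ (Assumption \ref{Ass:AssumptionII}(5)) cancels the common noise contribution.

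Second, I would establish the functional statement by backward induction on $m$, using the operators $Q_{m+1}^{\,\omega^0}$, $\Phi_m^{\,\omega^0}$, $\Psi_m^{\,\omega^0}$ already named in the notation table. Concretely, for a convex function $f:(\mathbb{R}^d)^{m+2}\to\mathbb{R}$ with $p$-polynomial growth, set
\begin{equation*}
(\Phi_m^{\,\omega^0} f)(x_{0:m}) := \mathbb{E}^1\!\left[f\big(x_{0:m},\, x_m + h b(t_m,x_m,\mu_m^X) + \sqrt{h}\sigma(t_m,x_m,\mu_m^X)Z_{m+1} + \sqrt{h}\sigma^0(t_m,\mu_m^X)Z^0_{m+1}(\omega^0)\big)\right],
\end{equation*}
with $\mu_m^X:=\mathcal{L}^1(\bar X_{t_m})(\omega^0)$, and define $\Psi_m^{\,\omega^0}$ analogously for $\bar Y$. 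Two facts must be shown: \textbf{(i) Propagation of convexity:} if $f$ is convex (jointly in $(x_0,\ldots,x_{m+1})$), then $\Phi_m^{\,\omega^0} f$ is convex in $(x_0,\ldots,x_m)$; this uses affineness of $b$ in $x$, convexity of $\sigma$ in $x$, and the Jensen-type inequality $\mathbb{E}[\varphi(A Z+c)]$ being convex in $A$ for $\varphi$ convex (after conditioning on $Z$). \textbf{(ii) Operator comparison:} for any convex $f$ with $p$-polynomial growth,
\begin{equation*}
(\Phi_m^{\,\omega^0} f)(x_{0:m}) \le (\Psi_m^{\,\omega^0} f)(x_{0:m}),
\end{equation*}
which follows from $\sigma\preceq\theta$, from $\mu\mapsto\sigma(t,x,\mu)$ being convex-order monotone (combined with the marginal inequality $\mu_m^X\conright\mu_m^Y$ proved above), and from $\sigma^0\equiv\theta^0$ with convex-order constancy. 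Iterating $F \mapsto \Phi_{M-1}^{\,\omega^0}F \mapsto \Phi_{M-2}^{\,\omega^0}\Phi_{M-1}^{\,\omega^0}F \mapsto \cdots$ from the right and comparing at each step with the $\Psi$-tower, then taking expectation over the initial datum using $X_0\conright Y_0$, yields the claim.

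\textbf{Main obstacle.} The crux is that the convexity propagation and the operator comparison are entangled through the measure arguments $\mu_m^X,\mu_m^Y$: one cannot compare operators before knowing the marginal convex order of the conditional laws, yet establishing this marginal order requires propagating convexity of one-dimensional test functions through the very same one-step map. The right order of arguments is therefore to first close the forward induction on marginals (where only convex test functions $\varphi:\mathbb{R}^d\to\mathbb{R}$ need be considered), and then feed this into the backward induction on the trajectory functional. Controlling integrability throughout — in particular ensuring $L^p(\mathbb{P}^1)$ bounds on $\bar X_{t_m}(\omega^0,\cdot)$ uniform in $m$ so that $\Phi_m^{\,\omega^0}f$ remains finite with $p$-polynomial growth — relies on the Lipschitz and H\"older structure of Assumption \ref{Ass:AssumptionI}, and must be checked pointwise in $\omega^0$; this uniformity is delicate because $Z^0_{m+1}(\omega^0)$ is unbounded, so the bounds will depend measurably on $\omega^0$ but that suffices for the stated pointwise conclusion.
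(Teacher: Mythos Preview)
Your two-stage strategy (forward induction on $m$ for the marginal order, then a backward induction via one-step operators for the functional) is exactly the paper's route; the result you label ``auxiliary marginal statement'' is the paper's Proposition~\ref{prop:marginalEuler}, and your backward tower is its $\Phi_m^{\,\omega^0}$--$\Psi_m^{\,\omega^0}$ construction. So the plan is correct and aligned with the paper.

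Two implementation differences are worth noting. First, for the marginal step the paper does \emph{not} invoke a Strassen coupling: it works directly with convex test functions $\varphi$ and the one-step kernel, using Lemmas~\ref{lem:Euler_kernel_order}--\ref{lem:Euler_fun_con} to pass successively from $\mu_m^X$ to $\mu_m^Y$ and from $\sigma$ to $\theta$. Your coupling idea can be made to work, but it is heavier than needed and the phrase ``plugs into the one-step recursion'' is imprecise (the actual scheme is tied to the original $\bar X_{t_m}$, not to an auxiliary coupled variable). Second, your $\Phi_m^{\,\omega^0}$ hardcodes the measure $\mu_m^X$, whereas the paper keeps $\mu_{m:M}$ as a \emph{free} argument of $\Phi_m^{\,\omega^0}(x_{0:m};\mu_{m:M})$. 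This matters for bookkeeping: with free measures one proves $\Phi_m^{\,\omega^0}\le \Psi_m^{\,\omega^0}$ once and for all (backward induction using only convexity of $\Phi_{m+1}^{\,\omega^0}$ in $x_{m+1}$, never of $\Psi_{m+1}^{\,\omega^0}$), and the final inequality decomposes cleanly into three moves --- change $X_0\to Y_0$ (convexity in $x_0$), change $\mu_{0:M}^X\to\mu_{0:M}^Y$ (monotonicity in the measure arguments, Lemma~\ref{lem:property_Phi_m}(ii)), then $\Phi_0^{\,\omega^0}\to\Psi_0^{\,\omega^0}$. With your hardcoded version the same conclusion is reachable, but you must be careful to apply the comparison (ii) only to the $\Phi$-tower $g_{m+1}^X$ (which is convex) and then use plain monotonicity of $\Psi_m^{\,\omega^0}$ on the induction hypothesis; applying (ii) to $g_{m+1}^Y$ would fail since $\theta$ is not assumed convex in $x$. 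The free-measure formulation also transfers directly to part~(b) of Theorem~\ref{thm:convex_order_cond}.
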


The section is organized as follows: first, Subsection \ref{subsec:Preliminary-results} introduces preliminary results. Then, Subsections \ref{subsec:convex-order-Euler} establishes the proof of Proposition \ref{prop: funconvexEuler-cond}. Finally, Subsection \ref{subsec:proof-thm-conv-order} provides the proof of Theorem \ref{thm:convex_order_cond}.

\subsubsection{Preliminary results and marginal convex order result of the Euler scheme}\label{subsec:Preliminary-results}

The following proposition, sourced from \cite{gall2024particle}, provides moment estimates and the convergence rate for the Euler scheme.
\begin{prop}\label{prop:cvg_Euler_scheme}
Assume that Assumption~\ref{Ass:AssumptionI} holds with some $p\in[2, \infty)$. 
\begin{enumerate}[$(a)$]
 \item There exists a constant $C > 0$ depending on $p, d, \sigma, \theta, T, L$ such that, for every $t\in[0, T]$ and for every $M\geq 1$, 
\begin{equation}\label{boundedx}
\vertii{\sup_{u\in[0, t]}\left|X_{u}\right|}_{p}\vee \vertii{\sup_{u\in[0, t]}\left|\bar{X}_{u}^{M}\right|}_{p}\leq C(1+\vertii{X_{0}}_{p}).
\end{equation}
Moreover, there exists a constant $\kappa > 0$ depending on $L, b, \sigma, \left\Vert X_{0}\right\Vert_{p}, p, d, T$ such that for any $s,t\in[0, T]$, $s\le t$, 
\[\forall\, \, M\geq 1, \quad\left\Vert \bar{X}^{M}_{t}-\bar{X}^{M}_{s}\right\Vert_{p}\vee\left\Vert X_{t}-X_{s}\right\Vert_{p}\leq \kappa\sqrt{t-s}.\]

\item There exists a constant $\widetilde{C} > 0$ depending on $p, d, T, L, \rho, \vertii{X_{0}}_{p}$ such that 
\[\Big\Vert\sup_{t\in[0, T]}\left|X_{t}-\bar{X}^{M}_{t}\right|\Big\Vert_{p}\leq \widetilde{C} h^{\frac{1}{2}\land \rho}. \]
\end{enumerate}
\end{prop}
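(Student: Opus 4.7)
The plan is to adapt the classical moment and strong-error estimates for Euler schemes of standard McKean-Vlasov SDEs to the common-noise setting. Both parts rest on Gronwall-type arguments after a Burkholder-Davis-Gundy (BDG) inequality is applied to the two stochastic integrals driven by $B$ and $B^0$. The presence of the conditional laws $\mathcal{L}^1(X_t)$ and $\mathcal{L}^1(\bar X^M_t)$ in the coefficients is handled by the pathwise coupling bound
\begin{equation*}
\mathcal{W}_p^p\!\bigl(\mathcal{L}^1(U)(\omega^0),\mathcal{L}^1(V)(\omega^0)\bigr)\leq \mathbb{E}^1\!\bigl[|U(\omega^0,\cdot)-V(\omega^0,\cdot)|^p\bigr],\quad \mathbb{P}^0\text{-a.s.},
\end{equation*}
so that, after integrating in $\omega^0$, the expected conditional Wasserstein distance is controlled by the global $L^p$-distance; in particular $\mathbb{E}\bigl[\mathcal{W}_p^p(\mathcal{L}^1(X_t),\delta_0)\bigr]=\mathbb{E}\bigl[|X_t|^p\bigr]$.

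For part (a), I would first derive the moment estimate for $X$ by writing it in integral form, taking $\sup_{u\leq t}$ and applying BDG together with the Lipschitz conditions from Assumption~\ref{Ass:AssumptionI} and the coupling observation above, to reach an inequality of the form
\begin{equation*}
\mathbb{E}\!\left[\sup_{u\leq t}|X_u|^p\right]\leq C\bigl(1+\|X_0\|_p^p\bigr)+C\int_0^t \mathbb{E}\!\left[\sup_{u\leq s}|X_u|^p\right]\d s,
\end{equation*}
to which Gronwall's lemma applies. The same computation, combined with an induction on the grid to guarantee finiteness of moments at each step, applies to the continuous extension~\eqref{eq:continuous_EulerX} and yields the uniform-in-$M$ bound for $\sup_{u\leq t}|\bar X^M_u|$: the linear dependence on the Gaussian increments $Z_{m+1}$, $Z^0_{m+1}$ propagates $L^p$-moments step-by-step with constants independent of $M$. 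The time-increment estimate then follows by writing $\bar X^M_t-\bar X^M_s$ (respectively $X_t-X_s$) as a drift of length $t-s$ plus two martingale pieces of quadratic variation $O(t-s)$, each controlled by $\sqrt{t-s}$ via BDG and the uniform moment bound just obtained.

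For part (b), I would decompose
\begin{equation*}
X_t-\bar X^M_t=\int_0^t\!\bigl[b(s,X_s,\mathcal{L}^1(X_s))-b(\underline s,\bar X^M_{\underline s},\mathcal{L}^1(\bar X^M_{\underline s}))\bigr]\d s\,+\,\text{two martingale increments},
\end{equation*}
where $\underline s$ denotes the last grid point $\leq s$, and split each integrand into a $\rho$-H\"older-in-time piece (which, after using the uniform moment bound from (a), produces a factor $h^\rho$) and a Lipschitz-in-space-and-measure piece. The measure-Lipschitz part is estimated via the coupling bound: $\mathcal{W}_p(\mathcal{L}^1(X_s),\mathcal{L}^1(\bar X^M_{\underline s}))$ is controlled in $L^p$ by $\|X_s-\bar X^M_{\underline s}\|_p\leq \|X_s-\bar X^M_s\|_p+\|\bar X^M_s-\bar X^M_{\underline s}\|_p$, the last term being $O(\sqrt h)$ by the time-increment estimate of~(a). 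Applying BDG to the martingale contributions and Gronwall to the resulting integral inequality for $t\mapsto \mathbb{E}[\sup_{u\leq t}|X_u-\bar X^M_u|^p]$ then delivers the claimed rate $h^{1/2\wedge\rho}$.

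The main obstacle is the careful bookkeeping of the conditional laws: each appearance of $\mathcal{W}_p(\mathcal{L}^1(\cdot),\mathcal{L}^1(\cdot))$ must be bounded by an $L^p$ distance \emph{after} taking $\mathbb{E}^0$, so that the Gronwall argument closes on the scalar quantity $\mathbb{E}[\sup_{u\leq t}|X_u-\bar X^M_u|^p]$, rather than on a $\mathcal{F}^0$-measurable random variable. Once this conditional-law accounting is in place, both parts reduce to the standard Euler-scheme estimates; since this proposition is explicitly cited from \cite{gall2024particle}, I would only spell out these common-noise-specific adaptations and refer to that paper for the remaining computations.
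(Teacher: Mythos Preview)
Your proposal is correct and matches the paper's treatment: the paper does not prove this proposition at all but simply cites it as ``sourced from \cite{gall2024particle}'', exactly as you acknowledge at the end of your sketch. The common-noise-specific adaptations you outline---particularly the coupling bound $\mathbb{E}\bigl[\mathcal{W}_p^p(\mathcal{L}^1(U),\mathcal{L}^1(V))\bigr]\leq \|U-V\|_p^p$ used to close the Gronwall argument on a scalar rather than an $\mathcal{F}^0$-measurable quantity---are precisely the ingredients that \cite{gall2024particle} employs (and that appear elsewhere in the present paper, e.g.\ in the proof of Proposition~\ref{prop:cvg_truncated_Euler_scheme}).
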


The following lemma provides a characterization of the convex order between two probability distributions.

\begin{lem}[Lemma A.1 in \cite{MR4116705}]
Let $\mu, \nu\in\mathcal{P}_1(\RD)$. We have $\mu\conright \nu$ if and only if for every convex function $\varphi: \RD \rightarrow \RR$ with (at most) linear growth in the sense that there exists a real constant $C>0$ such that, for every $x\in \RD$, $|\varphi (x)|\leq C (1+|x|)$,  there holds 
\[\int_{\RD}\varphi (x)\mu(dx)\leq \int_{\RD}\varphi (x)\nu(dx).\]
\end{lem}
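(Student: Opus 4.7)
I would prove the two directions separately. The ``only if'' direction is immediate from the definition of $\conright$ in~\eqref{defconvmeasure}: any convex function $\varphi: \RD \to \RR$ with linear growth is integrable against both $\mu$ and $\nu$ (since $\mu, \nu \in \mathcal{P}_1(\RD)$), and the inequality $\int \varphi \, d\mu \le \int \varphi \, d\nu$ is a special case of the defining property.

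For the ``if'' direction, the plan is to approximate an arbitrary convex function $\varphi$ from below by convex functions with linear growth. Since $\varphi$ is finite-valued on all of $\RD$, it is continuous and equals the pointwise supremum of its affine minorants. Using the separability of $\RD$, I would pick a dense sequence $(x_n)_{n \ge 1}$ together with subgradients $v_n \in \partial \varphi(x_n)$, and set $\ell_n(x) := \varphi(x_n) + \langle v_n, x - x_n \rangle$. Each $\ell_n$ satisfies $\ell_n \le \varphi$ with equality at $x_n$; combining this with density of $(x_n)$, local boundedness of subgradients of a continuous convex function on $\RD$, and continuity of $\varphi$ yields $\sup_{n \ge 1} \ell_n = \varphi$ pointwise. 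Setting $\varphi_N := \max_{1 \le n \le N} \ell_n$, each $\varphi_N$ is convex with linear growth, and $\varphi_N \uparrow \varphi$ pointwise as $N \to \infty$.

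The hypothesis then yields $\int_{\RD} \varphi_N \, d\mu \le \int_{\RD} \varphi_N \, d\nu$ for every $N$, and it remains to pass to the limit. Since $\varphi_N \ge \ell_1$, there exists $C > 0$ such that $\varphi_N + C(1 + |\cdot|) \ge 0$ for all $N$; this shifted sequence is non-decreasing and converges monotonically to $\varphi + C(1 + |\cdot|)$. Applying monotone convergence on both sides and then subtracting the finite integrals $\int C(1 + |x|) \, d\mu$ and $\int C(1 + |x|) \, d\nu$ (finite because $\mu, \nu \in \mathcal{P}_1(\RD)$) shows that $\int \varphi_N \, d\mu \to \int \varphi \, d\mu$ and $\int \varphi_N \, d\nu \to \int \varphi \, d\nu$ in $(-\infty, +\infty]$, and the inequality passes to the limit, giving the claim. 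I do not anticipate any major obstacle; the subtlest point is verifying that a countable family of affine minorants is enough to recover $\varphi$ pointwise, which rests on the local boundedness of subdifferentials of continuous convex functions on a finite-dimensional space.
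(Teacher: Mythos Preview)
Your proof is correct. The paper does not actually prove this lemma; it merely quotes it as Lemma~A.1 from~\cite{MR4116705} and uses it as a black box. Your argument---approximating an arbitrary convex function from below by finite maxima of affine minorants indexed by a dense sequence, then passing to the limit via monotone convergence after an affine shift---is the standard route and is carried out cleanly. The only delicate point, which you correctly flag, is that a \emph{countable} family of tangent affine functions suffices to recover $\varphi$; this follows from local Lipschitz continuity of finite convex functions on $\RD$ (hence local boundedness of subdifferentials), exactly as you indicate.
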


Let $\mathcal{C}_{\mathrm{cv}}(\RD, \RR)\coloneqq \{\varphi \;:\; \RD\rightarrow \RR \text{ convex function}\}$.  We fix $M \ge 1$ for the remaining part of this subsection. For every $m=0, ..., M-1$  and for every $\omega^0\in\Omega^0$, we define an operator 
\[Q^{\,\omega^0}_{m+1}: \mathcal{C}_{\mathrm{cv}}(\RD, \RR)\longrightarrow \mathcal{C}(\RD\times \mathcal{P}_1(\RD)\times \mathbb{M}_{d\times q} )\]
\noindent  associated with $Z_{m+1}$ and $Z_{m+1}^0(\omega^0)$ defined in \eqref{eq:EulerX} and \eqref{eq:EulerY} by
\begin{align}\label{eq:def-operator-euler}
&(x, \mu, u)\in\RD\times \mathcal{P}_1(\RD)\times \mathbb{M}_{d\times q} \longmapsto \nonumber\\
&\hspace{0.5cm}(Q_{m+1}^{\,\omega^0}\varphi) (x, \mu, u) \coloneqq \EE^1\Big[\varphi\big(x+h\,b(t_m, x, \mu)+\sqrt{h}\,u\,Z_{m+1}+\sqrt{h}\,\sigma^0(t_m, \mu)\,Z_{m+1}^0(\omega^0)\big)\Big].
\end{align}
The following lemmas follow directly from Lemmas 4.2 and 4.3 in \cite{Liu2023functional}, noting that if a function $\varphi:\RD\rightarrow \RR$ is convex with linear growth, then for any fixed vector $v \in \RD$, the function $\varphi(\cdot + v)$ remains convex with linear growth. 

\begin{lem}\label{lem:Euler_kernel_order} Assume that Assumptions~\ref{Ass:AssumptionI} and \ref{Ass:AssumptionII} hold with some $p\in[2, \infty)$. 
 Let $\varphi\in \mathcal{C}_{\mathrm{cv}}(\RD, \RR)$ with linear growth and let $\mu\in\mathcal{P}_{1}(\RD)$. Then, for every $m=1, \dots, M$,  and for every $\omega^0\in\Omega^0$, 
\begin{enumerate}[$(i)$] 
\item   the function $(x,u)\mapsto(Q_m^{\,\omega^0}\varphi )(x, \mu, u)$ is finite and convex;
\item for any fixed $x\!\in \RD$, the function $u\mapsto(Q_m^{\,\omega^0}\varphi)(x, \mu, u )$ attains its minimum at $\mathbf{0}_{d\times q}$, where $\mathbf{0}_{d\times q}$ is the zero-matrix of size $d\times q$; 
\item for any fixed $x\!\in \RD$, the function $u\mapsto(Q_m^{\,\omega^0}\varphi)(x, \mu, u )$ is non-decreasing with respect to the partial order of $d\times q$ matrix (\ref{eq:def_matrix_partial_order}).
\end{enumerate}
\end{lem}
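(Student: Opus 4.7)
The plan is to exploit the fact that, once $\omega^0 \in \Omega^0$ is fixed, $Z_{m+1}^0(\omega^0) \in \RR^q$ becomes a deterministic vector, so that the common-noise contribution collapses to a mere shift. Writing $c_{x,\mu} := x + h\,b(t_m, x, \mu) + \sqrt{h}\,\sigma^0(t_m, \mu)\,Z_{m+1}^0(\omega^0)$, the operator reduces to
\begin{equation*}
(Q_{m+1}^{\omega^0}\varphi)(x, \mu, u) = \EE^1\big[\varphi\big(c_{x,\mu} + \sqrt{h}\, u Z_{m+1}\big)\big],
\end{equation*}
where only $Z_{m+1} \sim \mathcal{N}(0, \mathbf{I}_q)$ remains random under $\PP^1$. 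This expression is structurally identical to the one analyzed in Lemmas 4.2 and 4.3 of \cite{Liu2023functional}, the sole difference being that the deterministic shift $c_{x,\mu}$ now carries an extra term coming from the common noise. Since translating a convex function by a constant vector preserves convexity and growth, the arguments of those references transpose essentially verbatim, and no serious obstacle is anticipated.

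For $(i)$, I would invoke Assumption~\ref{Ass:AssumptionII}-(1), which states that $b$ is affine in $x$; hence $x \mapsto c_{x, \mu}$ is affine for each $\mu$. The map $(x, u) \mapsto c_{x, \mu} + \sqrt{h}\, u Z_{m+1}$ is then jointly affine in $(x, u)$ for every realization of $Z_{m+1}$, so composition with the convex $\varphi$ yields a convex function of $(x, u)$, and convexity is preserved by the $\EE^1$-expectation. Finiteness follows from the linear growth of $\varphi$ and the fact that $Z_{m+1}$ has finite first moment.

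For $(ii)$ and $(iii)$, I fix $x$ and observe that $\sqrt{h}\, u Z_{m+1} \sim \mathcal{N}(0, h\, u u^{\top})$. For $(ii)$, Jensen's inequality applied to the centered random vector $\sqrt{h}\, u Z_{m+1}$ gives $(Q_{m+1}^{\omega^0}\varphi)(x, \mu, u) \ge \varphi(c_{x, \mu}) = (Q_{m+1}^{\omega^0}\varphi)(x, \mu, \mathbf{0}_{d \times q})$. For $(iii)$, if $u_1 \preceq u_2$ in the sense of~\eqref{eq:def_matrix_partial_order}, then $h(u_2 u_2^{\top} - u_1 u_1^{\top})$ is positive semidefinite, so on an enlarged probability space one can introduce an independent Gaussian increment $W \sim \mathcal{N}(0, h(u_2 u_2^{\top} - u_1 u_1^{\top}))$, independent of $\sqrt{h}\, u_1 Z_{m+1}$, with $\sqrt{h}\, u_1 Z_{m+1} + W \stackrel{d}{=} \sqrt{h}\, u_2 Z_{m+1}$. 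A conditional Jensen inequality then yields $\mathcal{N}(0, h\, u_1 u_1^{\top}) \conright \mathcal{N}(0, h\, u_2 u_2^{\top})$, and translating by $c_{x, \mu}$ and composing with $\varphi$ delivers the desired monotonicity $(Q_{m+1}^{\omega^0}\varphi)(x, \mu, u_1) \le (Q_{m+1}^{\omega^0}\varphi)(x, \mu, u_2)$.
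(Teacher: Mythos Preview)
Your proposal is correct and follows essentially the same approach as the paper: the paper's entire proof is the one-line remark that Lemmas~4.2 and~4.3 of \cite{Liu2023functional} apply directly once one notes that, for fixed $\omega^0$, the common-noise term $\sqrt{h}\,\sigma^0(t_m,\mu)Z_{m+1}^0(\omega^0)$ is a deterministic vector and that $\varphi(\cdot+v)$ remains convex with linear growth. You identify exactly this reduction and additionally spell out the underlying arguments (affinity of $b$ in $x$ for~$(i)$, Jensen for~$(ii)$, Gaussian covariance ordering for~$(iii)$), which is more detail than the paper gives but entirely in the same spirit.
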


\begin{lem}\label{lem:Euler_fun_con} Assume that Assumptions~\ref{Ass:AssumptionI} and \ref{Ass:AssumptionII} hold with some $p\in[2, \infty)$. 
Let $\varphi\in\mathcal{C}_{\mathrm{cv}}(\RD, \RR)$ with linear growth.
Then for a fixed $\mu\in\mathcal{P}_{p}(\mathbb{R}^{d})$ and $\omega^0\in\Omega^0$, 
the functions \[x\mapsto \mathbb{E}^1\big[\varphi\big(x+h\,b(t_m, x, \mu)+\sqrt{h}\,\sigma(t_m,x, \mu)  Z_{m+1}+\sqrt{h}\,\sigma^0(t_m, \mu)\,Z_{m+1}^0(\omega^0)\big)\big],\: 0\leq m \leq M-1,\] are convex with linear growth. 
\end{lem}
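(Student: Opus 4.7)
The plan is to express the function
\[f(x) := \EE^1\big[\varphi\big(x + h\,b(t_m, x, \mu) + \sqrt{h}\,\sigma(t_m, x, \mu)\,Z_{m+1} + \sqrt{h}\,\sigma^0(t_m, \mu)\,Z_{m+1}^0(\omega^0)\big)\big]\]
as the composition $f(x) = (Q_{m+1}^{\omega^0}\varphi)(x, \mu, \sigma(t_m, x, \mu))$, where $Q_{m+1}^{\omega^0}$ is the operator introduced in~\eqref{eq:def-operator-euler} (for fixed $\omega^0$, the common-noise contribution is a constant vector, already absorbed into the definition of the operator). Since $\varphi$ is convex with linear growth, Lemma~\ref{lem:Euler_kernel_order} applies and supplies two key ingredients: the joint convexity of $(x,u) \mapsto (Q_{m+1}^{\omega^0}\varphi)(x, \mu, u)$, and its non-decreasingness in $u$ with respect to the partial order $\preceq$ of~\eqref{eq:def_matrix_partial_order}. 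I would combine these with the convexity of the matrix-valued map $x \mapsto \sigma(t_m, x, \mu)$ from Assumption~\ref{Ass:AssumptionII}(2), read with respect to the same partial order $\preceq$.

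The convexity of $f$ would then follow from a three-step chain. Fix $x_1, x_2 \in \RD$, $\lambda \in [0,1]$, and set $x_\lambda := \lambda x_1 + (1-\lambda) x_2$ and $\bar{\sigma}_\lambda := \lambda\,\sigma(t_m, x_1, \mu) + (1-\lambda)\,\sigma(t_m, x_2, \mu)$. First, the $\preceq$-convexity of $\sigma$ gives $\sigma(t_m, x_\lambda, \mu) \preceq \bar{\sigma}_\lambda$. Next, the $\preceq$-monotonicity of $u \mapsto (Q_{m+1}^{\omega^0}\varphi)(x_\lambda, \mu, u)$ from Lemma~\ref{lem:Euler_kernel_order}(iii) upgrades this to
\[f(x_\lambda) = (Q_{m+1}^{\omega^0}\varphi)(x_\lambda, \mu, \sigma(t_m, x_\lambda, \mu)) \le (Q_{m+1}^{\omega^0}\varphi)(x_\lambda, \mu, \bar{\sigma}_\lambda).\]
Finally, the joint convexity in $(x,u)$ from Lemma~\ref{lem:Euler_kernel_order}(i) splits the right-hand side into $\lambda f(x_1) + (1-\lambda) f(x_2)$, which concludes the convexity step.

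The linear growth of $f$ would be routine: combining the linear growth of $\varphi$ with the affine bounds $|b(t_m, x, \mu)| + \vertiii{\sigma(t_m, x, \mu)} \le C_\mu(1 + |x|)$ coming from Assumption~\ref{Ass:AssumptionI} (Lipschitz in $x$, Hölder in $t$, measured against a reference point) and the finite first moment of the Gaussian $Z_{m+1}$, one obtains $|f(x)| \le C'_\mu(1 + |x|)$. I expect the only real subtlety to be interpretative rather than computational: the argument above requires the ``convexity'' in Assumption~\ref{Ass:AssumptionII}(2) to be read with respect to the partial order $\preceq$, which is the unique reading compatible with the direction of monotonicity in Lemma~\ref{lem:Euler_kernel_order}(iii); once this convention is adopted (as in~\cite{Liu2023functional}), the conclusion is the standard fact that the composition of a convex, $\preceq$-non-decreasing function with a $\preceq$-convex function is convex.
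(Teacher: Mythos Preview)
Your argument is correct and follows precisely the route the paper intends: the paper defers to Lemmas~4.2 and~4.3 of \cite{Liu2023functional} (noting only that the common-noise term is a fixed translation), and your proof is exactly the adaptation of the proof of Lemma~4.3 there, using Lemma~\ref{lem:Euler_kernel_order} (the paper's analogue of their Lemma~4.2) together with the $\preceq$-convexity of $x\mapsto\sigma(t_m,x,\mu)$ from Assumption~\ref{Ass:AssumptionII}(2). Your reading of ``convex'' for matrix-valued maps as $\preceq$-convex is the one the paper uses elsewhere (cf.\ Assumption~\ref{Ass:Assumption-standard-cv-new}), so there is no interpretative gap.
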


The following proposition states the marginal convex order of the Euler scheme.

\begin{prop}\label{prop:marginalEuler} 
Assume that Assumptions~\ref{Ass:AssumptionI} and \ref{Ass:AssumptionII} hold with some $p\in[2, \infty)$. For $\PP^0$-almost 
every $\omega^0\in\Omega^0$,  for all $m \in \{0, \ldots, M\}$, \, we have
\begin{equation}\label{eq:marginalEuler}
\mathcal{L}^1(\bar{X}_{t_m})(\omega^0)\conright \mathcal{L}^1(\bar{Y}_{t_m})(\omega^0).
\end{equation}
Hence, for every $m= 0, ..., M$, $\mathcal{L}(\,\bar{X}_{t_m}\,|\,B^0\,)\conright \mathcal{L}(\,\bar{Y}_{t_m}\,|\,B^0\,)$ almost surely.
\end{prop}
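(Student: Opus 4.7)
The plan is to prove the proposition by finite induction on $m \in \{0, \ldots, M\}$, testing against convex functions with linear growth (via Lemma A.1 of \cite{MR4116705}). The base case $m=0$ follows from Assumption~\ref{Ass:AssumptionII}-(6) and the independence of $X_0, Y_0$ from $B^0$, which gives $\mathcal{L}^1(\bar{X}_0)(\omega^0) = \mathcal{L}(X_0) \conright \mathcal{L}(Y_0) = \mathcal{L}^1(\bar{Y}_0)(\omega^0)$. For the inductive step I fix $\omega^0$ in a $\mathbb{P}^0$-full set on which $\mu_m^X := \mathcal{L}^1(\bar{X}_{t_m})(\omega^0)$ and $\mu_m^Y := \mathcal{L}^1(\bar{Y}_{t_m})(\omega^0)$ are well defined and satisfy $\mu_m^X \conright \mu_m^Y$, and I pick an arbitrary convex $\varphi : \RD \to \RR$ with linear growth.

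Freezing $\omega^0$ turns $Z^0_{m+1}(\omega^0)$ into a deterministic vector of $\RR^q$, and under $\mathbb{P}^1$ the innovation $Z_{m+1}$ is independent of $\bar{X}_{t_m}(\omega^0, \cdot)$. Combining the Euler recursion~\eqref{eq:EulerX} with the definition~\eqref{eq:def-operator-euler} of $Q_{m+1}^{\omega^0}$, I rewrite
\[\EE^1\big[\varphi(\bar{X}_{t_{m+1}}(\omega^0, \cdot))\big] = \EE^1\big[ f_X(\bar{X}_{t_m}(\omega^0, \cdot)) \big], \qquad f_X(x) := (Q_{m+1}^{\omega^0}\varphi)\big(x, \mu_m^X, \sigma(t_m, x, \mu_m^X)\big),\]
and likewise with $Y$ and $\theta$ in place of $\sigma$, producing $f_Y(x) := (Q_{m+1}^{\omega^0}\varphi)(x, \mu_m^Y, \theta(t_m, x, \mu_m^Y))$.

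The core idea is to interpose the auxiliary function $g(x) := (Q_{m+1}^{\omega^0}\varphi)\big(x, \mu_m^Y, \sigma(t_m, x, \mu_m^Y)\big)$ and to chain three estimates. First, $f_X(x) \le g(x)$ pointwise: Assumptions~\ref{Ass:AssumptionII}-(1) and (5) ensure that $b(t_m, x, \cdot)$ and $\sigma^0(t_m, \cdot)$ take the same value on $\mu_m^X$ and $\mu_m^Y$, so $Q_{m+1}^{\omega^0}\varphi$ is unchanged when the second slot is switched from $\mu_m^X$ to $\mu_m^Y$; Assumption~\ref{Ass:AssumptionII}-(3) then gives $\sigma(t_m, x, \mu_m^X) \preceq \sigma(t_m, x, \mu_m^Y)$, and Lemma~\ref{lem:Euler_kernel_order}-(iii) (monotonicity of $Q_{m+1}^{\omega^0}\varphi$ in its matrix slot w.r.t.\ $\preceq$) closes the step. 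Second, by Lemma~\ref{lem:Euler_fun_con} applied with $\mu = \mu_m^Y$ the function $g$ is convex with linear growth in $x$, so the induction hypothesis yields $\EE^1[g(\bar{X}_{t_m}(\omega^0, \cdot))] \le \EE^1[g(\bar{Y}_{t_m}(\omega^0, \cdot))]$. Third, $g(x) \le f_Y(x)$ pointwise: Assumption~\ref{Ass:AssumptionII}-(4) provides $\sigma(t_m, x, \mu_m^Y) \preceq \theta(t_m, x, \mu_m^Y)$, and Lemma~\ref{lem:Euler_kernel_order}-(iii) concludes. Chaining the three inequalities gives $\EE^1[\varphi(\bar{X}_{t_{m+1}}(\omega^0, \cdot))] \le \EE^1[\varphi(\bar{Y}_{t_{m+1}}(\omega^0, \cdot))]$, hence $\mu_{m+1}^X \conright \mu_{m+1}^Y$; a finite union of null sets yields the almost-sure statement, and the final reformulation in terms of $\mathcal{L}(\bar{X}_{t_m}\,|\,B^0)$ follows from Convention~1 of Subsection~\ref{convention}.

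The main obstacle is structural: $\theta$ is not assumed to be convex in $x$, so $f_Y$ need not inherit convexity and the induction hypothesis cannot be applied to it directly. The auxiliary $g$ is designed precisely to sit between $f_X$ and $f_Y$ in the matrix order while retaining convexity in $x$ (via $\sigma$, for which Assumption~\ref{Ass:AssumptionII}-(2) holds); the whole argument really exploits that $b$ and $\sigma^0$ ``ignore'' the convex-order class of $\mu$, so that the only active gaps between $X$ and $Y$ lie in the matrix-valued diffusions, where monotonicity of $Q_{m+1}^{\omega^0}$ does the work. Handling the $\mathbb{P}^0$-negligible sets on which the conditional laws may fail to be defined is a minor bookkeeping issue, resolved by the finiteness of the time grid.
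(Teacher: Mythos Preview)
Your proof is correct and follows essentially the same approach as the paper: both argue by induction on $m$, and for the inductive step chain the same three inequalities (first switch the measure argument from $\mu_m^X$ to $\mu_m^Y$ using Assumptions~\ref{Ass:AssumptionII}-(1),(3),(5) and Lemma~\ref{lem:Euler_kernel_order}, then integrate against $\mu_m^Y$ instead of $\mu_m^X$ via Lemma~\ref{lem:Euler_fun_con} and the induction hypothesis, and finally replace $\sigma$ by $\theta$ using Assumption~\ref{Ass:AssumptionII}-(4) and Lemma~\ref{lem:Euler_kernel_order}). Your explicit introduction of the intermediate function $g$ and your remark on why convexity of $\theta$ in $x$ is not needed make the structure of the argument particularly transparent.
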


\begin{proof}[Proof of Proposition \ref{prop:marginalEuler}] We prove \eqref{eq:marginalEuler} by induction. The random variables $X_0$ and $Y_0$ are defined on $(\Omega^1, \mathcal{F}^1, \mathbb{P}^1)$ and Assumption \ref{Ass:AssumptionII} implies that $X_0\conright Y_0$  yielding  $\mathcal{L}^1(\bar{X}_{t_0})\conright \mathcal{L}^1(\bar{Y}_{t_0})$ by their definitions in \eqref{eq:EulerX} and \eqref{eq:EulerY}.  Fix $m \in \{0, \ldots, M-1\}$ and  assume now that for every fixed $\omega^0\in\Omega^0$, we have $\mathcal{L}^1(\bar{X}_{t_m})(\omega^0)\conright \mathcal{L}^1(\bar{Y}_{t_m})(\omega^0)$. Let $\varphi: \RD \to \mathbb{R}$ be a convex function with linear growth and let  $\mathcal{G}_m$ be the sub-$\sigma$-algebra generated by $X_0, Z_{1}, ..., Z_m$.  Note that the definition of the Euler scheme in \eqref{eq:EulerX} implies that there exists a measurable function $g$ such that for every $m=0, ..., M$,
\[\forall\,(\omega^0, \omega^1)\in\Omega^0\times\Omega^1,\quad \bar{X}_{t_m}(\omega^0, \omega^1)=g\big(X_0(\omega^1), Z_1(\omega^1), ..., Z_m(\omega^1), B^0(\omega^0)\big),\]
as $\mathcal{L}^1(\bar{X}_{t_m})$ is the conditional distribution of $\bar{X}_{t_m}$ given $B^0$. 
Hence, for a fixed $\omega^0\in\Omega^0$, the random variable $\bar{X}_{t_m}(\omega^0, \cdot)$ is $\mathcal{G}_m$-measurable and the law of $\bar{X}_{t_m}(\omega^0, \cdot)$ is $\mathcal{L}^1(\bar{X}_{t_m})(\omega^0)$ by the definition of $\mathcal{L}^1$ in \eqref{def:L1law}.
Thus,
\begin{align}
&\EE^1 \Big[\varphi\Big(\bar{X}_{t_{m+1}}(\omega^0, \cdot)\Big)\;\Big] \nonumber\\
&= \EE ^1\Big[ \EE^1 \Big[ \varphi\Big( \bar{X}_{t_m}(\omega^0, \cdot)+ h\, b\big(t_m, \bar{X}_{t_m}(\omega^0, \cdot), \mathcal{L}^1(\bar{X}_{t_m})(\omega^0)\big) \nonumber\\
&\hspace{1cm}+\sqrt{h}\,\sigma\big(t_m, \bar{X}_{t_m}(\omega^0, \cdot), \mathcal{L}^1(\bar{X}_{t_m})(\omega^0)\big)Z_{m+1} +\sqrt{h}\,\sigma^0\big(t_m,\mathcal{L}^1(\bar{X}_{t_m})(\omega^0)\big)Z_{m+1}^0(\omega^0)\Big) \;\Big|\; \mathcal{G}_{m}\Big]  \,\Big]
\nonumber\\
&= 
\int_{\RD} \mathcal{L}^1(\bar{X}_{t_m})(\omega^0)(\d x) \;\EE^1 \Big[ \varphi\Big( x+ h\, b\big(t_m, x, \mathcal{L}^1(\bar{X}_{t_m})(\omega^0)\big) \nonumber\\
&\hspace{1.5cm}+\sqrt{h}\,\sigma\big(t_m, x, \mathcal{L}^1(\bar{X}_{t_m})(\omega^0)\big)Z_{m+1} +\sqrt{h}\,\sigma^0\big(t_m, \mathcal{L}^1(\bar{X}_{t_m})(\omega^0)\big) Z_{m+1}^0(\omega^0)\Big) \Big]\nonumber\\
&\leq\int_{\RD} \mathcal{L}^1(\bar{X}_{t_m})(\omega^0)(\d x) \;\EE^1 \Big[ \varphi\Big( x+ h\, b\big(t_m, x, \mathcal{L}^1(\bar{Y}_{t_m})(\omega^0)\big) \nonumber\\
&\hspace{1.5cm}+\sqrt{h}\,\sigma\big(t_m, x, \mathcal{L}^1(\bar{Y}_{t_m})(\omega^0)\big)Z_{m+1} +\sqrt{h}\,\sigma^0\big(t_m,\mathcal{L}^1(\bar{Y}_{t_m})(\omega^0)\big) Z_{m+1}^0(\omega^0)\Big) \Big]\nonumber\\
&\leq \int_{\RD} \mathcal{L}^1(\bar{Y}_{t_m})(\omega^0)(\d x) \;\EE^1 \Big[ \varphi\Big( x+ h\, b\big(t_m, x, \mathcal{L}^1(\bar{Y}_{t_m})(\omega^0)\big) \nonumber\\
&\hspace{1.5cm}+\sqrt{h}\,\sigma\big(t_m, x, \mathcal{L}^1(\bar{Y}_{t_m})(\omega^0)\big)Z_{m+1} +\sqrt{h}\,\sigma^0\big(t_m,\mathcal{L}^1(\bar{Y}_{t_m})(\omega^0)\big) Z_{m+1}^0(\omega^0)\Big) \Big]\nonumber\\
&\leq \int_{\RD} \mathcal{L}^1(\bar{Y}_{t_m})(\omega^0)(\d x) \;\EE^1 \Big[ \varphi\Big( x+ h\, b\big(t_m, x, \mathcal{L}^1(\bar{Y}_{t_m})(\omega^0)\big) \nonumber\\
&\hspace{1.5cm}+\sqrt{h}\,\theta\big(t_m, x, \mathcal{L}^1(\bar{Y}_{t_m})(\omega^0)\big)Z_{m+1} +\sqrt{h}\,\sigma^0\big(t_m,\mathcal{L}^1(\bar{Y}_{t_m})(\omega^0)\big) Z_{m+1}^0(\omega^0)\Big) \Big]\nonumber\\
&=\EE^1 \Big[\varphi\Big(\bar{Y}_{t_{m+1}}(\omega^0, \cdot)\Big)\;\Big],
\end{align}
where the first inequality follows from Assumption \ref{Ass:AssumptionII}-(1), (3), (5) and Lemma \ref{lem:Euler_kernel_order}, the second inequality follows from Lemma \ref{lem:Euler_fun_con}, and the third inequality follows from Assumption \ref{Ass:AssumptionII}-(4) and Lemma \ref{lem:Euler_kernel_order}. Thus $\bar{X}_{t_{m+1}}(\omega^0, \cdot)\conright \bar{Y}_{t_{m+1}}(\omega^0, \cdot)$, which implies  $\mathcal{L}^1(\bar{X}_{t_{m+1}})(\omega^0)\conright \mathcal{L}^1(\bar{Y}_{t_{m+1}})(\omega^0)$ by applying the definition of $\mathcal{L}^{1}(X)$ in \eqref{def:L1law}. We conclude by forward induction. \end{proof}

\subsubsection{Global convex order of the Euler scheme}\label{subsec:convex-order-Euler}

We establish the proof of Proposition~\ref{prop: funconvexEuler-cond} in this section. 
While our approach is based on the propagation of convexity throughout the Euler scheme (reminiscent of~\cite[Section 4.2]{Liu2023functional}), handling the common noise random event $\omega^0$ requires suitable modifications, which we address in detail in what follows.
Specifically, for each fixed $\omega^0\in \Omega^0$, we define a sequence of functions 
\[\Phi_m^{\,\omega^0} : (\RD)^{m+1}\times \big(\mathcal{P}_{p}(\RD)\big)^{M-m+1}\rightarrow \RR, \quad m=0, \ldots, M,
\] 
in a backward way as follows:

\noindent $\triangleright\quad$Set 
\begin{equation}\label{eq:defPhi1}
\Phi_{M}^{\,\omega^0}(x_{0: M}; \mu_{M})\coloneqq F(x_{0}, \ldots, x_{M})
\end{equation} 
where $F:(\mathbb{R}^{d})^{M+1}\rightarrow \RR$ is a convex function with $p$-polynomial growth (see \eqref{rpolygrowth2}).

\noindent $\triangleright\quad$For $m=0, \ldots, M-1$, set
\begin{align}\label{eq:defPhi2}
\Phi_{m}^{\,\omega^0}(x_{0: m}; \mu_{m: M}) &\coloneqq\big(Q_{m+1}^{\,\omega^0}\Phi_{m+1}^{\,\omega^0}(x_{0: m}, \,\cdot\,; \mu_{m+1: M})\big)\big(x_{m}, \mu_{m}, \sigma(t_m, x_{m}, \mu_{m})\big)\\
&=\EE^1 \Big[\Phi_{m+1}^{\,\omega^0}\big(x_{0: m}, x_m+ h\,b(t_m,x_{m}, \mu_{m})+\sqrt{h}\,\sigma(t_m,x_{m}, \mu_{m})Z_{m+1}\nonumber\\
&\qquad +\sqrt{h}\,\sigma^0(t_m, \mu_m)Z_{m+1}^0(\omega^0); \mu_{m+1: M}\big)\Big].\nonumber
\end{align}

The following lemma outlines the properties of the functions 
$\Phi_{m}^{\,\omega^0}, m=0, \ldots, M$. The proof follows from a direct adaptation of \cite[Lemma 4.4]{Liu2023functional}, so we omit it here.

\begin{lem}\label{lem:property_Phi_m} Assume that Assumptions~\ref{Ass:AssumptionI} and \ref{Ass:AssumptionII} hold with some $p\in[2, \infty)$. 
For every $m=0, \ldots, M$  and for every $\omega^0\in\Omega^0$,  
\begin{enumerate}[$(i)$]
\item for a fixed $\mu_{m:M}\in\big(\mathcal{P}_{p}(\RD)\big)^{M-m+1}$, the function $\Phi_{m}^{\,\omega^0}(\;\cdot\;; \mu_{m: M})$ is convex and has a  $p$-polynomial growth in $x_{0: m}$ so that $\Phi_{m}^{\,\omega^0}$ is well-defined. 
\item for a fixed $x_{0:m}\in(\RD)^{m+1}$, the function $\Phi_{m}^{\,\omega^0}(x_{0: m}\,; \;\cdot\;)$ is non-decreasing in $\mu_{m:M}$ with respect to the convex order  in the sense that for any $\mu_{m:M}, \nu_{m:M}\in \big(\mathcal{P}_{p}(\RD)\big)^{M-m+1}$ such that $\mu_{i}\conright \nu_{i}$ for all $i= m, \ldots, M$, 
\begin{align}
& \Phi_{m}^{\,\omega^0}(x_{0:m}\,; \mu_{m: M})\leq \Phi_{m}^{\,\omega^0}(x_{0:m}\, ;\nu_{m: M}). 
\end{align}
\end{enumerate}
\end{lem}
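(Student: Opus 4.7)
The plan is to proceed by backward induction on $m \in \{0, 1, \ldots, M\}$, jointly establishing (i) and (ii), with $\omega^0 \in \Omega^0$ fixed throughout. The base case $m = M$ is immediate: $\Phi_M^{\omega^0}(x_{0:M}; \mu_M) = F(x_{0:M})$ is convex in $x_{0:M}$ with $p$-polynomial growth by the assumption on $F$, and since it does not depend on $\mu_M$, the monotonicity (ii) holds trivially.

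For the inductive step, assume both (i) and (ii) for $m+1$. To prove (i) at step $m$, fix $\mu_{m:M}$ and define, for $y \in \RD$, $\varphi(x_{0:m}, y) := \Phi_{m+1}^{\,\omega^0}(x_{0:m}, y; \mu_{m+1:M})$, which is jointly convex in $(x_{0:m}, y)$ and has $p$-polynomial growth by the inductive hypothesis on (i). Writing
\begin{align*}
\Phi_m^{\,\omega^0}(x_{0:m}; \mu_{m:M}) = \bigl(Q_{m+1}^{\,\omega^0}\varphi(x_{0:m},\cdot)\bigr)\bigl(x_m, \mu_m, \sigma(t_m, x_m, \mu_m)\bigr),
\end{align*}
Lemma~\ref{lem:Euler_kernel_order} guarantees that $(x_m, u) \mapsto (Q_{m+1}^{\,\omega^0}\varphi(x_{0:m},\cdot))(x_m,\mu_m,u)$ is convex and non-decreasing in $u$ w.r.t.\ the partial order~\eqref{eq:def_matrix_partial_order}, and standard reasoning (exchanging expectation with affine arguments in the remaining variables $x_{0:m-1}$) yields joint convexity in $(x_{0:m}, u)$. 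Combined with the affinity of $b(t_m, \cdot, \mu_m)$ in $x$ (Assumption \ref{Ass:AssumptionII}-(1)) and the convexity of $x \mapsto \sigma(t_m, x, \mu_m)$ (Assumption \ref{Ass:AssumptionII}-(2)), the standard composition principle — if $K(x,u)$ is convex and non-decreasing in $u$ and $u(x)$ is convex, then $x \mapsto K(x,u(x))$ is convex — delivers the joint convexity of $\Phi_m^{\,\omega^0}(\cdot; \mu_{m:M})$. The $p$-polynomial growth propagates because $b, \sigma, \sigma^0$ all have linear growth in $(x,\mu)$ by Assumption~\ref{Ass:AssumptionI} and because $Z_{m+1}$ admits moments of all orders.

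For (ii) at step $m$, fix $x_{0:m}$ and suppose $\mu_i \conright \nu_i$ for $i = m, \ldots, M$. By Assumption \ref{Ass:AssumptionII}-(1) and (5), $b(t_m, x_m, \mu_m) = b(t_m, x_m, \nu_m)$ and $\sigma^0(t_m, \mu_m) = \sigma^0(t_m, \nu_m)$, so only the inner diffusion coefficient changes. Assumption \ref{Ass:AssumptionII}-(3) gives $\sigma(t_m, x_m, \mu_m) \preceq \sigma(t_m, x_m, \nu_m)$. Letting $\varphi_\mu(y) := \Phi_{m+1}^{\,\omega^0}(x_{0:m}, y; \mu_{m+1:M})$ and $\varphi_\nu(y) := \Phi_{m+1}^{\,\omega^0}(x_{0:m}, y; \nu_{m+1:M})$, the induction hypothesis on (ii) yields the pointwise inequality $\varphi_\mu(y) \le \varphi_\nu(y)$ for every $y$. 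Combining this with the fact, from Lemma~\ref{lem:Euler_kernel_order}-(iii), that $u \mapsto (Q_{m+1}^{\,\omega^0}\varphi_\nu)(x_m, \nu_m, u)$ is non-decreasing in $u$ w.r.t.\ the partial order~\eqref{eq:def_matrix_partial_order}, one obtains
\begin{align*}
\Phi_m^{\,\omega^0}(x_{0:m}; \mu_{m:M})
&= (Q_{m+1}^{\,\omega^0}\varphi_\mu)\bigl(x_m, \mu_m, \sigma(t_m, x_m, \mu_m)\bigr) \\
&\le (Q_{m+1}^{\,\omega^0}\varphi_\nu)\bigl(x_m, \nu_m, \sigma(t_m, x_m, \mu_m)\bigr) \\
&\le (Q_{m+1}^{\,\omega^0}\varphi_\nu)\bigl(x_m, \nu_m, \sigma(t_m, x_m, \nu_m)\bigr) = \Phi_m^{\,\omega^0}(x_{0:m}; \nu_{m:M}).
\end{align*}

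The main subtlety — and likely the step the authors refer to as requiring adaptation from \cite[Lemma 4.4]{Liu2023functional} — is tracking convexity in \emph{all} coordinates $x_{0:m}$ simultaneously, rather than only in $x_m$, through the recursive application of the kernel $Q_{m+1}^{\,\omega^0}$; here one uses that the kernel acts only through a translation/scaling in the new coordinate and leaves $x_{0:m-1}$ as a passive parameter, so joint convexity of $\varphi$ in $(x_{0:m-1}, x_m, y)$ together with convex composition in $(x_m, u)$ preserves joint convexity of the output in $x_{0:m}$. The presence of the common noise only introduces the additional term $\sqrt{h}\,\sigma^0(t_m, \mu_m) Z_{m+1}^0(\omega^0)$, which for fixed $\omega^0$ is a deterministic shift and hence poses no additional obstacle to either convexity propagation or monotonicity, provided $\sigma^0$ is constant w.r.t.\ the convex order as assumed.
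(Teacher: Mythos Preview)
Your proof is correct and follows precisely the backward-induction argument that the paper defers to \cite[Lemma 4.4]{Liu2023functional}: propagate joint convexity in $x_{0:m}$ through the kernel using the affinity of $b$ in $x$, the convexity of $\sigma$ in $x$, and the monotonicity of $Q_{m+1}^{\,\omega^0}$ in $u$, while monotonicity in $\mu_{m:M}$ follows from Assumption~\ref{Ass:AssumptionII}-(1),(3),(5) combined with Lemma~\ref{lem:Euler_kernel_order}-(iii). One very minor remark: Lemmas~\ref{lem:Euler_kernel_order} and~\ref{lem:Euler_fun_con} are stated for $\varphi$ with \emph{linear} growth, whereas you apply them to $\varphi(x_{0:m},\cdot)=\Phi_{m+1}^{\,\omega^0}(x_{0:m},\cdot;\mu_{m+1:M})$ which has $p$-polynomial growth; the conclusions (convexity and monotonicity in $u$) carry over verbatim since those properties depend only on convexity of $\varphi$ and integrability, which you correctly justify via the Gaussian moments of $Z_{m+1}$.
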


Our last ingredient before addressing Proposition~\ref{prop: funconvexEuler-cond} is the following lemma, whose proof is given in Appendix~\ref{sec:appendix-A}. 
\begin{lem}\label{lem:Phi_conditional} Assume that Assumptions~\ref{Ass:AssumptionI} and \ref{Ass:AssumptionII} hold with some $p\in[2, \infty)$.  Let $\mathcal{G}_m$ 
be the sub-$\sigma$-algebra of $\mathcal{F}^1$ generated by $X_0, Z_{1}, ..., Z_m$, $0\leq m \leq M$.
For a fixed $\omega^0\in\Omega^0$ and for every $m=0, ..., M$, we have 
\begin{align}
&\Phi_m^{\,\omega^0}\Big(\bar{X}_{t_0}(\omega^0, \cdot), ..., \bar{X}_{t_m}(\omega^0, \cdot); \mathcal{L}^1(\bar{X}_{t_m})(\omega^0),...,\mathcal{L}^1(\bar{X}_{t_M})(\omega^0)\Big)\nonumber\\
&\hspace{2cm} =\EE^1\Big[F \big(\bar{X}_{t_0}(\omega^0, \cdot), ..., \bar{X}_{t_M}(\omega^0, \cdot)\big)\,\Big|\, \mathcal{G}_m\Big].\nonumber
\end{align}
\end{lem}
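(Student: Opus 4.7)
The plan is to proceed by \emph{backward induction} on $m \in \{0, 1, \ldots, M\}$, using the tower property of conditional expectation and the recursive definition of $\Phi_m^{\,\omega^0}$ in~\eqref{eq:defPhi2}. Throughout, $\omega^0 \in \Omega^0$ is fixed, which we exploit crucially: the increments $Z_{m+1}^0(\omega^0)$ and the $\mathcal{L}^1(\bar{X}_{t_k})(\omega^0)$ become deterministic objects, and all stochasticity under $\EE^1$ comes from $X_0$ and the i.i.d. Gaussian increments $(Z_k)_{1 \le k \le M}$, which are independent on $(\Omega^1, \mathcal{F}^1, \PP^1)$.

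\textbf{Base case} ($m = M$). By~\eqref{eq:defPhi1}, $\Phi_M^{\,\omega^0}(x_{0:M}; \mu_M) = F(x_0, \ldots, x_M)$, and since $\bar{X}_{t_0}, \ldots, \bar{X}_{t_M}$ (evaluated at $(\omega^0, \cdot)$) are $\mathcal{G}_M$-measurable by the recursive definition~\eqref{eq:EulerX}, the right-hand side equals $F(\bar{X}_{t_0}(\omega^0, \cdot), \ldots, \bar{X}_{t_M}(\omega^0, \cdot))$ almost surely, matching the left-hand side.

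\textbf{Induction step.} Assume the identity holds at rank $m+1$. Using the tower property with $\mathcal{G}_m \subset \mathcal{G}_{m+1}$ gives
\begin{align*}
\EE^1\Big[F\big(\bar{X}_{t_0}(\omega^0,\cdot), \ldots, \bar{X}_{t_M}(\omega^0,\cdot)\big) \,\Big|\, \mathcal{G}_m\Big]
&= \EE^1\Big[\,\EE^1\big[F\big(\bar{X}_{t_0}(\omega^0,\cdot), \ldots, \bar{X}_{t_M}(\omega^0,\cdot)\big) \,\big|\, \mathcal{G}_{m+1}\big] \,\Big|\, \mathcal{G}_m\Big] \\
&= \EE^1\Big[\Phi_{m+1}^{\,\omega^0}\big(\bar{X}_{t_0}(\omega^0,\cdot), \ldots, \bar{X}_{t_{m+1}}(\omega^0,\cdot); \mathcal{L}^1(\bar{X}_{t_{m+1}})(\omega^0), \ldots, \mathcal{L}^1(\bar{X}_{t_M})(\omega^0)\big) \,\Big|\, \mathcal{G}_m\Big].
\end{align*}
By~\eqref{eq:EulerX}, with $\omega^0$ fixed,
\[
\bar{X}_{t_{m+1}}(\omega^0, \cdot) = \bar{X}_{t_m}(\omega^0, \cdot) + h\, b\big(t_m, \bar{X}_{t_m}(\omega^0, \cdot), \mathcal{L}^1(\bar{X}_{t_m})(\omega^0)\big) + \sqrt{h}\,\sigma\big(t_m, \bar{X}_{t_m}(\omega^0, \cdot), \mathcal{L}^1(\bar{X}_{t_m})(\omega^0)\big) Z_{m+1} + \sqrt{h}\, \sigma^0\big(t_m, \mathcal{L}^1(\bar{X}_{t_m})(\omega^0)\big) Z_{m+1}^0(\omega^0).
\]
Since $\bar{X}_{t_0}(\omega^0,\cdot), \ldots, \bar{X}_{t_m}(\omega^0,\cdot)$ are $\mathcal{G}_m$-measurable while $Z_{m+1}$ is independent of $\mathcal{G}_m$ under $\PP^1$, and since $Z_{m+1}^0(\omega^0)$ and $\mathcal{L}^1(\bar{X}_{t_k})(\omega^0)$ for $k \ge m$ are deterministic constants after fixing $\omega^0$, we can apply the freezing lemma for conditional expectations. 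This yields
\begin{align*}
&\EE^1\Big[\Phi_{m+1}^{\,\omega^0}\big(\bar{X}_{t_0:t_{m+1}}(\omega^0,\cdot); \mathcal{L}^1(\bar{X}_{t_{m+1}:t_M})(\omega^0)\big) \,\Big|\, \mathcal{G}_m\Big] \\
&= \EE^1\!\left[\Phi_{m+1}^{\,\omega^0}\!\Big(x_{0:m}, x_m + h\, b(t_m, x_m, \mu_m) + \sqrt{h}\,\sigma(t_m, x_m, \mu_m) Z_{m+1} + \sqrt{h}\,\sigma^0(t_m, \mu_m) Z_{m+1}^0(\omega^0); \mu_{m+1:M}\Big)\right]\!\bigg|_{\substack{x_{0:m} = \bar{X}_{t_{0:m}}(\omega^0,\cdot) \\ \mu_{k} = \mathcal{L}^1(\bar{X}_{t_k})(\omega^0)}}
\end{align*}
which, by the very definition~\eqref{eq:defPhi2} of $\Phi_m^{\,\omega^0}$, equals
\[
\Phi_m^{\,\omega^0}\Big(\bar{X}_{t_0}(\omega^0,\cdot), \ldots, \bar{X}_{t_m}(\omega^0,\cdot); \mathcal{L}^1(\bar{X}_{t_m})(\omega^0), \ldots, \mathcal{L}^1(\bar{X}_{t_M})(\omega^0)\Big),
\]
completing the induction.

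The main delicate point is the freezing/independence argument in the induction step: one must carefully separate the $\mathcal{G}_m$-measurable quantities (the past Euler iterates) from the $\mathcal{G}_m$-independent Gaussian increment $Z_{m+1}$ and from the $\omega^0$-fixed deterministic quantities $Z_{m+1}^0(\omega^0)$ and the conditional laws. The integrability needed to justify the conditional Fubini/freezing argument is guaranteed by Lemma~\ref{lem:property_Phi_m}(i), which ensures $\Phi_{m+1}^{\,\omega^0}$ has $p$-polynomial growth, combined with the $L^p$ bound~\eqref{boundedx} on the Euler scheme from Proposition~\ref{prop:cvg_Euler_scheme}(a).
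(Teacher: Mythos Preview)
Your proof is correct and follows essentially the same approach as the paper: backward induction from $m=M$, tower property with $\mathcal{G}_m\subset\mathcal{G}_{m+1}$, and the freezing/independence argument exploiting that, for fixed $\omega^0$, the quantities $Z_{m+1}^0(\omega^0)$ and $\mathcal{L}^1(\bar{X}_{t_k})(\omega^0)$ are deterministic while $Z_{m+1}$ is independent of $\mathcal{G}_m$. Your additional remarks on integrability (via Lemma~\ref{lem:property_Phi_m}(i) and Proposition~\ref{prop:cvg_Euler_scheme}(a)) are a welcome clarification that the paper leaves implicit.
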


Similarly, for every $\omega^0\in\Omega^0$, we define
\[
\Psi_{m}^{\,\omega^0}: (\RD)^{m+1}\times \big(\mathcal{P}_{p}(\RD)\big)^{M-m+1}\rightarrow \RR, \quad m=0, \ldots, M.
\] 
as follows: 
\begin{align}\label{eq:defPsi1}
&\Psi_{M}^{\,\omega^0}(x_{0: M}; \mu_{M})\coloneqq F(x_{0}, \ldots, x_{M}),\\
&\Psi_{m}^{\,\omega^0}(x_{0: m}; \mu_{m: M}) \coloneqq\big(Q_{m+1}^{\,\omega^0}\Psi_{m+1}^{\,\omega^0}(x_{0: m}, \,\cdot\,; \mu_{m+1: M})\big)\big(x_{m}, \mu_{m}, \theta(t_m, x_{m}, \mu_{m})\big)&\nonumber\\
&=\EE ^1\Big[\Psi_{m+1}^{\,\omega^0}\big(x_{0: m}, x_m+ h\,b(t_m,x_{m}, \mu_{m})+\sqrt{h}\,\theta(t_m,x_{m}, \mu_{m})Z_{m+1}\nonumber\\
&\qquad +\sqrt{h}\,\sigma^0(t_m, \mu_m)Z_{m+1}^0(\omega^0); \mu_{m+1: M}\big)\Big].\nonumber
\end{align}
 By a straightforward adaptation of Lemma~\ref{lem:Phi_conditional}, we have 
\begin{align}
&\Psi_m^{\,\omega^0}\Big(\bar{Y}_{t_0}(\omega^0, \cdot), ..., \bar{Y}_{t_m}(\omega^0, \cdot); \mathcal{L}^1(\bar{Y}_{t_m})(\omega^0),...,\mathcal{L}^1(\bar{Y}_{t_M})(\omega^0)\Big)\nonumber\\
&\hspace{2cm} =\EE^1\Big[F \big(\bar{Y}_{t_0}(\omega^0, \cdot), ..., \bar{Y}_{t_M}(\omega^0, \cdot)\big)\,\Big|\, \mathcal{G}_m\Big].\label{eq:psi-cond-y}
\end{align}

\smallskip
We turn to the proof of Proposition \ref{prop: funconvexEuler-cond}.
\begin{proof}[Proof of Proposition \ref{prop: funconvexEuler-cond}]
Combining Assumption~\ref{Ass:AssumptionII} with Lemmas~\ref{lem:Euler_kernel_order}-~\ref{lem:property_Phi_m} entails that for every $m=0,\ldots, M$, every $\omega^0 \in \Omega^0$, $\Phi_m^{\omega^0} \le \Psi_m^{\omega^0}$, see~\cite[Proposition 4.1]{Liu2023functional} for a detailed argument.  Fix $\omega^0\in\Omega^{0}$.  Lemmas \ref{lem:Euler_kernel_order}, \ref{lem:property_Phi_m}, \ref{lem:Phi_conditional} and \eqref{eq:psi-cond-y} imply that 
\begin{align}
&\EE^1\big[ F(\bar{X}_{0}(\omega^0, \cdot), \ldots, \bar{X}_{M}(\omega^0, \cdot))\big]\nonumber\\
&\qquad=\EE^1 \big[\Phi_{0}^{\,\omega^0}\big(\bar{X}_{0}\,; \,\mathcal{L}^1(\bar{X}_{t_0})(\omega^0), ..., \mathcal{L}^1(\bar{X}_{t_M})(\omega^0)\big)\big]\nonumber\\
&\qquad \leq\EE^1 \big[\Phi_{0}^{\,\omega^0}\big(\bar{Y}_{0}\,; \mathcal{L}^1(\bar{X}_{t_0})(\omega^0), ..., \mathcal{L}^1(\bar{X}_{t_M})(\omega^0)\big)\big]\nonumber\\
&\qquad\leq \EE^1 \big[\Phi_{0}^{\,\omega^0}\big(\bar{Y}_{0}\,; \mathcal{L}^1(\bar{Y}_{t_0})(\omega^0), ..., \mathcal{L}^1(\bar{Y}_{t_M})(\omega^0)\big) \big] \nonumber\\
&\qquad\leq \EE^1 \big[\Psi_{0}^{\,\omega^0}\big(\bar{Y}_{0}\,; \mathcal{L}^1(\bar{Y}_{t_0})(\omega^0), ..., \mathcal{L}^1(\bar{Y}_{t_M})(\omega^0)\big)\big] \nonumber\\
&\qquad=\EE^1\big[F\big(\bar{Y}_{0}(\omega^0, \cdot), \ldots, \bar{Y}_{M}(\omega^0, \cdot)\big)\big]. \nonumber \hfill \qedhere
\end{align}
\end{proof}
\dd

\subsubsection{Proof of Theorem \ref{thm:convex_order_cond}}\label{subsec:proof-thm-conv-order}

The following lemma gives a first form of convergence for the Euler scheme. Its proof is postponed to Appendix~\ref{sec:appendix-A}. 
\begin{lem}\label{lem: 3eme-tentative}
Assume that Assumption~\ref{Ass:AssumptionI} holds with some $p\in[2, +\infty)$. There exist a subsequence $\phi(M)$, $M\geq1$ such that $\phi(M) \to \infty$ as $M \to \infty$ and a subset $\bar{\Omega}^0\subset \Omega^{0}$ such that $\PP^0(\bar{\Omega}^0)=1$ and for every $\omega^0\in \bar{\Omega}^0$,
\[ \EE^{1}\left[\sup_{t\in[0,T]}\left|X_t(\omega^0, \cdot)-\bar{X}_t^{\phi(M)}(\omega^0, \cdot)\right|^p\right]\vee \EE^{1}\left[\sup_{t\in[0,T]}\left|Y_t(\omega^0, \cdot)-\bar{Y}_t^{\phi(M)}(\omega^0, \cdot)\right|^p\right]\xrightarrow{\,M\rightarrow+\infty\,} 0.\quad \]
\end{lem}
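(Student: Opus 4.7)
The plan is to leverage Proposition~\ref{prop:cvg_Euler_scheme}(b) together with Fubini's theorem and the standard fact that $L^1$-convergence implies almost sure convergence along a subsequence. More precisely, Proposition~\ref{prop:cvg_Euler_scheme}(b) applied to both $X$ and $Y$ gives
\[
\mathbb{E}\Big[\sup_{t\in[0,T]}|X_t-\bar X_t^M|^p\Big]\vee\mathbb{E}\Big[\sup_{t\in[0,T]}|Y_t-\bar Y_t^M|^p\Big]\xrightarrow[M\to\infty]{}0,
\]
since the right-hand side of the proposition is of order $h^{(1/2\wedge\rho)}$ in $L^p$ and hence of order $h^{p(1/2\wedge\rho)}$ after taking $p$-th powers of the $L^p$ norm.

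Next, I would introduce, for each $M\geq 1$, the nonnegative $\mathbb{P}^0$-measurable functions
\[
f_M(\omega^0):=\mathbb{E}^1\Big[\sup_{t\in[0,T]}|X_t(\omega^0,\cdot)-\bar X_t^M(\omega^0,\cdot)|^p\Big],\qquad g_M(\omega^0):=\mathbb{E}^1\Big[\sup_{t\in[0,T]}|Y_t(\omega^0,\cdot)-\bar Y_t^M(\omega^0,\cdot)|^p\Big].
\]
A brief check of measurability is in order here (this is the one place where a small technical argument is needed): since $X$ and $\bar X^M$ have continuous trajectories in $t$ and are jointly measurable on the product structure $(\Omega,\mathcal F,\mathbb P)$ as strong solutions of~(\ref{eq:defMKV-CN-X}), the supremum can be taken over a countable dense subset of $[0,T]$, which ensures joint measurability of the integrand; the mapping $\omega^0\mapsto f_M(\omega^0)$ is then well-defined $\mathbb{P}^0$-a.s.\ by Fubini-Tonelli, and similarly for $g_M$.

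By Fubini-Tonelli,
\[
\int_{\Omega^0}f_M\,\d\mathbb{P}^0=\mathbb{E}\Big[\sup_{t\in[0,T]}|X_t-\bar X_t^M|^p\Big]\xrightarrow[M\to\infty]{}0,
\]
and similarly $\int g_M\,\d\mathbb{P}^0\to 0$, so $f_M+g_M\to 0$ in $L^1(\mathbb{P}^0)$. By the standard result that $L^1$-convergence implies the existence of an almost everywhere convergent subsequence, there exists an increasing map $\phi:\mathbb{N}^*\to\mathbb{N}^*$ with $\phi(M)\to\infty$ and a set $\bar\Omega^0\subset\Omega^0$ with $\mathbb{P}^0(\bar\Omega^0)=1$ such that
\[
f_{\phi(M)}(\omega^0)+g_{\phi(M)}(\omega^0)\xrightarrow[M\to\infty]{}0\quad\text{for every }\omega^0\in\bar\Omega^0.
\]
Because $f_{\phi(M)}$ and $g_{\phi(M)}$ are both nonnegative, this yields the claimed conclusion for $\omega^0\in\bar\Omega^0$.

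The main (and essentially only) subtlety is the measurability point mentioned above; the rest of the argument is a direct application of Fubini plus the classical $L^1$-to-a.s.\ subsequence extraction, with the small twist that one must extract a single subsequence that works simultaneously for the $X$- and $Y$-errors, which is handled by applying the extraction to the sum $f_M+g_M$.
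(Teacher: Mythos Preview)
Your proof is correct and follows essentially the same approach as the paper: define the $\omega^0$-wise $\EE^1$-errors, use Fubini together with Proposition~\ref{prop:cvg_Euler_scheme}(b) to get $L^1(\PP^0)$-convergence to zero, and extract an almost-surely convergent subsequence. Your treatment is slightly more careful than the paper's in two respects---you explicitly address measurability and you extract a single subsequence for both $X$ and $Y$ by working with the sum $f_M+g_M$---whereas the paper handles only $X$ and remarks that $Y$ is similar.
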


We now proceed to prove Theorem \ref{thm:convex_order_cond}-(a). While the approach is reminiscent of the one of~\cite[Theorem 1.1]{Liu2023functional}, we provide a detail proof to accommodate the integration of the common noise $B^0$ and ease the reading of the paper.
Recall that $t_{m}^{M}=m\cdot \frac{T}{M}, m=0,  \ldots, M$. We define two interpolators as follows. 

\begin{defn} \label{def:interpolator}
$(i)$ For every integer $M\geq1$, we define the piecewise affine interpolator $i_{M}: x_{0:M}\in(\RD)^{M+1}\mapsto i_{M}(x_{0:M})\in\mathcal{C}([0, T], \RD)$ by 
\begin{flalign}
& \forall\, m=0, \ldots, M-1, \;\forall\, t\in[t_{m}^{M}, t_{m+1}^{M}],\hspace{0.3cm}i_{M}(x_{0:M})(t)=\frac{M}{T}\big[(t_{m+1}^{M}-t)x_{m}+(t-t^{M}_{m})x_{m+1}\big].&\nonumber
\end{flalign}
$(ii)$ For every $M\geq 1$, we define the functional interpolator $I_{M}\!:\! \mathcal{C}\big([0, T], \RD\big)\rightarrow\mathcal{C}\big([0, T], \RD\big)$~by 
\[\forall\, \, \alpha \in \mathcal{C}([0, T], \RD), \hspace{1cm} I_{M}(\alpha)=i_{M}\big(\alpha(t_{0}^{M}), \ldots, \alpha(t_{M}^{M})\big).\]
\end{defn}
\noindent It is obvious that for every $x_{0:M}\in(\RD)^{M+1}$ and $\alpha\in\mathcal{C}([0, T], \RD),$ the following holds
\begin{equation}\label{supinterpolator}
\vertii{i_{M}(x_{0:M})}_{\sup}\!\!= \!\!\max_{0\leq m\leq M}\left|x_{m}\right|\quad\text{and}\quad
 \vertii{I_{M}(\alpha)}_{\sup}\leq\vertii{\alpha}_{\sup}.
\end{equation}
Moreover, for any  $\alpha\in\mathcal{C}([0, T], \RD)$, we have the inequality
\begin{equation}\label{imconv}
\vertii{I_{M}(\alpha)-\alpha}_{\sup}\leq w(\alpha, \tfrac{T}{M}), 
\end{equation}
where $w$ denotes the uniform continuity modulus of $\alpha$. 
The proof of Theorem~\ref{thm:convex_order_cond} relies on the following lemma. 
\begin{lem}[Lemma 2.2 in~\cite{pages2016convex}]\label{Imlemma}
Let $X^{M}, M\geq1$, be a sequence of continuous processes weakly converging towards $X$ as $M\rightarrow +\infty$ for the $\left\Vert\cdot\right\Vert_{\sup}$-norm topology. Then, the sequence of interpolating processes $\widetilde{X}^{M}=I_{M}(X^{M})$, $M\geq 1$ is weakly converging towards $X$ for the $\left\Vert\cdot\right\Vert_{\sup}$-norm topology.
\end{lem}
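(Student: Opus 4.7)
The plan is to reduce the weak convergence statement to an almost sure convergence statement via the Skorokhod representation theorem, and then exploit the $\|\cdot\|_{\sup}$-Lipschitz property of the interpolator $I_M$ together with the uniform continuity of the weak limit $X$.

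First, since $\big(\mathcal{C}([0,T], \mathbb{R}^d), \|\cdot\|_{\sup}\big)$ is a Polish space, I would apply the Skorokhod representation theorem: there exist, on some auxiliary probability space, continuous processes $\tilde X^M$, $\tilde X$ with the same respective laws as $X^M$, $X$ such that $\tilde X^M \to \tilde X$ almost surely for $\|\cdot\|_{\sup}$. As weak convergence depends only on the law, it suffices to prove that $I_M(\tilde X^M) \to \tilde X$ almost surely in $\|\cdot\|_{\sup}$, for then $I_M(X^M) = I_M(\tilde X^M)$ in law converges weakly to $\tilde X$ in law, which is $X$ in law.

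Next, I would use the triangle inequality
$$\|I_M(\tilde X^M) - \tilde X\|_{\sup} \leq \|I_M(\tilde X^M) - I_M(\tilde X)\|_{\sup} + \|I_M(\tilde X) - \tilde X\|_{\sup}.$$
The first term is controlled by the elementary fact that $I_M$ is $1$-Lipschitz for $\|\cdot\|_{\sup}$: indeed, for two paths $\alpha, \beta$, the function $I_M(\alpha)-I_M(\beta)$ coincides with $\alpha-\beta$ at the grid points $t_m^M$ and is linearly interpolated in between, so its sup-norm is attained at a grid point and is bounded by $\|\alpha-\beta\|_{\sup}$. This gives $\|I_M(\tilde X^M) - I_M(\tilde X)\|_{\sup} \leq \|\tilde X^M - \tilde X\|_{\sup} \to 0$ almost surely. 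The second term is bounded using \eqref{imconv} by the uniform continuity modulus $w(\tilde X, T/M)$, which tends to $0$ almost surely because $\tilde X$ is continuous on the compact interval $[0,T]$ and therefore almost surely uniformly continuous.

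The main subtlety — although quite mild — is the $1$-Lipschitz property of $I_M$, which is elementary but is precisely what allows the almost sure convergence of $\tilde X^M$ to $\tilde X$ to pass through the $M$-dependent interpolator. Combining the two bounds above yields $\|I_M(\tilde X^M) - \tilde X\|_{\sup} \to 0$ almost surely, and hence the weak convergence $I_M(X^M) \Rightarrow X$ for the $\|\cdot\|_{\sup}$-topology.
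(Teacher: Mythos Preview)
Your proof is correct: the Skorokhod representation reduces the problem to almost sure convergence, and the decomposition via the $1$-Lipschitz property of $I_M$ (which follows from linearity of $i_M$ together with \eqref{supinterpolator}) and the modulus-of-continuity bound \eqref{imconv} handles both terms cleanly. The paper itself does not supply a proof of this lemma but simply cites it from \cite{pages2016convex}, so there is no in-paper argument to compare against.
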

\begin{proof}[Proof of Theorem~\ref{thm:convex_order_cond}-(a)]
Let $M\in\mathbb{N}^{*}.$ 
Let $(\bar{X}_{t_{m}}^{M})_{m=0, \ldots, M}$ and $(\bar{Y}_{t_{m}}^{M})_{m=0, \ldots, M}$ denote the random variables defined by the Euler schemes (\ref{eq:EulerX}) and~(\ref{eq:EulerY}) and let $\bar{X}^{M}\coloneqq(\bar{X}^{M}_{t})_{t\in[0, T]}$, $\bar{Y}^{M}\coloneqq(\bar{Y}^{M}_{t})_{t\in[0, T]}$  denote the continuous processes defined by  (\ref{eq:continuous_EulerX}) and (\ref{eq:continuous_EulerY}). 
By Proposition~\ref{prop:cvg_Euler_scheme}, there exists a constant $\tilde{C} > 0$ such that 
\begin{align}\label{supxy}
&\vertii{\sup_{t\in[0, T]}\left|\bar{X}_{t}^{M}\right|}_{p}\vee\vertii{\sup_{t\in[0, T]}\left|X_{t}\right|}_{p}\leq \tilde{C}(1+\vertii{X_{0}}_{p})<+\infty,\nonumber\\
&\vertii{\sup_{t\in[0, T]}\left|\bar{Y}_{t}^{M}\right|}_{p}\vee\vertii{\sup_{t\in[0, T]}\left|Y_{t}\right|}_{p}\leq \tilde{C}(1+\vertii{Y_{0}}_{p})<+\infty
\end{align}
as $X_{0}, Y_{0}\in L^{p}(\mathbb{P})$. 
Hence, $F(X)$ and $F(Y)$ are in $L^{1}(\mathbb{P})$ since $F$ has a $p$-polynomial growth.

We define a function $F_{M}: (\RD)^{M+1}\rightarrow \RR$ by 
\begin{equation}
x_{0:M}\in(\RD)^{M+1}\mapsto F_{M}(x_{0:M})\coloneqq F\big(i_{M}(x_{0:M})\big).
\end{equation}
The function $F_{M}$ is obviously convex since $i_{M}$ is a linear application, and has a $p$-polynomial growth (on $(\RD)^{M+1}$) by~(\ref{supinterpolator}).

By definition of the Euler scheme of the interpolators, for every $\omega^0\in \Omega^{0}$, we have \[I_{M}(\bar{X}^{M}(\omega^0, \cdot))=i_{M}\Big(\big(\bar{X}_{t_{0}}^{M}(\omega^0, \cdot ), \ldots, \bar{X}_{t_{M}}^{M}(\omega^0, \cdot )\big)\Big).\] 
Hence, for every $\omega^0\in \Omega^{0}$,
\[F_{M}\Big(\bar{X}_{t_{0}}^{M}(\omega^0, \cdot ), \ldots, \bar{X}_{t_{M}}^{M}(\omega^0, \cdot )\Big)=F\Big(i_{M}\big((\bar{X}_{t_{0}}^{M}(\omega^0, \cdot ), \ldots, \bar{X}_{t_{M}}^{M}(\omega^0, \cdot ))\big)\Big)=F\Big(I_{M}\big(\bar{X}^{M}(\omega^0, \cdot )\big)\Big).\]
By Proposition~\ref{prop: funconvexEuler-cond}, for every $\omega^0\in \Omega^{0}$,
\begin{align}\label{im}
\EE^1&\left[F\Big(I_{M}\big(\bar{X}^{M}(\omega^0, \cdot )\big)\Big)\right]=\EE^1 \left[F\Big(i_{M}\big((\bar{X}_{0}^{M}, \ldots, \bar{X}_{M}^{M})\big)\Big)\right]\nonumber\\
&=\EE^1 \left[F_{M}\big(\bar{X}_{0}^{M}(\omega^0, \cdot ), \ldots, \bar{X}_{M}^{M}(\omega^0, \cdot )\big)\right]\leq \EE^1 \left[F_{M}\big(\bar{Y}_{0}^{M}(\omega^0, \cdot ), \ldots, \bar{Y}_{M}^{M}(\omega^0, \cdot )\big)\right]\nonumber\\
&=\EE^1 \left[F\Big(i_{M}\big((\bar{Y}_{0}^{M}(\omega^0, \cdot ), \ldots, \bar{Y}_{M}^{M}(\omega^0, \cdot ))\big)\Big)\right]=\EE ^1\left[F\Big(I_{M}\big(\bar{Y}^{M}(\omega^0, \cdot )\big)\Big)\right].
\end{align}

The function $F$ is $\vertii{\cdot}_{\sup}$-continuous since it is convex with $\vertii{\cdot}_{\sup}$-polynomial growth (see Lemma 2.1.1 in~\cite{MR2179578}). 
Moreover, Lemma \ref{lem: 3eme-tentative} implies that there exist a subset $\bar{\Omega}^0\subset \Omega^{0}$ such that $\PP^0(\bar{\Omega}^0)=1$  and  a sequence $\phi(M)$ with $\phi(M) \to \infty$ as $M \to \infty$ such that for every $\omega^0\in \bar{\Omega}^0$, 
$\bar{X}^{\phi(M)}(\omega^0, \cdot)$ converges to $X(\omega^0, \cdot)$ weakly.
Then for every $\omega^0\in \bar{\Omega}^0$, $I_{\phi(M)}\big(\bar{X}^{\phi(M)}(\omega^0, \cdot )\big)$ weakly converges  for the sup-norm topology   to $X(\omega^0, \cdot )$ owing to Lemma~\ref{Imlemma}.  This proves that  $F\big(I_{\phi(M)}(\bar{X}^{\phi(M)}(\omega^0, \cdot ))\big)$ weakly converges towards $F(X(\omega^0, \cdot ))$ and, similarly, the weak convergence of   $F\big(I_{\phi(M)}(\bar{Y}^{\phi(M)}(\omega^0, \cdot ))\big)$  towards $F(Y(\omega^0, \cdot ))$. Moreover, as $F$ has a $p$-polynomial growth, we have for every $M\geq1$,
\[
\left| F\Big(I_M\big(\bar{X}^{M}(\omega^0, \cdot )\big)\Big)\right|\leq C\left( 1 + \vertii{I_M\big(\bar{X}^{M}(\omega^0, \cdot )\big)}_{\sup}^{p}\right)\leq C\left( 1 + \vertii{\bar{X}^{M}(\omega^0, \cdot )}_{\sup}^{p}\right)
\]
where the last inequality follows from \eqref{supinterpolator}. By Lemma~\ref{lem: 3eme-tentative}, for every $\omega^0\in \bar{\Omega}^0$,
\[\EE^1 \left[\vertii{\bar{X}^{\phi(M)}(\omega^0, \cdot )}_{\sup}^{p}\right]\rightarrow \EE^1 \left[ \vertii{X(\omega^0, \cdot )}_{\sup}^{p}\right]\quad \text{as}\quad M\rightarrow +\infty. \]
Then one derives that $\EE^1 \big[F\big(I_{\phi(M)}(\bar{X}^{\phi(M)}(\omega^0, \cdot ))\big)\big]\to \EE^1 \big[F(X(\omega^0, \cdot ))\big]$ as $M\to +\infty$ (see e.g. \cite[Theorem 3.5]{billingsley2013convergence}). 
The same reasoning shows that $\EE^1\big[ F\big(I_{\phi(M)}(\bar{Y}^{\phi(M)}(\omega^0, \cdot ))\big)\big]\to \EE^1\big[ F(Y(\omega^0, \cdot ))\big]$.
Finally~\eqref{im} yields
\begin{align}\label{im2}
\forall\,\omega^0\in\bar{\Omega}^0, \qquad \EE^1 F\Big(I_{\phi(M)}\big(\bar{X}^{\phi(M)}(\omega^0, \cdot )\big)\Big)\leq\EE^1 F\Big(I_{\phi(M)}\big(\bar{Y}^{\phi(M)}(\omega^0, \cdot )\big)\Big).
\end{align}
Hence, one derives, for every $\omega^0\in \bar{\Omega}^0\subset \Omega^{0}$,
\[
\EE^1 \big[F(X(\omega^0, \cdot ))\big]\leq \EE^1 \big[F(Y(\omega^0, \cdot ))\big]
\]
letting $M\to +\infty$ in~\eqref{im2}. This concludes the proof as $\mathcal{L}(X(\omega^0, \cdot))$ and $\mathcal{L}(Y(\omega^0, \cdot))$ are  versions of the conditional law of $X$ and $Y$ given $B^0$ (see again \cite[Proposition 2.9 and Remark 2.10]{MR3753660}). 
\end{proof}

We now proceed to prove Theorem~\ref{thm:convex_order_cond}-(b). Similar to  Proposition~\ref{prop: funconvexEuler-cond}, we first establish the following intermediate result.
\begin{prop}\label{convschemG}
Let $\bar{X}_{t_0:t_M}, \bar{Y}_{t_0:t_M}$ and $\mathcal{L}^1(\bar{X}_{t_m}), \mathcal{L}^1(\bar{Y}_{t_m}),\,0\leq m\leq M$ be respectively random vectors and probability distributions defined by \eqref{eq:EulerX} and \eqref{eq:EulerY}. Assume that Assumptions~\ref{Ass:AssumptionI} and \ref{Ass:AssumptionII} hold with some $p\in[2, \infty)$.  For any function 
\[
\tilde{G}: (x_{0:M},\eta_{0:M})\in(\RD)^{M+1}\times \big(\PPRD\big)^{M+1}\:\longmapsto \:\tilde{G}(x_{0:M},\eta_{0:M})\in \RR
\]
satisfying the following conditions (i), (ii) and (iii):
\begin{enumerate}[$(i)$]
\item $\tilde{G}$ is convex in $x_{0:M}$,
\item $\tilde{G}$ is non-decreasing in $\mu_{0:M}$ with respect to the convex order in the sense that 
\begin{flalign}
&\forall\, x_{0:M}\!\in(\RD)^{M+1} \text{ and } \forall\mu_{0:M}, \nu_{0:M}\!\in\big(\mathcal{P}_{p}(\RD)\big)^{M+1}\; \text{ s.t. }\;\mu_{i}\conright\nu_{i},\; 0\leq i \leq M,&\nonumber\\
&\hspace{3cm}\tilde{G}(x_{0:M}, \mu_{0:M})\leq\tilde{G}(x_{0:M}, \nu_{0:M}),&\nonumber
\end{flalign}
\item $\tilde{G}$ has a  $p$-polynomial growth in the sense that 
\begin{flalign*}
&\exists\,  C\in \mathbb{R}_{+}\text{ s.t. } \forall\, (x_{0:M}, \mu_{0:M})\in(\RD)^{M+1}\times \big(\mathcal{P}_{p}(\RD)\big)^{M+1},&\nonumber\\
&\hspace{3cm}\tilde{G}(x_{0:M}, \mu_{0:M})\leq C \Big[1+\sup_{0\leq m\leq M}\left|x_{m}\right|^{p}+\sup_{0\leq m\leq M}\mathcal{W}_{p}^{p}(\mu_{m}, \delta_{0})\Big],&
\end{flalign*}
\end{enumerate}
and for every $\omega^0\in\Omega^{0}$, we have 
\begin{align*}
&\EE^1 \Big[\tilde{G}\big(\bar{X}_{t_0}(\omega^0, \cdot), \ldots, \bar{X}_{t_M}(\omega^0, \cdot), \mathcal{L}^1(\bar{X}_{t_0})(\omega^0), \ldots, \mathcal{L}^1(\bar{X}_{t_M})(\omega^0)\big) \Big] \\
&\quad\leq \EE^1 \Big[\tilde{G}\big(\bar{Y}_{t_0}(\omega^0, \cdot), \ldots, \bar{Y}_{t_M}(\omega^0, \cdot), \mathcal{L}^1(\bar{Y}_{t_0})(\omega^0), \ldots, \mathcal{L}^1(\bar{Y}_{t_M})(\omega^0)\big)\Big].
\end{align*}
\end{prop}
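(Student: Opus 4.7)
The plan is to derive Proposition~\ref{convschemG} as a two-step consequence of the already established Propositions~\ref{prop:marginalEuler} and~\ref{prop: funconvexEuler-cond}, rather than running again a full backward-induction with $\Phi_m^{\,\omega^0}, \Psi_m^{\,\omega^0}$ extended to depend on the measure arguments. The key observation is that, once $\omega^0$ is frozen, the conditional marginals $\mathcal{L}^1(\bar{X}_{t_m})(\omega^0)$ and $\mathcal{L}^1(\bar{Y}_{t_m})(\omega^0)$ are \emph{deterministic} elements of $\mathcal{P}_p(\RD)$; this lets us plug them into the measure slot of $\tilde{G}$ and reduce to a convex functional on the path space alone, which is exactly the setting of Proposition~\ref{prop: funconvexEuler-cond}. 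The monotonicity condition~(ii) together with the marginal convex order then closes the gap between $\mathcal{L}^1(\bar X)$ and $\mathcal{L}^1(\bar Y)$.

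Fix $\omega^0 \in \Omega^0$ and introduce the measure-frozen functional
\[
F^{X,\omega^0}(x_{0:M}) \coloneqq \tilde{G}\bigl(x_{0:M}, \mathcal{L}^1(\bar{X}_{t_0})(\omega^0), \ldots, \mathcal{L}^1(\bar{X}_{t_M})(\omega^0)\bigr).
\]
By condition~(i), $F^{X,\omega^0}$ is convex in $x_{0:M}$. From Proposition~\ref{prop:cvg_Euler_scheme}(a) combined with Fubini's theorem, one gets $\EE^1\bigl[\sup_{t\in[0,T]} |\bar{X}^{M}_t(\omega^0, \cdot)|^p\bigr] < +\infty$ for $\PP^0$-a.e.\ $\omega^0$, hence $\mathcal{W}_p^p(\mathcal{L}^1(\bar{X}_{t_m})(\omega^0), \delta_0) < +\infty$ for all $m$. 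Plugging this into~(iii) yields the upper bound $F^{X,\omega^0}(x_{0:M}) \le C_{\omega^0}\bigl(1 + \sup_m |x_m|^p\bigr)$; the matching lower bound, of the form $F^{X,\omega^0}(x_{0:M}) \ge -C'_{\omega^0}(1 + \sup_m |x_m|)$, follows from the fact that any finite convex function on a Euclidean space is bounded below by an affine function. Thus $F^{X,\omega^0}$ satisfies~\eqref{rpolygrowth2}, and Proposition~\ref{prop: funconvexEuler-cond} applied at this specific $\omega^0$ gives
\[
\EE^1 \bigl[\tilde{G}\bigl(\bar{X}_{t_0:t_M}(\omega^0,\cdot),\, \mathcal{L}^1(\bar X_{t_0:t_M})(\omega^0)\bigr)\bigr] \le \EE^1\bigl[\tilde{G}\bigl(\bar{Y}_{t_0:t_M}(\omega^0,\cdot),\, \mathcal{L}^1(\bar X_{t_0:t_M})(\omega^0)\bigr)\bigr].
\]

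The second step uses Proposition~\ref{prop:marginalEuler}, which ensures $\mathcal{L}^1(\bar{X}_{t_m})(\omega^0) \conright \mathcal{L}^1(\bar{Y}_{t_m})(\omega^0)$ simultaneously for every $m \in \{0, \ldots, M\}$ (pointwise in $\omega^0$, as its proof is carried out $\omega^0$ by $\omega^0$). By condition~(ii), this gives, for every $y_{0:M} \in (\RD)^{M+1}$,
\[
\tilde{G}\bigl(y_{0:M}, \mathcal{L}^1(\bar{X}_{t_0:t_M})(\omega^0)\bigr) \le \tilde{G}\bigl(y_{0:M}, \mathcal{L}^1(\bar{Y}_{t_0:t_M})(\omega^0)\bigr).
\]
Substituting $y_{0:M} = \bar{Y}_{t_0:t_M}(\omega^0, \cdot)$ and taking $\EE^1$ yields a second inequality, and concatenating the two produces the announced bound.

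The main obstacle -- and the only place where something non-trivial happens -- is verifying the two-sided $p$-polynomial growth of $F^{X,\omega^0}$, since condition~(iii) only supplies a one-sided bound; this is precisely why convexity in $x_{0:M}$ is essential (it provides the missing affine minorant). The remaining technicalities are bookkeeping: discarding a $\PP^0$-null set on which $\mathcal{L}^1$ might be ill-defined or the $p$-th moment infinite, and relying on the convention of Subsection~\ref{convention} to extend the inequality to every $\omega^0 \in \Omega^0$.
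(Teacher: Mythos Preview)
Your proof is correct and takes a genuinely different route from the paper's. The paper re-runs the full backward induction of Proposition~\ref{prop: funconvexEuler-cond} with enlarged operators $\bar{\Phi}_m^{\,\omega^0}, \bar{\Psi}_m^{\,\omega^0}$ carrying the whole measure vector $\mu_{0:M}$ as an extra frozen argument, then quotes the analogues of Lemmas~\ref{lem:Euler_kernel_order}--\ref{lem:Phi_conditional}. You instead factor the problem: freeze the measure slot at $\mathcal{L}^1(\bar X_{t_{0:M}})(\omega^0)$ to obtain a convex functional of $x_{0:M}$ alone, invoke Proposition~\ref{prop: funconvexEuler-cond} directly (legitimate since the proposition is a ``$\forall F,\forall \omega^0$'' statement, so $F$ may depend on the chosen $\omega^0$), and then upgrade the measures from $\mathcal{L}^1(\bar X)$ to $\mathcal{L}^1(\bar Y)$ via condition~(ii) and Proposition~\ref{prop:marginalEuler}. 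This is shorter and more transparent, and it makes explicit \emph{why} the monotonicity assumption~(ii) and the marginal result are needed; the paper's approach buries this inside the re-proved analogue of Lemma~\ref{lem:property_Phi_m}. The one technical point you had to supply that the paper's route sidesteps is the two-sided growth bound on $F^{X,\omega^0}$, which you handle correctly via the affine minorant of a finite convex function on $(\RD)^{M+1}$ (finiteness being guaranteed since $\tilde G$ is real-valued by hypothesis).
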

The proof of Proposition~\ref{convschemG} is quite similar to that of Proposition~\ref{prop: funconvexEuler-cond}. 
We just need to replace the definition of $\Phi_{m}^{\,\omega^0}$ and $\Psi_{m}^{\,\omega^0}$ in~(\ref{eq:defPhi1}), (\ref{eq:defPhi2}) and (\ref{eq:defPsi1}) by the following $\bar{\Phi}_{m}^{\,\omega^0}, \bar{\Psi}_{m}^{\,\omega^0}: (\RD)^{m+1}\times\big(\PPRD\big)^{M+1}\rightarrow\RR$,  $m=0, \ldots, M,$ defined by 
\begin{align}
&\forall\, \, (x_{0:m},\mu_{0:M})\in (\RD)^{m+1}\times \big(\PPRD\big)^{M+1},\;\forall\,\omega^0\in\Omega^0,&\nonumber\\
&\quad\quad\bar{\Phi}^{\,\omega^0}_{M}(x_{0:M}\,; \mu_{0:M})=\tilde{G}(x_{0:M}, \mu_{0:M}),&\nonumber\\
&\quad\quad \bar{\Phi}^{\,\omega^0}_{m}(x_{0:m}\,; \mu_{0:M})=\big(Q_{m+1}^{\,\omega^0}\bar{\Phi}^{\,\omega^0}_{m+1}(x_{0:m}, \;\cdot\;\,; \mu_{0:M})\big)\big(x_{m}, \mu_{m}, \sigma_{m}(x_{m}, \mu_{m})\big),&\nonumber \\
&\quad\quad\bar{\Psi}^{\,\omega^0}_{M}(x_{0:M}\,; \mu_{0:M})=\tilde{G}(x_{0:M}, \mu_{0:M}),&\nonumber\\
&\quad\quad \bar{\Psi}^{\,\omega^0}_{m}(x_{0:m}\,; \mu_{0:M})=\big(Q^{\,\omega^0}_{m+1}\bar{\Psi}^{\,\omega^0}_{m+1}(x_{0:m}, \;\cdot\;\,; \mu_{0:M})\big)\big(x_{m}, \mu_{m},\theta_{m}(x_{m}, \mu_{m})\big).\nonumber 
\end{align}

To prove Theorem \ref{thm:convex_order_cond}-(b), we will extend the definition of the interpolator $i_{M}$ (\textit{respectively} $I_{M}$) on the probability distribution space $\big(\PPRD\big)^{M+1}$ (\textit{resp.} $\mathcal{C}\big([0, T], \PPRD\big)$ ) as follows 
\begin{align}\label{eq:def-im-extension}
 &\forall\, m=0, \ldots, M-1, \;\forall\, t\in[t_{m}^{M}, t_{m+1}^{M}], \nonumber\\
&\quad\forall\, \, \mu_{0:M}\in\big(\PPRD\big)^{M+1},\hspace{1.25cm}i_{M}(\mu_{0:M})(t)=\frac{M}{T}\big[(t_{m+1}^{M}-t)\mu_{m}+(t-t^{M}_{m})\mu_{m+1}\big],&\nonumber\\
&\quad\forall\, \, (\mu_{t})_{t\in[0, T]}\in\mathcal{C}\big([0, T], \PPRD\big),\hspace{1cm}I_{M}\big((\mu_{t})_{t\in[0, T]}\big)=i_{M}\big(\mu_{t_{0}^{M}}, \ldots, \mu_{t_{M}^{M}}\big).
\end{align}

 Note that Proposition \ref{prop:cvg_Euler_scheme} implies that
\[\left\Vert \sup_{t\in[0,T]}\Big|\bar{X}_{t}^{M}\Big|\right\Vert_2\leq +\infty\quad \text{and}\quad \left\Vert \sup_{t\in[0,T]}\Big|\bar{Y}_{t}^{M}\Big|\right\Vert_2\leq +\infty,\]
and  that $\bar{X}^{M}=(\bar{X}_{t}^{M})_{t\in[0, T]}$, $\bar{Y}^{M}=(\bar{Y}_{t}^{M})_{t\in[0, T]}$  have continuous paths by construction.  Hence, by applying~\cite[Lemma 2.5]{MR3753660}  there exist a version of each $\mathcal{L}^1(\bar{X}^M_t)$ and $\mathcal{L}^1(\bar{Y}^M_t),\,t\in[0,T]$, denoted by $\bar\mu_t^M$ and $\bar\nu_t^M$ respectively, such that $(\bar\mu_t^M)_{t\in[0,T]}$ and $(\bar\nu_t^M)_{t\in[0,T]}$ have continuous paths in $\mathcal{P}_2(\RD)$. As any discrepancies arise solely on a negligible set, we will henceforth not distinguish between $\mathcal{L}^1(\bar{X}^M_t)$ and $\bar\mu_t^M$ (similarly, between $\mathcal{L}^1(\bar{Y}^M_t)$   and $\bar\nu_t^M$). 

We define now for every $t\in[0, T]$ and for every $\omega^0\in\Omega^{0}$, 
\[\widetilde{\mu}^{M}_{t}(\omega^0)\coloneqq I_{M}\big(\big(\mathcal{L}^1(\bar{X}^M_t)(\omega^0)\big)_{t\in[0, T]}\big)_{t}\quad\text{and}\quad\widetilde{\nu}^{M}_{t}(\omega^0)\coloneqq I_{M}\big(\big(\mathcal{L}^1(\bar{Y}^M_t)(\omega^0)\big)_{t\in[0, T]}\big)_{t} .\] 
Remark that for every $t\in[t_{m}^{M}, t_{m+1}^{M}]$,\, $\widetilde{\mu}^{M}_{t}(\omega^0)$ is the probability distribution of the random variable
\[\widetilde{X}_{t}^{M}(\omega^0, \cdot)\coloneqq \mathbbm{1}_{\left\{U_{m}\leq \frac{M\left(t_{m+1}^{M}-t\right)}{T}\right\}}\bar{X}^{M}_{t_{m}}(\omega^0, \cdot)+\mathbbm{1}_{\left\{U_{m}> \frac{M\left(t_{m+1}^{M}-t\right)}{T}\right\}}\bar{X}^{M}_{t_{m+1}}(\omega^0, \cdot),\]
where $(U_{0}, \ldots, U_{M})$ are  random variables defined on $(\Omega^1, \mathcal{F}^1,\mathbb{P}^1)$, having uniform distribution on $[0,1]$, and independent to the Brownian motion $(B_{t})_{t\in[0, T]}$ and $(X_0,\,Y_0)$. The next lemma, whose proof is given in Appendix~\ref{sec:appendix-A}, establishes the convergence of the interpolated measures.

\begin{lem}\label{lem:cvg-W_p-mu-tilde}
Assume that Assumption~\ref{Ass:AssumptionI} holds with some $p\in[2, +\infty)$. There exist a subset $\bar{\bar{\Omega}}^0\subset \Omega^{0}$ and a subsequence $\phi(M), M\geq1$ such that $\PP^0(\bar{\bar{\Omega}}^0)=1$ and for every $\omega^0\in \bar{\bar{\Omega}}^0$,
\begin{align}\label{eq:cvg_law_tilde}
&\sup_{t\in[0,T]}\mathcal{W}_p\left( \widetilde{\mu}^{\phi(M)}_{t}(\omega^0), \mathcal{L}\big(X_t(\omega^0, \cdot)\big) \right) \vee \sup_{t\in[0,T]}\mathcal{W}_p\left( \widetilde{\nu}^{\phi(M)}_{t}(\omega^0), \mathcal{L}\big(Y_t(\omega^0, \cdot)\big) \right)\rightarrow 0 
\end{align}
as $M\rightarrow +\infty$.
\end{lem}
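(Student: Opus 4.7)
I would decompose the target Wasserstein distance by the triangle inequality, writing, for every $t \in [0,T]$ and every $\omega^0 \in \Omega^0$,
\[
\mathcal{W}_p\bigl(\widetilde{\mu}^M_t(\omega^0), \mathcal{L}(X_t(\omega^0, \cdot))\bigr) \le \mathcal{W}_p\bigl(\widetilde{\mu}^M_t(\omega^0), \mathcal{L}^1(\bar{X}^M_t)(\omega^0)\bigr) + \mathcal{W}_p\bigl(\mathcal{L}^1(\bar{X}^M_t)(\omega^0), \mathcal{L}(X_t(\omega^0, \cdot))\bigr),
\]
and handle the two summands separately. For the second summand, the canonical coupling $(\bar{X}^M_t(\omega^0,\cdot), X_t(\omega^0,\cdot))$ under $\mathbb{P}^1$ yields, uniformly in $t$,
\[
\sup_{t \in [0,T]} \mathcal{W}_p^p\bigl(\mathcal{L}^1(\bar{X}^M_t)(\omega^0), \mathcal{L}(X_t(\omega^0,\cdot))\bigr) \le \mathbb{E}^1\Bigl[\sup_{s \in [0,T]}\bigl|\bar{X}^M_s(\omega^0,\cdot) - X_s(\omega^0,\cdot)\bigr|^p\Bigr],
\]
whose right-hand side vanishes on $\bar{\Omega}^0$ along the subsequence $\phi(M)$ of Lemma \ref{lem: 3eme-tentative}; the same estimate treats $Y$.

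For the first summand, I would exploit the representation of $\widetilde{X}^M_t$ as a randomized interpolation between $\bar{X}^M_{t_m}$ and $\bar{X}^M_{t_{m+1}}$ introduced just above the lemma. This directly produces a $\mathbb{P}^1$-coupling of $\widetilde{\mu}^M_t(\omega^0)$ with $\mathcal{L}^1(\bar{X}^M_t)(\omega^0)$ together with the pathwise bound
\[
\sup_{t\in[0,T]} \bigl|\widetilde{X}^M_t(\omega^0,\cdot) - \bar{X}^M_t(\omega^0,\cdot)\bigr| \le w_{\bar{X}^M(\omega^0,\cdot)}(T/M),
\]
where $w_\alpha(\delta) := \sup_{|s-u|\le\delta}|\alpha_s - \alpha_u|$ denotes the modulus of continuity. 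Taking $p$-th moments and then $\mathbb{E}^0$ gives, via Fubini,
\[
\mathbb{E}^0\Bigl[\sup_{t\in[0,T]} \mathcal{W}_p^p\bigl(\widetilde{\mu}^M_t, \mathcal{L}^1(\bar{X}^M_t)\bigr)\Bigr] \le \mathbb{E}\bigl[w_{\bar{X}^M}(T/M)^p\bigr],
\]
and the matter reduces to showing that the right-hand side tends to $0$ as $M \to \infty$. I would do this through the pathwise inequality $w_{\bar{X}^M}(h) \le w_X(h) + 2\Vert \bar{X}^M - X\Vert_{\sup}$: the first term vanishes in $L^p$ by dominated convergence, since the paths of $X$ are continuous $\mathbb{P}$-almost surely and $w_X(h) \le 2\sup_t |X_t| \in L^p$ by Proposition \ref{prop:cvg_Euler_scheme}(a), while the second vanishes in $L^p$ by Proposition \ref{prop:cvg_Euler_scheme}(b).

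The argument for $Y$ is identical. Convergence in $L^1(\mathbb{P}^0)$ of the four sup-in-$t$ error quantities (two for $X$, two for $Y$) implies convergence in $\mathbb{P}^0$-probability, so I would thin the subsequence $\phi$ of Lemma \ref{lem: 3eme-tentative} once more to secure $\mathbb{P}^0$-almost sure convergence on a common full-measure event $\bar{\bar{\Omega}}^0$; reassembling the two summands through the initial triangle inequality then yields \eqref{eq:cvg_law_tilde}. The main difficulty lies in the second paragraph: a direct BDG-type estimate of $w_{\bar{X}^M}(T/M)$ would produce a bound of order $\sqrt{(T/M)\log M}$ multiplied by $L^p$-norms of the coefficients, which does not close in $L^p$ without moment assumptions on $X_0$ stronger than those in Assumption \ref{Ass:AssumptionI}. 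The indirect argument above transfers the regularity question from $\bar{X}^M$ to the limit process $X$, whose path continuity is already known, thereby circumventing this issue.
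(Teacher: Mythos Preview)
Your proof is correct and follows the same triangle-inequality decomposition as the paper, with the second summand handled identically via Lemma~\ref{lem: 3eme-tentative}. The only difference is in the control of the interpolation error $\sup_t\mathcal W_p(\widetilde\mu^M_t,\mathcal L^1(\bar X^M_t))$: the paper bounds $\mathcal W_p^p$ by $\EE^1[|\bar X^M_t-\bar X^M_{t_m}|^p]+\EE^1[|\bar X^M_t-\bar X^M_{t_{m+1}}|^p]$ and invokes the uniform $L^p$-H\"older estimate $\|\bar X^M_t-\bar X^M_s\|_p\le\kappa\sqrt{t-s}$ from Proposition~\ref{prop:cvg_Euler_scheme}(a), which already gives $\EE[|\bar X^M_t-\bar X^M_{t_m}|^p]\le\kappa^p(T/M)^{p/2}\to 0$ and then extracts a further subsequence. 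So your closing worry is unfounded: the direct route does close under Assumption~\ref{Ass:AssumptionI}, since the constant $\kappa$ already absorbs $\|X_0\|_p$. Your alternative---bounding by the modulus $w_{\bar X^M}(T/M)$ and transferring the regularity question to $X$ via $w_{\bar X^M}(h)\le w_X(h)+2\|\bar X^M-X\|_{\sup}$, then using Proposition~\ref{prop:cvg_Euler_scheme}(b) and dominated convergence---is also correct; it has the merit of making the passage from the pointwise-in-$t$ bound to the sup-in-$t$ bound completely transparent, and of not relying on any explicit H\"older rate for the scheme.
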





\begin{proof}[Proof of Theorem~\ref{thm:convex_order_cond}-(b)]

We define for every $(x_{0:M}, \eta_{0:M})\in(\RD)^{M+1}\times\big(\PPRD\big)^{M+1}$, 
\[G_{M}(x_{0:M}, \eta_{0:M})\coloneqq G\big(i_{M}(x_{0:M}), i_{M}(\eta_{0:M})\big).\]  
Consider now the set $\bar{\bar{\Omega}}^0$ in Lemma \ref{lem:cvg-W_p-mu-tilde} and denote $\bar{\mu}^M_t(\omega^0)=\mathcal{L}\big(\bar{X}^M_t(\omega^0, \cdot)\big)$ to simplify the notation. Then, for every fixed $M\in\mathbb{N}^{*}$ and $\omega^0\in\bar{\bar{\Omega}}^0$, we have
\begin{align}
&\EE^1\left[G\Big(I_{M}\big(\bar{X}^{M}(\omega^0, \cdot)\big), \big(\widetilde{\mu}^{M}_{t}(\omega^0)\big)_{t\in[0, T]}\Big)\right]=\EE\left[ G\Big(I_{M}\big(\bar{X}^{M}(\omega^0, \cdot)\big),I_{M}\big((\bar{\mu}_{t}^{M}(\omega^0))_{t\in[0, T]}\big) \Big)\right]\nonumber\\
&=\EE^1 \left[ G\Big(i_{M}\big(\bar{X}_{t_{0}}^{M}(\omega^0, \cdot), \ldots, \bar{X}_{t_{M}}^{M}(\omega^0, \cdot)\big), i_{M}(\bar{\mu}_{t_{0}}^{M}(\omega^0), \ldots, \bar{\mu}_{t_{M}}^{M}(\omega^0))\Big)\right]\nonumber\\
&=\EE^1 \left[G_{M}\big(\bar{X}_{t_{0}}^{M}(\omega^0, \cdot), \ldots, \bar{X}_{t_{M}}^{M}(\omega^0, \cdot), \bar{\mu}_{0}^{M}(\omega^0), \ldots, \bar{\mu}_{t_{M}}^{M}(\omega^0)\big)\right]\nonumber\\
&\leq \EE^1 \left[ G_{M}\big(\bar{Y}_{t_{0}}^{M}(\omega^0, \cdot), \ldots, \bar{Y}_{t_{M}}^{M}(\omega^0, \cdot), \bar{\nu}_{t_{0}}^{M}(\omega^0), \ldots, \bar{\nu}_{t_{M}}^{M}(\omega^0)\big)\right]\quad\text{(by Proposition~\ref{convschemG})}\nonumber\\
&=\EE^1 \left[G\Big(i_{M}\big(\bar{Y}_{t_{0}}^{M}(\omega^0, \cdot), \ldots, \bar{Y}_{t_{M}}^{M}(\omega^0, \cdot)\big), i_{M}\big(\bar{\nu}_{t_{0}}^{M}(\omega^0), \ldots, \bar{\nu}_{t_{M}}^{M}(\omega^0)\big)\Big)\right]\nonumber\\
&=\EE^1 \left[G\big(I_{M}\big(\bar{Y}^{M}(\omega^0, \cdot)\big), \big(\widetilde{\nu}_{t}^{M}(\omega^0, \cdot)\big)_{t\in[0, T]}\big)\right].\nonumber
\end{align}
The convergences established in Lemma \ref{lem: 3eme-tentative} imply the weak convergence of the random variables 
\[G\Big(I_{\phi(M)}\big(\bar{X}^{\phi(M)}(\omega^0, \cdot)\big), \big(\widetilde{\mu}^{\phi(M)}_{t}(\omega^0)\big)_{t\in[0, T]}\Big)\quad\text{and}\quad G\Big(I_{\phi(M)}\big(\bar{Y}^{\phi(M)}(\omega^0, \cdot)\big), \big(\widetilde{\nu}^{\phi(M)}_{t}(\omega^0)\big)_{t\in[0, T]}\Big).\] 
Then using the continuity assumption on $G$ (see Theorem~\ref{thm:convex_order_cond}-$(b)$-$(iii)$) and the fact that $G$ has at most $p$-polynomial growth in both space and measure arguments, one concludes  as  for claim $(a)$ that 
\[\EE^1\left[ G\Big(I_{\phi(M)}\big(\bar{X}^{\phi(M)}(\omega^0, \cdot)\big), \big(\widetilde{\mu}^{\phi(M)}_{t}(\omega^0)\big)_{t\in[0, T]}\Big)\right] \to   \EE^1\left[ G\big(X (\omega^0, \cdot), \big(\mathcal{L}^1(X_t)(\omega^0)\big)_{t\in [0,T]}\big)\right] \]
when $M\rightarrow +\infty$
(idem for $Y$).  Combining these two properties and the fact that $\mathcal{L}^1(X)$ and $\mathcal{L}^1(Y)$ are  versions of the conditional law of $X$ and $Y$ given $B^0$ (see again \cite[Proposition 2.9 and Remark 2.10]{MR3753660}),  we finally obtain~\eqref{convgpro} by letting $M\rightarrow +\infty$. 
\end{proof}



\subsection{Standard convex order for the particle system}\label{sec: conv-particle-sys}

This subsection focuses on establishing a convex order result for a particle system associated with a McKean-Vlasov equation with common noise. To achieve this, we strengthen some of the previously stated assumptions as detailed below.

\begin{manualtheorem}{IV}\label{Ass:Assumption-standard-cv-new}
$(1)$ We have $d=q$ and there exist two functions $\mathfrak{b}, \zeta$ 
defined on $[0,T]\times \RD\times \RD$ and respectively valued in $\RD$, $\mathbb{M}_{d\times d}$ such that, for every $t\in [0,T]$, $x\in \RD$, $\mu\in \calP_p(\RD)$,
\begin{equation*}
    b(t,x,\mu) = \int_{\RD} \mathfrak{b}(t,x,y)\mu(dy), \quad\sigma(t,x,\mu) = \int_{\RD} \zeta(t,x,y)\mu(dy).
\end{equation*}
Moreover, 
the function $\mathfrak{b}$ is affine in $(x,y)$, and the function $\zeta$  is convex in $(x,y)$ with respect to the partial matrix order \eqref{eq:def_matrix_partial_order}.

\noindent $(2)$  For every $(t, x, \mu)\in [0,T]\times \mathbb{R}^{d}\times\mathcal{P}_{p}(\mathbb{R}^{d})$, we have $b(t,x, \mu)=\beta(t,x,\mu)$, $\sigma(t, x, \mu)\preceq \theta(t, x, \mu)$.

\noindent $(3)$  There exist $\bar{\sigma}^0, \bar{\theta}^0: [0,T]\rightarrow \R$ such that for every $t\in[0,T]$, $|\bar{\sigma}^0(t)|\leq |\bar{\theta}^0(t)|$, and for every  $(t, \mu)\in [0,T]\times\mathcal{P}_{p}(\mathbb{R}^{d})$, we have $\sigma^0(t,\mu)=\bar{\sigma}^0(t) \mathbf{I}_d$ and $\theta^0(t,\mu)=\bar{\theta}^0(t)\mathbf{I}_d$. 

\noindent $(4)$ $X_0\conright Y_0$.
\end{manualtheorem}

The main result of this subsection is the following. 

\begin{prop}[Functional convex order for the particle system]\label{prop:standard-conv-order-particle-sys-new}
Let $(X^{1,N}, ..., X^{N,N})$ be the particle system defined by \eqref{eq:def-particle-system-X} and let $(Y^{1,N}, ..., Y^{N,N})$ be the particle system defined by the following SDE
\begin{equation}\label{eq:def-particle-system-Y}
\d Y_t^{n,N}=\beta\big(t,Y_t^{n,N}, \nu^N_{t}\big)\d t + \theta\big(t,Y_t^{n,N}, \nu^N_{t}\big)\d B_t^n + \theta^0\big(t, \nu^N_t \big) \d B_t^0, \quad 1\leq n\leq N, 
\end{equation}
where $(Y_0^{1, N}, ..., Y_0^{N, N})$ are i.i.d. random variables defined on $(\Omega^1, \calF^1, \PP^1)$ having the same distribution as $Y_0$ given by~\eqref{eq:defMKV-CN-Y} and $\nu_t^{N}=\frac{1}{N}\sum_{n=1}^N \delta_{Y_t^{n,N}}, \,t\in[0,T]$. 
Assume that Assumptions~\ref{Ass:AssumptionI} and~\ref{Ass:Assumption-standard-cv-new} hold with some $p\in[2, +\infty)$. 
Then, for every fixed $N\geq 1$ and for every  convex function $F:\left(\mathcal{C}([0,T], (\R^d)^N), \|\cdot\|_{\sup}\right) \to \R$ with $p$-polynomial growth, one has
\begin{equation} \label{eq:conv_order_particle_system}
    \EE\Big[F(X^{1,N},\ldots,X^{N,N})\Big] \leq \EE\Big[F(Y^{1,N},\ldots,Y^{N,N})\Big].
\end{equation}
\end{prop}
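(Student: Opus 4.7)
The plan is to reformulate the two particle systems as standard (non-McKean--Vlasov) SDEs in the enlarged state space $\R^{Nd}$ and then deploy the Euler-scheme machinery of Subsection~\ref{subsec:convex-order-Euler} without its conditional-on-$B^0$ subtleties. Writing $\mathbf{X}_t := (X^{1,N}_t, \ldots, X^{N,N}_t) \in \R^{Nd}$ and $\mathbf{W}_t := (B^1_t, \ldots, B^N_t, B^0_t) \in \R^{(N+1)d}$, Assumption~\ref{Ass:Assumption-standard-cv-new}-(1) rewrites~\eqref{eq:def-particle-system-X} as
\[
d\mathbf{X}_t = \mathbf{B}(t, \mathbf{X}_t)\, dt + \mathbf{S}^X(t, \mathbf{X}_t)\, d\mathbf{W}_t,
\]
with $\mathbf{B}(t,\mathbf{x})_n = \tfrac{1}{N}\sum_j \mathfrak{b}(t,x_n,x_j)$ and $\mathbf{S}^X(t,\mathbf{x})$ the block matrix whose $(n,m)$-block is $\mathbf{1}_{n=m}\tfrac{1}{N}\sum_j \zeta(t,x_n,x_j)$ for $1\le m\le N$ and whose $(N{+}1)$-th block column consists of $N$ stacked copies of $\bar\sigma^0(t)\mathbf{I}_d$; similarly for $\mathbf{Y}$ with diffusion $\mathbf{S}^Y$.

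I would then verify the three structural conditions needed to propagate convexity through a backward Euler induction in $\R^{Nd}$: first, $\mathbf{B}(t,\cdot)$ is affine in $\mathbf{x}$ (direct from $\mathfrak{b}$ affine in $(x,y)$); second, $\mathbf{S}^X(t,\cdot)$ is convex in $\mathbf{x}$ with respect to the matrix partial order~\eqref{eq:def_matrix_partial_order}, to be inherited from the $\preceq$-convexity of $\zeta$ through the block-diagonal embedding (the non-trivial blocks being $s_n(\mathbf{x}) = \tfrac{1}{N}\sum_j \zeta(t,x_n,x_j)$, the common-noise column being constant in $\mathbf{x}$); third, the domination $\mathbf{S}^X(t,\mathbf{x}) \preceq \mathbf{S}^Y(t,\mathbf{x})$, which follows from the block identity
\[
\bigl(\mathbf{S}^Y(\mathbf{S}^Y)^T - \mathbf{S}^X(\mathbf{S}^X)^T\bigr)_{n,m} = \mathbf{1}_{n=m}\bigl(\theta\theta^T - \sigma\sigma^T\bigr)(t,x_n,\mu^N) + \bigl(\bar\theta^0(t)^2 - \bar\sigma^0(t)^2\bigr)\mathbf{I}_d,
\]
which is the sum of a PSD block-diagonal matrix (from $\sigma\preceq\theta$, Assumption~\ref{Ass:Assumption-standard-cv-new}-(2)) and the PSD matrix $c(t)\mathbf{1}_{N\times N}\otimes\mathbf{I}_d$ with $c(t) = \bar\theta^0(t)^2 - \bar\sigma^0(t)^2 \ge 0$ (from $|\bar\sigma^0(t)| \le |\bar\theta^0(t)|$, Assumption~\ref{Ass:Assumption-standard-cv-new}-(3)).

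For the initial condition, an application of Strassen's theorem turns $X_0 \conright Y_0$ into a martingale coupling $(\tilde X_0, \tilde Y_0)$; its $N$-fold i.i.d. tensorization delivers a martingale coupling realizing $(X_0^{1,N}, \ldots, X_0^{N,N}) \conright (Y_0^{1,N}, \ldots, Y_0^{N,N})$ in $\R^{Nd}$. With these three ingredients in place, I would re-run the backward induction of Section~\ref{subsec:convex-order-Euler} in $\R^{Nd}$: the operators $Q_{m+1}$ become pure Gaussian convolutions (no measure argument), and the $\R^{Nd}$-valued analogues of Lemmas~\ref{lem:Euler_kernel_order}-\ref{lem:Euler_fun_con} yield the termwise inequality $\Phi_m \le \Psi_m$ between the backward functions constructed from the discretized cost $F_M$. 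This produces $\EE[F_M(\bar{\mathbf{X}}^M_{0:M})] \le \EE[F_M(\bar{\mathbf{Y}}^M_{0:M})]$ for every convex $F_M$ on $(\R^{Nd})^{M+1}$ with $p$-polynomial growth; one then passes to the limit exactly as in the proof of Theorem~\ref{thm:convex_order_cond}-(a) via the piecewise-affine interpolator of Definition~\ref{def:interpolator}, the $L^p$-convergence of the Euler scheme for Lipschitz SDEs (analogous to Proposition~\ref{prop:cvg_Euler_scheme}), and the $\|\cdot\|_{\sup}$-continuity of convex $p$-polynomial-growth functionals.

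The main technical obstacle is the verification that $\mathbf{S}^X$ is $\preceq$-convex in $\mathbf{x}$: because $\preceq$ compares the quadratic form $\mathbf{S}\mathbf{S}^T$ rather than $\mathbf{S}$ itself, naive summation of $\preceq$-convex matrix-valued maps does not preserve $\preceq$-convexity, so one must carefully exploit the block-diagonal structure of $\mathbf{S}^X$ in order to reduce the required inequality at each diagonal block to the assumed $\preceq$-convexity of $\zeta$. The restrictive form of $\sigma^0, \theta^0$ imposed by Assumption~\ref{Ass:Assumption-standard-cv-new}-(3) --- scalar multiples of $\mathbf{I}_d$ --- is precisely what makes the common-noise contribution to $\mathbf{S}^X(\mathbf{S}^X)^T$ both $\mathbf{x}$-independent and of the simple rank-structured form $\bar\sigma^0(t)^2\,\mathbf{1}_{N\times N}\otimes\mathbf{I}_d$, thereby avoiding $\mathbf{x}$-dependent off-diagonal cross-terms that would otherwise spoil the matrix comparison.
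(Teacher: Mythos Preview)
Your proposal is correct and follows essentially the same approach as the paper: both reformulate the two particle systems as standard $(\R^d)^N$-valued SDEs driven by the $(N{+}1)d$-dimensional Brownian motion $(B^1,\ldots,B^N,B^0)$, verify that the drift is affine, that the diffusion matrix $\Sigma$ is $\preceq$-convex, that $\Sigma\preceq\Theta$ (via the same block computation you wrote out, formalized in the paper as Lemma~\ref{lem:matrix-order-block}), and that the initial conditions are convex-ordered (you via Strassen and tensorization, the paper via \cite[Theorems~3.A.12 and~7.A.4]{shaked2007stochastic}). The only substantive difference is the final step: once the structural hypotheses are in place, the paper directly invokes \cite[Theorem~1.1]{Liu2023functional} for standard diffusions as a black box, whereas you propose to re-run the Euler backward-induction machinery of Subsection~\ref{subsec:convex-order-Euler} in $\R^{Nd}$. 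Both routes are valid; the paper's is simply more economical since the cited theorem already packages the Euler discretization, the convexity propagation through the operators $Q_{m+1}$, and the passage to the continuous-time limit.
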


Under Assumptions \ref{Ass:AssumptionI} and \ref{Ass:Assumption-standard-cv-new}, the proof of Proposition~\ref{prop:standard-conv-order-particle-sys-new} relies on the following Lemma \ref{lem:prop_phi} and Lemma \ref{lem:matrix-order-block} whose proofs are postponed in Appendix \ref{sec:appendix-A}.  Let \( f : [0,T] \times \mathbb{R}^d \times \mathbb{R}^d \rightarrow S \)  be a given function. For a fixed $i\in\{1, ..., N\}$, we define an operator \( \phi^i \), which maps \( f \) to a function \( \phi_f^i : [0,T] \times (\mathbb{R}^d)^N \rightarrow S \), by
\begin{equation}
    (t, \bm{x}=(x^1,\ldots,x^N))\in[0,T]\times (\RD)^N \longmapsto \phi_f^i(t,\bm{x}) = \frac{1}{N}\sum_{j=1}^N f(t,x^i,x^j) \in S.
\end{equation}

\begin{lem}\label{lem:prop_phi} Assume that Assumption \ref{Ass:Assumption-standard-cv-new} holds. Then for a fixed $t\in[0,T]$ and $i\in\{1, ..., N\}$,
\begin{enumerate}[$(i)$]
\item The function $\bm x\mapsto \phi^i_{\mathfrak{b}}(t,\bm x)$ is affine; 
\item The function $\bm x\mapsto \phi^i_{\zeta}(t,\bm x)$ is convex w.r.t. the matrix partial order \eqref{eq:def_matrix_partial_order}. 
\end{enumerate}
\end{lem}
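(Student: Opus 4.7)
The plan is to prove each claim by direct algebraic computation on the integral Vlasov structure of the coefficients; neither the stochastic nor the measure-theoretic framework plays any role here, so this lemma is a purely algebraic statement.

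For (i), I use the affineness of $\mathfrak{b}$ in $(x,y)$ from Assumption~\ref{Ass:Assumption-standard-cv-new}(1) to write $\mathfrak{b}(t,x,y)=A(t)x+C(t)y+e(t)$ for measurable $A(t),C(t)\in \mathbb{M}_{d,d}$ and $e(t)\in\R^d$. Substituting into the definition of $\phi^i_{\mathfrak b}$ and carrying out the average over $j$ (which only affects the $y$-slot) yields
\[
\phi^i_{\mathfrak b}(t,\bm x)=A(t)\,x^i+C(t)\,\bar{\bm x}+e(t),
\]
where $\bar{\bm x}:=\tfrac{1}{N}\sum_{j=1}^N x^j$; this expression is manifestly affine in $\bm x$.

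For (ii), I fix $\bm u,\bm v\in(\RD)^N$ and $\lambda\in[0,1]$ and proceed in two steps. First, the convexity of $\zeta$ in $(x,y)$ with respect to $\preceq$ is applied pointwise in $j$, producing $N$ matrix inequalities of the form
\[
\zeta\bigl(t,\lambda u^i+(1-\lambda)v^i,\lambda u^j+(1-\lambda)v^j\bigr)\preceq \lambda\,\zeta(t,u^i,u^j)+(1-\lambda)\,\zeta(t,v^i,v^j).
\]
Second, I average these inequalities over $j=1,\ldots,N$ to conclude $\phi^i_\zeta(t,\lambda\bm u+(1-\lambda)\bm v)\preceq \lambda\phi^i_\zeta(t,\bm u)+(1-\lambda)\phi^i_\zeta(t,\bm v)$.

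The main obstacle is the averaging step in (ii), because the partial matrix order $\preceq$ of \eqref{eq:def_matrix_partial_order} is not stable under arbitrary matrix sums: from $A_1\preceq B_1$ and $A_2\preceq B_2$ one cannot in general conclude $A_1+A_2\preceq B_1+B_2$, owing to the cross terms in the expansion of $(A_1+A_2)(A_1+A_2)^\top$. I therefore expect to handle this step by exploiting the special structure of $\phi^i_\zeta$ — the \emph{common} first argument $x^i$ in every summand $\zeta(t,x^i,x^j)$, together with the uniform scalar weights $1/N$ — and by verifying the matrix inequality directly on the quadratic form $z\mapsto z^\top \phi^i_\zeta(\phi^i_\zeta)^\top z$ rather than at the matrix level. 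Once this averaging step is justified, the claim follows immediately.
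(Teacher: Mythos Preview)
Your proof of $(i)$ is correct and identical to the paper's.

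For $(ii)$, your two-step plan (apply the $\preceq$-convexity of $\zeta$ at each $j$, then average over $j$) is exactly the paper's argument; the paper simply writes
\[
\frac{1}{N}\sum_{j=1}^N \zeta\bigl(t,\lambda x^i+(1-\lambda)y^i,\lambda x^j+(1-\lambda)y^j\bigr)\ \preceq\ \frac{1}{N}\sum_{j=1}^N\Bigl[\lambda\,\zeta(t,x^i,x^j)+(1-\lambda)\,\zeta(t,y^i,y^j)\Bigr]
\]
without further comment. Your concern about this averaging step is well founded: the order of \eqref{eq:def_matrix_partial_order} is genuinely not additive (e.g.\ in dimension one, $A_1=0,\,B_1=1,\,A_2=1,\,B_2=-1$ give $A_j\preceq B_j$ for $j=1,2$ but $A_1+A_2=1\not\preceq 0=B_1+B_2$), so summing termwise $\preceq$-inequalities is not in general legitimate. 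However, your proposal does not close this gap. The remedy you sketch --- pass to the quadratic form $z\mapsto z^\top M M^\top z$ and exploit the common first argument $x^i$ together with the uniform weights $1/N$ --- fails for the same reason: from $|A_j^\top z|\le|B_j^\top z|$ for each $j$ one only obtains $\bigl|\sum_j A_j^\top z\bigr|\le\sum_j|B_j^\top z|$, which can strictly exceed $\bigl|\sum_j B_j^\top z\bigr|$ whenever the vectors $B_j^\top z$ partially cancel, and neither the common first slot nor the equal weights prevents that cancellation. So part $(ii)$ of your write-up remains incomplete; the paper's proof glosses over precisely the step you flag and supplies no additional argument for it.
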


\begin{lem}\label{lem:matrix-order-block}
Let $A_n, B_n\in\mathbb{M}_{d\times d}, 1\leq n\leq N$ such that for every fixed $n\in\{1, ..., N\}$,  $A_n\preceq B_n$, and let $\bar{\sigma}_0, \bar{\theta}_0\in \R$ such that $ |\bar{\sigma}_0|\leq |\bar{\theta}_0|$.  We define  
\begin{align}
&\bm{A}\coloneqq \begin{pmatrix}
A_1 & 0 & \cdots & 0 &  \bar{\sigma}_0 \mathbf{I}_d\\
0& A_2& \cdots &0&\bar{\sigma}_0 \mathbf{I}_d\\
\vdots &\vdots& \ddots & \vdots &\vdots \\
0& 0  &\cdots & A_N & \bar{\sigma}_0 \mathbf{I}_d
\end{pmatrix}, \quad 
\bm{B}\coloneqq\begin{pmatrix}
B_1 & 0 & \cdots & 0 &  \bar{\theta}_0 \mathbf{I}_d\\
0& B_2& \cdots &0&\bar{\theta}_0 \mathbf{I}_d\\
\vdots &\vdots& \ddots & \vdots &\vdots \\
0& 0  &\cdots & B_N & \bar{\theta}_0 \mathbf{I}_d
\end{pmatrix}.\nonumber
\end{align}
Then we have $\bm{A}\preceq \bm{B}$.
\end{lem}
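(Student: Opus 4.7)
The plan is to reduce the claim $\bm{A}\preceq\bm{B}$ to a direct computation of the $Nd\times Nd$ matrix $\bm{B}\bm{B}^\top-\bm{A}\bm{A}^\top$ and to exhibit it as a sum of two positive semi-definite matrices. First I would expand the block products: since the first $N$ block-columns of $\bm{A}$ form $\mathrm{diag}(A_1,\ldots,A_N)$ and the last block-column is the constant stack $(\bar{\sigma}_0\mathbf{I}_d,\ldots,\bar{\sigma}_0\mathbf{I}_d)^\top$, a direct block multiplication gives
\[ \bm{A}\bm{A}^\top \;=\; \mathrm{diag}\bigl(A_1A_1^\top,\ldots,A_NA_N^\top\bigr) \;+\; \bar{\sigma}_0^2\,\bm{J}_N, \]
where $\bm{J}_N$ denotes the $Nd\times Nd$ block matrix whose every $d\times d$ block equals $\mathbf{I}_d$. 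The analogous formula holds for $\bm{B}\bm{B}^\top$ with $B_n$ and $\bar{\theta}_0$ in place of $A_n$ and $\bar{\sigma}_0$.

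Subtracting then yields the decomposition
\[ \bm{B}\bm{B}^\top-\bm{A}\bm{A}^\top \;=\; \mathrm{diag}\bigl(B_nB_n^\top-A_nA_n^\top\bigr)_{n=1}^N \;+\; \bigl(\bar{\theta}_0^2-\bar{\sigma}_0^2\bigr)\,\bm{J}_N. \]
The first summand is block-diagonal whose $n$-th block $B_nB_n^\top-A_nA_n^\top$ is positive semi-definite by the hypothesis $A_n\preceq B_n$, so the summand is itself positive semi-definite. For the second summand, I would observe that $\bm{J}_N=\bm{e}\bm{e}^\top$, where $\bm{e}\in\mathbb{M}_{Nd\times d}$ is the vertical stack of $N$ copies of $\mathbf{I}_d$; hence $\bm{J}_N\succeq 0$. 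Combined with the inequality $\bar{\theta}_0^2-\bar{\sigma}_0^2\ge 0$, which follows from $|\bar{\sigma}_0|\le|\bar{\theta}_0|$, the second summand is also positive semi-definite. Since the sum of two positive semi-definite matrices is positive semi-definite, we conclude $\bm{B}\bm{B}^\top-\bm{A}\bm{A}^\top\succeq 0$, i.e.\ $\bm{A}\preceq\bm{B}$.

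The only mildly subtle point in this argument is the appearance of the off-diagonal contributions $\bar{\sigma}_0^2\mathbf{I}_d$ in $\bm{A}\bm{A}^\top$ produced by the common last block-column: these couplings do not cancel block by block and would obstruct a purely block-diagonal analysis. Recognising them globally as the rank-$d$ outer product $\bar{\sigma}_0^2\,\bm{e}\bm{e}^\top$ is the key observation that makes positive semi-definiteness transparent. Once this identification is made, the proof reduces to applying the definition of the matrix partial order and requires no additional tool.
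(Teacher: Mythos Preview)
Your proof is correct and follows essentially the same approach as the paper: both compute $\bm{B}\bm{B}^\top-\bm{A}\bm{A}^\top$ explicitly and split it as the sum of the block-diagonal part $\mathrm{diag}(B_nB_n^\top-A_nA_n^\top)$ and the rank-$d$ part $(\bar{\theta}_0^2-\bar{\sigma}_0^2)\bm{J}_N$. The only cosmetic difference is that the paper identifies $\bm{J}_N$ as the Kronecker product $J_N\otimes\mathbf{I}_d$ of two positive semi-definite matrices, whereas you use the equivalent outer-product factorisation $\bm{J}_N=\bm{e}\bm{e}^\top$.
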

\begin{proof}[Proof of Proposition \ref{prop:standard-conv-order-particle-sys-new}]
We begin by reformulating the particle systems defined in \eqref{eq:def-particle-system-X} and \eqref{eq:def-particle-system-Y} as diffusion processes taking values in \( (\mathbb{R}^d)^N \). Denote the vector-valued processes
\[
\bm{X}^N_t = (X_t^{1,N}, \dots, X_t^{N,N}) \quad \text{and} \quad \bm{Y}^N_t = (Y_t^{1,N}, \dots, Y_t^{N,N}), \quad t \in [0, T].
\]
Then, the systems \eqref{eq:def-particle-system-X} and \eqref{eq:def-particle-system-Y} can be equivalently written as the following stochastic differential equations:
\begin{align}
\d \bm{X}^N_t &= B(t, \bm{X}^N_t)\, \d t + \Sigma(t, \bm{X}^N_t)\, \d \bm{B}_t, \nonumber \\
\d \bm{Y}^N_t &= B(t, \bm{Y}^N_t)\, \d t + \Theta(t, \bm{Y}^N_t)\, \d \bm{B}_t, \nonumber
\end{align}
where $B : [0,T] \times (\R^d)^N \rightarrow (\R^d)^N$, $\Sigma, \Theta : [0,T] \times (\R^{d})^N \rightarrow \mathbb{M}_{Nd\times (N+1)d}$ are defined for every $(t, \bm x=(x^1, ..., x^N))\in[0,T]\times (\RD)^N$ by 
\begin{align}
& B(t, \bm x)= \begin{pmatrix}
\frac{1}{N}\sum_{j=1}^N\mathfrak{b}(t, x^1, x^j) \\
\vdots\\
\frac{1}{N}\sum_{j=1}^N\mathfrak{b}(t, x^N, x^j)\\
\end{pmatrix}_{dN\times 1}\hspace{-0.5cm},\nonumber\\
& \Sigma(t, \bm x)= \begin{pmatrix}
\frac{1}{N}\sum_{j=1}^N\zeta(t, x^1, x^j) &  && \bar{\sigma}^0(t) \mathbf{I}_d\\
&\ddots&&\vdots\\
&&\frac{1}{N}\sum_{j=1}^N\zeta(t, x^N, x^j)&\bar{\sigma}^0(t) \mathbf{I}_d\\
\end{pmatrix}_{Nd\times (N+1)d}\hspace{-1.2cm}, \nonumber\\
&\Theta(t, \bm x)= \begin{pmatrix}
\theta(t,x^1, \frac{1}{N}\sum_{j=1}^N\delta_{x^j}) &&& \bar{\theta}^0(t) \mathbf{I}_d\\
&\ddots&&\vdots\\
&&\theta(t,x^N, \frac{1}{N}\sum_{j=1}^N\delta_{x^j})&\bar{\theta}^0(t) \mathbf{I}_d
\end{pmatrix}_{Nd\times (N+1)d}\nonumber\hspace{-1.2cm},
\end{align}
and $\bm B=(B^1, ..., B^N, B^0)^T$ is an $\R^{(N+1)d}$-valued Brownian motion.   Note that, under Assumption \ref{Ass:AssumptionI}, the functions $B, \Sigma$ and $\Theta$ are Lipschitz continuous in $\bm x$ (see, e.g., \cite[Lemma 3.2]{lacker2018mean}). 

First, observe that for any fixed \( t \in [0, T] \), the function \( \bm{x} \mapsto B(t, \bm{x}) \) is affine in \( \bm{x} \), by applying Lemma~\ref{lem:prop_phi}. Moreover, it follows from \cite[Theorems 3.A.12 and 7.A.4]{shaked2007stochastic} that
\[
\bm{X}_0^N = (X_0^{1,N}, \dots, X_0^{N,N}) \preceq_{\mathrm{cv}} \bm{Y}_0^N = (Y_0^{1,N}, \dots, Y_0^{N,N}),
\]
since \( (X_0^{1,N}, \dots, X_0^{N,N}) \stackrel{\text{i.i.d.}}{\sim} X_0 \), \( (Y_0^{1,N}, \dots, Y_0^{N,N}) \stackrel{\text{i.i.d.}}{\sim} Y_0 \), and \( X_0 \preceq_{\mathrm{cv}} Y_0 \). Furthermore, Assumption~\ref{Ass:Assumption-standard-cv-new} and Lemma~\ref{lem:matrix-order-block} imply that for every fixed \( (t, \bm{x} = (x^1, \dots, x^N)) \in [0,T] \times (\mathbb{R}^d)^N \), we have \( \Sigma(t, \bm{x}) \preceq \Theta(t, \bm{x}) \).

Next, we establish that for every fixed \( t \in [0,T] \), the function \( \bm{x} \mapsto \Sigma(t, \bm{x}) \) is convex in \( \bm{x} \) with respect to the partial matrix order defined in \eqref{eq:def_matrix_partial_order}. 
Let \( \bm{x} = (x^1, \dots, x^N) \), \( \bm{y} = (y^1, \dots, y^N) \) in \( (\mathbb{R}^d)^N \), and let \( \lambda \in [0,1] \). We aim to prove that
\[
\Sigma(t, \lambda \bm{x} + (1 - \lambda) \bm{y}) \preceq \lambda \Sigma(t, \bm{x}) + (1 - \lambda) \Sigma(t, \bm{y}).
\]
Indeed, we have
\[
\Sigma\big(t, \lambda \bm{x} + (1 - \lambda) \bm{y}\big) =
\begin{pmatrix}
A_1 & & & \bar{\sigma}_0(t) \mathbf{I}_d \\
& \ddots & & \vdots \\
& & A_N & \bar{\sigma}_0(t) \mathbf{I}_d
\end{pmatrix},
\]
where for each \( n = 1, \dots, N \),
\[
A_n = \frac{1}{N} \sum_{j=1}^N \zeta\left(t, \lambda x^n + (1 - \lambda) y^n,\; \lambda x^j + (1 - \lambda) y^j\right).
\]
On the other hand,
\[
\lambda \Sigma(t, \bm{x}) + (1 - \lambda) \Sigma(t, \bm{y}) =
\begin{pmatrix}
B_1 & & & \bar{\sigma}_0(t) \mathbf{I}_d \\
& \ddots & & \vdots \\
& & B_N & \bar{\sigma}_0(t) \mathbf{I}_d
\end{pmatrix},
\]
where 
\[
B_n = \frac{\lambda}{N} \sum_{j=1}^N \zeta(t, x^n, x^j) + \frac{1-\lambda}{N} \sum_{j=1}^N \zeta(t, y^n, y^j).
\]
It follows from Lemma \ref{lem:prop_phi} that for each $n\in\{1, ..., N\}$,  $A_n\preceq B_n$, and applying Lemma~\ref{lem:matrix-order-block}, we conclude that
\[
\Sigma(t, \lambda \bm{x} + (1 - \lambda) \bm{y}) \preceq \lambda \Sigma(t, \bm{x}) + (1 - \lambda) \Sigma(t, \bm{y}),
\]
as claimed.

In conclusion, we may apply \cite[Theorem 1.1]{Liu2023functional}, and deduce that for every \( N \geq 1 \), and for every  convex functional
\[
F: \left(\mathcal{C}([0,T], (\mathbb{R}^d)^N), \|\cdot\|_{\sup} \right) \to \mathbb{R}
\]
with at most polynomial growth of order \( p \), one has
\[\mathbb{E}\left[F(X^{1,N}, \ldots, X^{N,N})\right] \leq \mathbb{E}\left[F(Y^{1,N}, \ldots, Y^{N,N})\right]. \hfill\qedhere\]
\end{proof}

\begin{rem}\label{rem:change-sigma-i}
In the proof of Proposition \ref{prop:standard-conv-order-particle-sys-new}, the convexity of the map \( \bm{x} \mapsto \Sigma(t, \bm{x}) \)  and the condition \( \Sigma \preceq \Theta \) are both ensured by Assumption \ref{Ass:Assumption-standard-cv-new}.  
However, Proposition \ref{prop:standard-conv-order-particle-sys-new} can also be established under a different setting.
For example, one may consider a particle system with distinct diffusion coefficient functions for each particle,
\begin{equation}
\d X_t^{n,N}=b\big(t,X_t^{n,N}, \mu^N_{t}\big)\d t + \sigma^n\big(t,X_t^{n,N}\big)\d B_t^n + \sigma^0\d B_t^0, \quad 1\leq n\leq N, \nonumber
\end{equation}
in which case Proposition \ref{prop:standard-conv-order-particle-sys-new} still holds as long as the function \[\bm x=(x^1, ..., x^N)\mapsto\Sigma(t, \bm x)= \begin{pmatrix}
\sigma^1\big(t,x^1\big) &  && \bar{\sigma}^0 \mathbf{I}_d\\
&\ddots&&\vdots\\
&&\sigma^N\big(t,x^N\big) &\bar{\sigma}^0 \mathbf{I}_d\\
\end{pmatrix} \] is convex and \( \Sigma \preceq \Theta \).
\end{rem}

\begin{cor}\label{cor:conv_order_standard} Assume that Assumptions \ref{Ass:AssumptionI} and  \ref{Ass:Assumption-standard-cv-new} hold for some  $p\!\in [2, +\infty)$. Let $X\coloneqq (X_{t})_{t\in[0, T]}$, $Y\coloneqq (Y_{t})_{t\in[0, T]}$  denote the unique solutions of the conditional McKean-Vlasov equations with common noise~(\ref{eq:defMKV-CN-X}) and~(\ref{eq:defMKV-CN-Y}) respectively.
Then, for every  convex function $F:\left(\mathcal{C}([0,T], \R^d), \|\cdot\|_{\sup}\right) \to \R$ with $p$-polynomial growth, one has $\EE[F(X)] \leq \EE[F(Y)]$. 
\end{cor}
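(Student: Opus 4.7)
The plan is to lift the functional convex order for the particle systems, established in Proposition~\ref{prop:standard-conv-order-particle-sys-new}, to their mean-field limits via the conditional propagation of chaos. This route is precisely the one alluded to in Subsection~\ref{subsec:overview} and motivates the introduction of Assumption~\ref{Ass:Assumption-standard-cv-new} (allowing in particular $\sigma^0 \neq \theta^0$, which is forbidden under Assumption~\ref{Ass:AssumptionII}).

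Given a convex function $F : \mathcal{C}([0,T], \R^d) \to \R$ with $p$-polynomial growth, the first step is to introduce the lifted functional $\tilde F : \mathcal{C}([0,T], (\R^d)^N) \to \R$ defined by $\tilde F(\alpha^1, \ldots, \alpha^N) := F(\alpha^1)$, which is manifestly convex with $p$-polynomial growth on the product path space equipped with its natural sup-norm. Applying Proposition~\ref{prop:standard-conv-order-particle-sys-new} under Assumptions~\ref{Ass:AssumptionI} and~\ref{Ass:Assumption-standard-cv-new} then yields, for every $N \geq 1$,
$$\EE\bigl[F(X^{1,N})\bigr] = \EE\bigl[\tilde F(X^{1,N}, \ldots, X^{N,N})\bigr] \leq \EE\bigl[\tilde F(Y^{1,N}, \ldots, Y^{N,N})\bigr] = \EE\bigl[F(Y^{1,N})\bigr],$$
where $(X^{n,N})_{1 \leq n \leq N}$ and $(Y^{n,N})_{1 \leq n \leq N}$ are the particle systems defined by~\eqref{eq:def-particle-system-X} and~\eqref{eq:def-particle-system-Y}.

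The second step is to invoke the conditional propagation of chaos (see e.g.\ \cite[Theorem 2.12]{MR3753660}, recalled in the discussion following~\eqref{eq:def-particle-system-X}), which under Assumption~\ref{Ass:AssumptionI} yields $\EE\bigl[\sup_{t\in[0,T]} |X^{1,N}_t - X_t|^p\bigr] \to 0$ and $\EE\bigl[\sup_{t\in[0,T]} |Y^{1,N}_t - Y_t|^p\bigr] \to 0$ as $N \to \infty$. Extracting a subsequence $(N_k)_k$ along which $X^{1,N_k} \to X$ and $Y^{1,N_k} \to Y$ almost surely for $\|\cdot\|_{\sup}$, the continuity of $F$ with respect to $\|\cdot\|_{\sup}$ (convex with polynomial growth, cf.\ \cite[Lemma 2.1.1]{MR2179578}) gives $F(X^{1,N_k}) \to F(X)$ and $F(Y^{1,N_k}) \to F(Y)$ almost surely.

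Finally, to pass to the limit in expectation it suffices to establish uniform integrability of $\{F(X^{1,N_k})\}_k$ and $\{F(Y^{1,N_k})\}_k$. Since $|F(\alpha)| \leq C(1 + \|\alpha\|_{\sup}^p)$, this reduces to uniform integrability of the $p$-th powers of the sup-norms, which follows from Scheffé's lemma applied to the almost sure convergence of $\|X^{1,N_k}\|_{\sup}^p$ to $\|X\|_{\sup}^p$ together with convergence of the associated $L^1$-norms (a direct consequence of the $L^p$-convergence above), and analogously for $Y$. Passing to the limit along $(N_k)$ in the inequality from the first step then concludes the proof. The main technical subtlety lies precisely in this limiting procedure, which is handled by standard arguments; the convex ordering itself comes essentially for free at the particle level, once $F$ has been lifted to the product path space.
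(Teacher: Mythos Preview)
Your proposal is correct and follows essentially the same approach as the paper's own proof: lift $F$ to the product path space via the projection onto the first coordinate, apply Proposition~\ref{prop:standard-conv-order-particle-sys-new} at the particle level, and pass to the limit using conditional propagation of chaos. The only minor difference is cosmetic: the paper invokes weak convergence together with \cite[Theorem~3.5]{billingsley2013convergence} for the limiting step, whereas you extract an almost-sure subsequence and appeal to Scheff\'e's lemma; both routes are standard and yield the same conclusion.
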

\begin{rem}\label{rem:allow-comparer-with-or-not-common-noise}
By setting $\bar{\sigma}_0 = 0$ in Assumption~\ref{Ass:Assumption-standard-cv-new}, Proposition~\ref{prop:standard-conv-order-particle-sys-new} and Corollary~\ref{cor:conv_order_standard} enable to draw a comparison between a system with common noise and another system without common noise.
\end{rem}
\begin{proof}[Proof of Corollary \ref{cor:conv_order_standard}] 
The conditional propagation of chaos property for the McKean–Vlasov equation with common noise (see, e.g., \cite[Theorem 2.12]{MR3753660}\footnote{The conditional propagation of chaos property in the $L^2$ and $\mathcal{W}_2$ distances is proved in~\cite[Theorem 2.12]{MR3753660}. Adaptation of this proof to the \(L^p\) and \(\mathcal{W}_p\) distances for \(p \geq 2\) under our Assumption~\ref{Ass:AssumptionI} is straightforward. })
states that if the Brownian motions $B$ and $B^0$ in \eqref{eq:defMKV-CN-X} are the same as the Brownian motions $B^1$ and $B^0$ driving the first particle in the system \eqref{eq:def-particle-system-X}, then 
\begin{equation}\label{eq:chaos-lp-cvg}
\lim_{N\rightarrow +\infty}\EE \Big[\sup_{t\in[0,T]}\big|X_t^{1, N}-X_t\big|^p\Big]=0, 
\end{equation}
which implies that $X_t^{1, N}$ weakly converges for the sup-norm topology to $X$ when the number of particles $N\rightarrow+\infty$.
Define now a function $\pi^1:\bm{\alpha}=(\alpha^1, ..., \alpha^N)\in\mathcal{C}\big([0,T],(\RD)^N\big)\mapsto \alpha^1$ with $\alpha^1 \in \mathcal{C}([0,T],\RD)$. Then $F\circ \pi^1$ is still a  convex function with $p$-polynomial growth. By Proposition~\ref{prop:standard-conv-order-particle-sys-new}, 
\[
\mathbb{E} F(X^{1,N}) = \mathbb{E} \left[ F \circ \pi^1(X^{1,N}, \ldots, X^{N,N}) \right] \leq \mathbb{E} \left[ F \circ \pi^1(Y^{1,N}, \ldots, Y^{N,N}) \right] = \mathbb{E} F(Y^{1,N}).
\]
Moreover, since \( F(X^{1,N}) \leq C \big(1 + \Vert X^{1,N} \Vert_{\sup}^{p} \big) \) and 
\(\mathbb{E} \Vert X^{1,N} \Vert_{\sup} \to \mathbb{E} \Vert X \Vert_{\sup} \) as \( N \to +\infty \) by \eqref{eq:chaos-lp-cvg}, it follows that 
\(\mathbb{E} F(X^{1,N}) \to \mathbb{E} F(X)\)  (see e.g. \cite[Theorem 3.5]{billingsley2013convergence}).    
The same reasoning shows that \(\mathbb{E} F(Y^{1,N}) \to \mathbb{E} F(Y)\).  
We conclude by letting \( N \to +\infty \).
\end{proof}





\section{Increasing convex order for one-dimensional McKean-Vlasov processes with common noise}\label{sec:proof-incre-conv-order}

In this section, we prove Theorem~\ref{thm:increasing_convex_order},  following a strategy similar to that of Section~\ref{sec:proof-cond-conv-order} in the one-dimensional setting ($d = q = 1$). Inspired by the approach in \cite{liu2021monotone}, and to avoid assuming that $\sigma$ is non-decreasing in $x$, we introduce a truncated Euler scheme —  defined in \eqref{eq:EulerTruncatedX} — which serves as a discrete intermediary in place of the standard Euler scheme used in Section \ref{sec:proof-cond-conv-order}. Compared to the proof of Theorem \ref{thm:convex_order_cond}, the key differences here lie in establishing the marginal increasing convex order for the truncated Euler scheme (see Subsection \ref{sec:marginal-increasing-truncated}) and proving its convergence (see Subsection \ref{sec:cvg-truncated}) under Assumptions \ref{Ass:AssumptionI} and \ref{Ass:Assumption-increasing-cv}.

\subsection{Definition of the truncated Euler scheme and its marginal increasing convex order}\label{sec:marginal-increasing-truncated}

We use the same time discretization setting  as in \eqref{eq:EulerX} with $M \in \mathbb{N}^* $ and $h=\frac{T}{M}$. Moreover, recall that for all $m \in \{ 1, \ldots, M\}$, the random variables $Z_{m+1}\coloneqq\frac{1}{\sqrt{h}}(B_{t_{m+1}}-B_{t_{m}})$ and $Z_{m+1}^0\coloneqq\frac{1}{\sqrt{h}}(B^0_{t_{m+1}}-B^0_{t_{m}})$ are i.i.d. with probability distribution $\mathcal{N}(0, 1)$. 

We introduce the truncated function $ T^h : \mathbb{R} \to \mathbb{R} $ defined by
\begin{equation}\label{eq:def_Th}
    T^h(z) = z\,\mathbbm{1}_{\{|z| \leq c_{h,\sigma}\}} \in \left[-c_{h,\sigma},\, c_{h,\sigma}\right],
\end{equation} 
where the truncation threshold is given by
$c_{h,\sigma} = \frac{1}{2\sqrt{h}\left([\sigma]_{\mathrm{Lip}_x}\right)},$ 
and where $ [\sigma]_{\mathrm{Lip}_x}$ denotes the Lipschitz constant of the function $\sigma$ with respect to the variable $x$. The truncated Euler schemes for \eqref{eq:defMKV-CN-X} and \eqref{eq:defMKV-CN-Y} are defined by 
\begin{equation}
\label{eq:EulerTruncatedX}
\left\{
\begin{array}{ll}
&\widetilde{X}^M_{t_{m+1}}\!\!=\widetilde{X}^M_{t_{m}} +h\, b(t_{m}, \widetilde{X}^M_{t_{m}},\, \mathcal{L}^1(\widetilde{X}^M_{t_{m}}))+\sqrt{ h\,}\,\sigma(t_{m}, \widetilde{X}^M_{t_{m}}, \mathcal{L}^1(\widetilde{X}^M_{t_{m}}))Z^{h}_{m+1} \\
&\hspace{1.25cm} +\sqrt{h}\,\sigma^0(t_{m}, \mathcal{L}^1(\widetilde{X}^M_{t_{m}}))\,Z^0_{m+1}, \\
&\widetilde{X}^M_{0} = X_0,
\end{array}
\right.
\end{equation}

\begin{equation}
\label{eq:EulerTruncatedY}
\left\{ 
\begin{array}{ll}
&\widetilde{Y}^M_{t_{m+1}}\!=\,\widetilde{Y}^M_{t_{m}}+ h\, \beta(t_{m}, \widetilde{Y}^M_{t_{m}}, \,\mathcal{L}^1(\widetilde{Y}^M_{t_{m}}))+\sqrt{ h\,}\,\,\theta(t_{m}, \widetilde{Y}^M_{t_{m}}, \,\mathcal{L}^1(\widetilde{Y}^M_{t_{m}}))Z^{h}_{m+1} \\
& \hspace{1.25cm} +\sqrt{h}\,\sigma^0(t_{m},\, \mathcal{L}^1(\widetilde{Y}^M_{t_{m}}))\,Z^0_{m+1}, \: \; \\
&\widetilde{Y}^M_{0}=Y_{0},
\end{array}
\right.
\end{equation}
where $Z^h_m = T^h(Z_m)$ for $m=1,\ldots,M$. We also define their continuous extensions $\widetilde{X}^M = (\widetilde{X}^M_t)_{t\in[0,T]}$ and $\widetilde{Y}^M = (\widetilde{Y}^M_t)_{t\in[0,T]}$ by using interpolation: for every $m=0, ..., M-1$ and  $t\in[t_m, t_{m+1}]$,
\begin{equation}\label{eq:continuous_euler_truncated}
\begin{aligned}
    \widetilde{X}^M_t &= i_M(\widetilde{X}^M_{0:M})(t) = \frac{t_{m+1}-t}{t_{m+1}-t_m}\, \widetilde{X}^M_{t_m} + \frac{t-t_m}{t_{m+1}-t_m}\, \widetilde{X}^M_{t_{m+1}}, \\
    \widetilde{Y}^M_t &= i_M(\widetilde{Y}^M_{0:M})(t) = \frac{t_{m+1}-t}{t_{m+1}-t_m}\, \widetilde{Y}^M_{t_m} + \frac{t-t_m}{t_{m+1}-t_m}\, \widetilde{Y}^M_{t_{m+1}}, \\
\end{aligned}
\end{equation}
where $i_M$ is introduced in Definition \ref{def:interpolator}. 
The following proposition, whose proof is given in Appendix~\ref{sec:proof_lemmas_incre_conv_order},  establishes the convergence of the truncated Euler scheme.

\begin{prop}\label{prop:cvg_truncated_Euler_scheme}
Assume that Assumption~\ref{Ass:AssumptionI} holds with some $p\in[2, \infty)$. Then there exists a constant \( C > 0 \), independent of \( M \), such that for every \( M \geq 1 \), \[\vertii{\sup_{t\in[0, T]}\left|\widetilde{X}^M_{t}\right|}_{p}\leq C(1+\vertii{X_0}_p)<+\infty .\] Moreover,  for every $r\in[1,p)$,  we have
\[\left\|\sup_{t\in[0, T]}\left|X_{t}-\widetilde{X}^M_{t}\right|\right\|_r \xrightarrow[M \rightarrow +\infty]{} 0, \]
and there exist a subsequence $\phi(M)$, $M\geq1$ such that $\phi(M) \to \infty$ as $M \to \infty$, and a subset $\bar{\Omega}^0\subset \Omega^{0}$ such that $\PP^0(\bar{\Omega}^0)=1$ and for every $\omega^0\in \bar{\Omega}^0$,
\[ \EE^{1}\left[\sup_{t\in[0,T]}\left|X_t(\omega^0, \cdot)-\widetilde{X}_t^{\phi(M)}(\omega^0, \cdot)\right|^r\right]\xrightarrow{\,M\rightarrow+\infty\,} 0.\quad \]
\end{prop}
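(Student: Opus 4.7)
I would prove the three assertions of Proposition~\ref{prop:cvg_truncated_Euler_scheme} in order, adapting standard Euler-scheme arguments to the common-noise and truncated setting.

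\emph{Moment bound.} The first claim is a classical discrete estimate. Using the linear growth of $b,\sigma,\sigma^0$ inherited from the Lipschitz-in-$(x,\mu)$ and H\"older-in-$t$ conditions of Assumption~\ref{Ass:AssumptionI}, the boundedness $|Z^h_{m+1}|\le c_{h,\sigma}$, the Gaussian moments of $Z^0_{m+1}$, and the elementary bound $\mathcal{W}_p(\mathcal{L}^1(\widetilde X^M_{t_m}),\delta_0)^p \le \mathbb{E}^1[|\widetilde X^M_{t_m}|^p]$ taken under $\mathbb{P}^0$-expectation, a discrete Doob/BDG estimate combined with a discrete Gr\"onwall argument yields $\bigl\|\sup_{n\le M}|\widetilde X^M_{t_n}|\bigr\|_p \le C(1+\|X_0\|_p)$, with $C$ independent of $M$. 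Since the continuous extension~(\ref{eq:continuous_euler_truncated}) is a piecewise-affine interpolation between grid values, $\sup_{t\in[0,T]}|\widetilde X^M_t|\le \sup_{n\le M}|\widetilde X^M_{t_n}|$, and the stated bound follows.

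\emph{$L^r$-convergence.} The strategy is to compare $\widetilde X^M$ with the true solution $X$ of~(\ref{eq:defMKV-CN-X}) by splitting the one-step error into two pieces. The first is the \emph{truncation error} $\sqrt{h}\,\sigma(t_m,\widetilde X^M_{t_m},\mathcal{L}^1(\widetilde X^M_{t_m}))\,Z_{m+1}\mathbbm{1}_{\{|Z_{m+1}|>c_{h,\sigma}\}}$: since $c_{h,\sigma}=(2\sqrt{h}\,[\sigma]_{\mathrm{Lip}_x})^{-1}\to +\infty$ as $h\to 0$, Gaussian tails imply that $\|Z_{m+1}\mathbbm{1}_{\{|Z_{m+1}|>c_{h,\sigma}\}}\|_{r'}$ decays faster than any polynomial in $h$ for every $r'\ge 1$, and combined with the $L^p$-bound from the moment step this contribution vanishes in $L^r$ after summation over the $M=T/h$ time steps. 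The second is the \emph{standard Euler discretization error}, handled by Lipschitz continuity of $b,\sigma,\sigma^0$ (Assumption~\ref{Ass:AssumptionI}), the BDG inequality in $L^p$, and the pointwise Wasserstein bound
\begin{equation*}
\mathcal{W}_p\bigl(\mathcal{L}^1(\widetilde X^M_{t_m})(\omega^0),\,\mathcal{L}^1(X_{t_m})(\omega^0)\bigr)^p \le \mathbb{E}^1\bigl[|\widetilde X^M_{t_m}(\omega^0,\cdot)-X_{t_m}(\omega^0,\cdot)|^p\bigr].
\end{equation*}
A discrete Gr\"onwall inequality closes the estimate in $L^p$, and by monotonicity of $L^r$-norms (or, alternatively, by a Vitali argument based on the uniform $L^p$-bound of the previous step), one obtains $L^r$-convergence for every $r\in[1,p)$.

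\emph{Almost-sure conditional convergence.} The third assertion follows from the second by Fubini. Since
\begin{equation*}
\mathbb{E}^0\Bigl[\mathbb{E}^1\bigl[\sup_{t\in[0,T]}|X_t-\widetilde X^M_t|^r\bigr]\Bigr] = \mathbb{E}\bigl[\sup_{t\in[0,T]}|X_t-\widetilde X^M_t|^r\bigr]\xrightarrow[M\to\infty]{} 0,
\end{equation*}
the non-negative random variable $\Xi_M : \omega^0 \mapsto \mathbb{E}^1[\sup_t|X_t-\widetilde X^M_t|^r]$ converges to $0$ in $L^1(\Omega^0,\mathbb{P}^0)$. A standard subsequence extraction then produces $\phi(M)\to +\infty$ such that $\Xi_{\phi(M)}(\omega^0)\to 0$ for $\mathbb{P}^0$-a.e. $\omega^0\in\bar\Omega^0$.

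\emph{Main obstacle.} I expect the most technical step to be the Gr\"onwall closure of the $L^r$-convergence: the $\mathcal{W}_p$-Lipschitz assumption must be combined with the conditional-law representation $\mathcal{L}^1(\cdot)$ in a way that yields a closed inequality on the full (unconditional) expectation, and the truncation error must remain controllable when summed across all $M=T/h$ grid points. The common noise $B^0$ does not create substantive new difficulties at the BDG/Gr\"onwall level beyond those already present in the no-common-noise setting of~\cite{liu2021monotone}; its main effect is to force every measure-valued argument to be the conditional law $\mathcal{L}^1$ rather than the marginal law.
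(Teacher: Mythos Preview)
Your plan is correct and matches the paper's approach: the moment bound via discrete BDG and Gr\"onwall (the paper makes the key domination $\mathbb{E}\bigl[|\sqrt h\,\sigma(\cdot)Z^h_{m+1}|^p\bigr]\le\mathbb{E}\bigl[|\sqrt h\,\sigma(\cdot)Z_{m+1}|^p\bigr]$ explicit and then invokes the standard-Euler estimate of~\cite{gall2024particle}), the $L^r$-convergence via your truncation/discretisation splitting (the paper simply defers to~\cite[Proposition~5.2]{liu2021monotone}), and the a.s.\ conditional subsequence extraction via Fubini exactly as in Lemma~\ref{lem: 3eme-tentative}. One minor caveat: the pointwise bound $|Z^h_{m+1}|\le c_{h,\sigma}\sim h^{-1/2}$ you invoke is only useful for finiteness at a fixed $M$ and would not by itself give an $M$-independent Gr\"onwall constant; for the uniform estimate the relevant inequality is $|Z^h_{m+1}|\le|Z_{m+1}|$, so that the truncated scheme inherits the BDG bounds of the non-truncated one.
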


Moreover, we define a pointwise partial order $\preceq$ on $\mathcal{C}([0,T],\R)$ as follows, for every $\alpha = (\alpha_t)_{t\in[0,T]}$ and $\beta = (\beta_t)_{t\in[0,T]}$, we have
\begin{equation} \label{eq:pointwise_partial_order}
    \alpha \preceq \beta \Longleftrightarrow \alpha_t \leq \beta_t,\, \qquad \forall t \in [0,T].
\end{equation}
The main result of this section is the following proposition, which establishes the marginal increasing convex order for the truncated Euler scheme.

\begin{prop}\label{prop:marginal_conditional_icv_euler_trunc}
Let $M$ be large enough such that $h = \frac{T}{M} \in \left(0, \frac{1}{2[b]_{\mathrm{Lip}_x}}\vee 1 \right)$. Assume that Assumptions \ref{Ass:AssumptionI} and \ref{Ass:Assumption-increasing-cv} hold. Let $(\widetilde{X}^M_{t_m})_{0\leq m\leq M}$ and $(\widetilde{Y}^M_{t_m})_{0\leq m\leq M}$ be random variables defined by the truncated Euler schemes  \eqref{eq:EulerTruncatedX}-\eqref{eq:EulerTruncatedY}. Then, for every $m=0,\ldots,M,$ 
\[ \calL^1(\widetilde{X}^M_{t_m}) \iconright \calL^1(\widetilde{Y}^M_{t_m}),\quad \PP^0\text{-almost surely}. \]
\end{prop}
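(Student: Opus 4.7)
The plan is to adapt the inductive argument used for Proposition \ref{prop:marginalEuler} to the present setting of the increasing convex order and the truncated Euler scheme. I would proceed by forward induction on $m \in \{0,\ldots,M\}$, the base case $m=0$ being immediate from Assumption \ref{Ass:Assumption-increasing-cv}-(5) since the initial laws are independent of $\omega^0$. For the inductive step, fix $\omega^0$ in a set of full $\PP^0$-measure, assume the claim at step $m$, and let $\varphi:\R\to\R$ be any non-decreasing convex function with linear growth. Conditioning on $\mathcal{G}_m:=\sigma(X_0,Y_0,Z_1,\ldots,Z_m)$ and writing $\mu_X:=\mathcal{L}^1(\widetilde X^M_{t_m})(\omega^0)$, $\mu_Y:=\mathcal{L}^1(\widetilde Y^M_{t_m})(\omega^0)$, I would compare $\EE^1[\varphi(\widetilde X^M_{t_{m+1}}(\omega^0,\cdot))]$ to $\EE^1[\varphi(\widetilde Y^M_{t_{m+1}}(\omega^0,\cdot))]$ via a chain of three inequalities corresponding to: (i) replacing $(b,\sigma,\sigma^0)(t_m,x,\mu_X)$ by $(b,\sigma,\sigma^0)(t_m,x,\mu_Y)$ inside the inner expectation while keeping the integrating measure equal to $\mu_X$; (ii) switching the integrating measure from $\mu_X$ to $\mu_Y$ via the induction hypothesis; (iii) replacing the coefficients $(b,\sigma)$ by $(\beta,\theta)$.

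Inequalities (i) and (iii) rely on coefficient comparisons obtained respectively from the induction hypothesis combined with Assumption \ref{Ass:Assumption-increasing-cv}-(2)-(3) (yielding $b(t_m,x,\mu_X)\le b(t_m,x,\mu_Y)$, $0\le\sigma(t_m,x,\mu_X)\le\sigma(t_m,x,\mu_Y)$, $\sigma^0(t_m,\mu_X)=\sigma^0(t_m,\mu_Y)$) and from Assumption \ref{Ass:Assumption-increasing-cv}-(4) ($b\le\beta$, $0\le\sigma\le\theta$). These pointwise bounds propagate through $\varphi$ by combining (a) the non-decreasingness of $\varphi$ to handle the drift terms and (b) the classical fact that for a symmetric centered random variable $\xi$ and scalars $0\le a\le a'$, one has $a\xi\conright a'\xi$, so that $\EE^1[\varphi(y+\sqrt h\,a\,\xi)]\le\EE^1[\varphi(y+\sqrt h\,a'\,\xi)]$ by convexity of $\varphi$ (applied here with $\xi=Z^h_{m+1}$, which is symmetric as $T^h$ is odd and $Z_{m+1}$ is Gaussian).

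The central step is inequality (ii), which requires the following key auxiliary lemma, playing the role of Lemmas \ref{lem:Euler_kernel_order}--\ref{lem:Euler_fun_con} in the present setting: for fixed $\mu\in\mathcal{P}_p(\R)$ and fixed $\omega^0$, the map
\[
x\longmapsto\Psi(x):=\EE^1\!\big[\varphi\big(x+h\,b(t_m,x,\mu)+\sqrt h\,\sigma(t_m,x,\mu)Z^h_{m+1}+\sqrt h\,\sigma^0(t_m,\mu)Z^0_{m+1}(\omega^0)\big)\big]
\]
is non-decreasing, convex and of linear growth. Granted this, the induction hypothesis $\mu_X\iconright\mu_Y$ applied to the test function $\Psi$ yields (ii). Monotonicity of $\Psi$ is a direct consequence of the truncation: under $h\in(0,\frac{1}{2[b]_{\text{Lip}_x}}\wedge 1)$ and $|Z^h_{m+1}|\le c_{h,\sigma}=\frac{1}{2\sqrt h\,[\sigma]_{\text{Lip}_x}}$, the Clarke subdifferential of $x\mapsto x+h\,b(t_m,x,\mu)+\sqrt h\,\sigma(t_m,x,\mu)Z^h_{m+1}$ is contained in $[0,+\infty)$ pointwise in $Z^h_{m+1}$, so that composition with the non-decreasing $\varphi$ and expectation preserve monotonicity.

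The main obstacle is the convexity of $\Psi$, which cannot be obtained pointwise in $Z^h_{m+1}$: when $Z^h_{m+1}<0$, the term $\sqrt h\,\sigma(\cdot,\mu)Z^h_{m+1}$ is concave in $x$ because $\sigma$ is convex and non-negative. To bypass this, I would use the Riesz representation $\varphi(y)=c+\alpha y+\int_{\R}(y-k)_+\,d\nu_\varphi(k)$ of non-decreasing convex functions, with $\alpha\ge 0$ and $\nu_\varphi$ a positive Radon measure, reducing the claim to $\varphi(y)=(y-k)_+$. Exploiting the symmetry of $T^h(Z_{m+1})$, the expectation then symmetrizes into the integral over $z\ge 0$ of $\tfrac12\big[(g(x,z)-\tilde k)_++(g(x,-z)-\tilde k)_+\big]$, where $g(x,z):=x+hb(t_m,x,\mu)+\sqrt h\,\sigma(t_m,x,\mu)z$ and $\tilde k:=k-\sqrt h\sigma^0(t_m,\mu)Z^0_{m+1}(\omega^0)$. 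For each fixed $z\ge 0$, a direct case analysis on the signs of $g(x,\pm z)-\tilde k$ partitions $\R$ into three pieces (both terms zero, only the $+z$ term nonzero, both terms nonzero) on which the sum equals respectively $0$, $g(x,z)-\tilde k$ and $2(x+hb(x))-2\tilde k$; each expression is convex in $x$ by Assumption \ref{Ass:Assumption-increasing-cv}-(1), and the comparison of left/right derivatives at the two break-points reduces to the inequalities $1+h(2\partial_+ b-\partial_- b)\ge \sqrt h\,\partial_-\sigma\,z$, which are secured exactly by the pair of bounds $h[b]_{\text{Lip}_x}\le\tfrac12$ and $\sqrt h[\sigma]_{\text{Lip}_x}|Z^h_{m+1}|\le\tfrac12$. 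Once the chain of inequalities is complete, the conclusion $\mathcal{L}^1(\widetilde X^M_{t_{m+1}})(\omega^0)\iconright\mathcal{L}^1(\widetilde Y^M_{t_{m+1}})(\omega^0)$ follows by the definition of the (conditional) increasing convex order, closing the induction.
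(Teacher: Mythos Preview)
Your proposal is correct and follows essentially the same route as the paper: forward induction on $m$, with the inductive step broken into the same three-inequality chain (change the measure argument in the coefficients via Assumption~\ref{Ass:Assumption-increasing-cv}-(2)-(3), change the integrating law via the key convexity/monotonicity lemma, then change $(b,\sigma)$ to $(\beta,\theta)$ via Assumption~\ref{Ass:Assumption-increasing-cv}-(4)). The only substantive difference is that where the paper packages the pivotal step into Lemmas~\ref{lem:operator-for-icv}--\ref{lem:icv-marginal-Euler} and delegates the convexity of $x\mapsto\Psi(x)$ to \cite[Proposition~4.3]{liu2021monotone}, you supply a self-contained argument (Riesz representation of non-decreasing convex functions, symmetrization of $Z^h_{m+1}$, and a case analysis on the signs of $g(x,\pm z)-\tilde k$); this is precisely the argument underlying the cited result, and your derivative comparison at the break-points is secured by the two truncation constraints $h[b]_{\mathrm{Lip}_x}\le\tfrac12$ and $\sqrt h|Z^h_{m+1}|[\sigma]_{\mathrm{Lip}_x}\le\tfrac12$ just as you indicate (using in addition that convexity of $b$ gives $2\partial_+b-\partial_-b\ge\partial_+b\ge -[b]_{\mathrm{Lip}_x}$, so the left side is $\ge\tfrac12$).
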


The proof of Proposition \ref{prop:marginal_conditional_icv_euler_trunc} relies on the lemmas below, whose proofs are provided in Appendix \ref{sec:proof_lemmas_incre_conv_order}.

\begin{lem}\label{lem:operator-for-icv}
Assume that Assumptions~\ref{Ass:AssumptionI} and~\ref{Ass:Assumption-increasing-cv} hold for some \( p \in [2, \infty) \). For every $m=0, ..., M-1$  and for every $\omega^0\in\Omega^0$, we define an operator $Q_{m+1}^{\,\omega^0,\,h} : \mathcal{C}_{\mathrm{cv}}(\R, \R)\longrightarrow \mathcal{C}(\R\times \mathcal{P}_1(\R)\times \R)$ associated with $Z_{m+1}^h$ and $Z^0_{m+1}(\omega^0)$ defined in \eqref{eq:EulerTruncatedX} and in \eqref{eq:EulerTruncatedY} by
\begin{align} \label{eq:def_Q_h_omega}
\big(Q_{m+1}^{\,\omega^0,h}\varphi\big) (x, \mu, u) &\coloneqq \EE^1\Big[\varphi\big(x+h\,b(t_m, x, \mu)+\sqrt{h}\,u\,Z^h_{m+1}+\sqrt{h}\,\sigma^0(t_m, \mu)\,Z^0_{m+1}(\omega^0)\big)\Big]. 
\end{align}
Then, for every
non-decreasing convex function \( \varphi : \R \rightarrow \R \) with linear growth, we have 
\begin{enumerate}[$(i)$]
\item for every fixed $\mu\in\mathcal{P}_p(\R)$, the function $(x,u)\mapsto\big(Q_{m+1}^{\,\omega^0,h}\varphi\big) (x, \mu, u)$ is convex;
\item for every fixed \( (x, \mu) \in \R \times \mathcal{P}_p(\R) \), the function $u \mapsto \big(Q_{m+1}^{\,\omega^0}\varphi\big)(x, \mu, u)$
is non-decreasing on \( \R_+ \), non-increasing on \( \R_- \), and attains its minimum at \( u = 0 \).
\end{enumerate}
\end{lem}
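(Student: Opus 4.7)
The plan is to establish the two claims successively, following the strategy of Lemma~\ref{lem:Euler_kernel_order} but adapted to the truncated increment $Z^h_{m+1}$ and exploiting its symmetry.

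For (i), I would fix $\mu \in \mathcal{P}_p(\R)$, $\omega^0 \in \Omega^0$ and examine, for fixed realisations $z \in [-c_{h,\sigma},c_{h,\sigma}]$ and $z^0 \in \R$, the deterministic map
\[
(x,u) \longmapsto g(x,u;z,z^0) := x + h\,b(t_m,x,\mu) + \sqrt{h}\,u\,z + \sqrt{h}\,\sigma^0(t_m,\mu)\,z^0.
\]
The third summand is linear in $u$, the fourth is constant in $(x,u)$, and by Assumption~\ref{Ass:Assumption-increasing-cv}-(1) the map $x \mapsto x + h\,b(t_m,x,\mu)$ is convex. Hence $g(\,\cdot\,,\,\cdot\,;z,z^0)$ is jointly convex in $(x,u)$. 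Since $\varphi$ is non-decreasing and convex, the composition $\varphi \circ g(\cdot,\cdot;z,z^0)$ is jointly convex in $(x,u)$ by the standard composition rule. Finally, the linear growth of $\varphi$ together with the boundedness of $Z^h_{m+1}$ ensures integrability, and convexity is preserved under integration against $\PP^1$, yielding convexity of $(x,u) \mapsto (Q^{\,\omega^0,h}_{m+1}\varphi)(x,\mu,u)$.

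For (ii), I would fix $(x,\mu)$ and set $f(u) := (Q^{\,\omega^0,h}_{m+1}\varphi)(x,\mu,u)$. By (i), $f$ is convex in $u$. The crucial additional observation is that the truncation $T^h$ defined in~\eqref{eq:def_Th} is an odd function, so since $Z_{m+1} \sim \mathcal{N}(0,1)$ is symmetric about zero, $Z^h_{m+1} = T^h(Z_{m+1})$ and $-Z^h_{m+1}$ share the same $\PP^1$-distribution. Consequently, setting $A := x + h\,b(t_m,x,\mu) + \sqrt{h}\,\sigma^0(t_m,\mu)\,Z^0_{m+1}(\omega^0)$,
\[
f(-u) = \EE^1\Big[\varphi\big(A - \sqrt{h}\,u\,Z^h_{m+1}\big)\Big] = \EE^1\Big[\varphi\big(A + \sqrt{h}\,u\,Z^h_{m+1}\big)\Big] = f(u),
\]
i.e.\ $f$ is symmetric about $0$. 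Any convex symmetric real function attains its minimum at $0$ and is non-increasing on $\R_-$ and non-decreasing on $\R_+$, which yields (ii).

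There is no genuine obstacle here: the argument is essentially a direct adaptation of Lemma~\ref{lem:Euler_kernel_order}, the only new ingredient being that the oddness of $T^h$ allows one to recover the symmetry in law of the truncated increment and thereby the monotonicity of $f$ around the origin, which is exactly the form of monotonicity required downstream when propagating the increasing convex order through the truncated Euler scheme.
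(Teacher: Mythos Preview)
Your proof is correct and follows essentially the same approach as the paper. For part~(i) your argument is identical to the paper's; for part~(ii) the paper simply invokes \cite[Lemma~4.1(a)]{liu2021monotone} (applied to the convex function $v\mapsto\varphi(A+\sqrt{h}\,v)$), whereas you unpack that lemma's content directly via the oddness of $T^h$ and the resulting symmetry of $Z^h_{m+1}$, making your version self-contained but otherwise equivalent.
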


\begin{lem}\label{lem:icv-marginal-Euler}
Assume that Assumptions~\ref{Ass:AssumptionI} and~\ref{Ass:Assumption-increasing-cv} hold for some \( p \in [2, \infty) \).  
Let \( M \) be large enough such that the time step \( h = \frac{T}{M} \) satisfies \( h \in \left(0, \frac{1}{2[b]_{\mathrm{Lip}_x}} \vee 1\right) \).   
Let \( \varphi : \R \to \R \) be a convex and non-decreasing function.
Then, for every fixed \( \mu \in \mathcal{P}_{p}(\R) \), \( \omega^0 \in \Omega^0 \), and \( m \in \{0, \ldots, M-1\} \), the function
\[
x \longmapsto \EE^1\Big[\varphi\big(x+h\,b(t_m, x, \mu)+\sqrt{h}\,\sigma(t_m, x, \mu)\,Z^h_{m+1}+\sqrt{h}\,\sigma^0(t_m, \mu)\,Z^0_{m+1}(\omega^0)\big)\Big]
\]
is convex and non-decreasing.
\end{lem}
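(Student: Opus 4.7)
The plan is to decompose the inner expression as
\[F(x) := \EE^1\Big[\varphi\big(A(x) + B(x)\, Z^h_{m+1} + C\, Z^0_{m+1}(\omega^0)\big)\Big],\]
where $A(x) = x + h\,b(t_m, x, \mu)$, $B(x) = \sqrt{h}\,\sigma(t_m, x, \mu)$ and $C = \sqrt{h}\,\sigma^0(t_m, \mu)$. By Assumption \ref{Ass:Assumption-increasing-cv}-(1) and (4), $A$ is convex in $x$, $B$ is convex and non-negative in $x$, while $C$ is a deterministic constant once $\mu$ and $\omega^0$ are fixed.

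For the non-decreasing property, I would exploit the bounded support $[-c_{h,\sigma}, c_{h,\sigma}]$ of $Z^h_{m+1}$ granted by the truncation \eqref{eq:def_Th}. Using the Lipschitz bounds from Assumption \ref{Ass:AssumptionI}, for $x_1 < x_2$ and any $|z| \le c_{h,\sigma}$,
\begin{align*}
    A(x_2) + B(x_2)z - A(x_1) - B(x_1)z
    &\ge (x_2 - x_1)\big(1 - h\,[b]_{\text{Lip}_x}\big) - \sqrt{h}\,[\sigma]_{\text{Lip}_x}\, c_{h,\sigma}\,(x_2 - x_1) \\
    &= (x_2 - x_1)\big(\tfrac12 - h\,[b]_{\text{Lip}_x}\big) \ge 0,
\end{align*}
thanks to $c_{h,\sigma} = (2\sqrt{h}\,[\sigma]_{\text{Lip}_x})^{-1}$ and the bound on $h$. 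Hence $x \mapsto A(x) + B(x)z + Cz^0$ is non-decreasing for every fixed realization $z \in [-c_{h,\sigma},c_{h,\sigma}]$ and every $z^0 \in \R$. The monotonicity of $\varphi$ together with monotonicity of the expectation then yield the claim for $F$.

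The convexity is more delicate because $x \mapsto B(x)z$ fails to be convex whenever $z < 0$, so a pointwise-in-$Z^h_{m+1}$ argument has no chance of succeeding. My strategy is to exploit the symmetry of $Z^h_{m+1}$ around $0$, which the symmetric truncation of the centered Gaussian preserves: writing
\[F(x) = \EE^1\Big[\Psi\big(A(x) + C\,Z^0_{m+1}(\omega^0),\, B(x)\, Z^h_{m+1}\big)\Big],\qquad \Psi(u,v) := \tfrac12\big(\varphi(u+v) + \varphi(u-v)\big),\]
the function $\Psi$ is convex on $\R^2$ (as a sum of convex functions), even in $v$, and non-decreasing in $u$. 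A direct computation — for $0 \le w_1 \le w_2$, express $u \pm w_1$ as convex combinations of $u - w_2$ and $u + w_2$ with weights $\tfrac{w_2 \mp w_1}{2 w_2}$, then apply the convexity of $\varphi$ and sum — further shows that $\Psi(u,\cdot)$ is non-decreasing in $|v|$. Setting $\widetilde\Psi(u,w) := \Psi(u,w)$ for $w \ge 0$, the restriction $\widetilde\Psi$ is therefore convex on $\R \times \R_+$ and non-decreasing in both arguments. Since $B(x) \ge 0$ gives $B(x)|Z^h_{m+1}| = |B(x) Z^h_{m+1}|$, I can rewrite
\[F(x) = \EE^1\Big[\widetilde\Psi\big(A(x) + C\,Z^0_{m+1}(\omega^0),\, B(x)\,|Z^h_{m+1}|\big)\Big],\]
and the convexity of $F$ then follows from the convexity of $A$ and $B$, the non-negativity of $|Z^h_{m+1}|$, the convexity and coordinate-monotonicity of $\widetilde\Psi$, and the linearity of $\EE^1$.

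The main obstacle is precisely the loss of convexity of $B(x)z$ depending on the sign of $z$; the symmetrization trick above circumvents it by reducing the problem to the regime $w \ge 0$, where $\widetilde\Psi$ is jointly convex and non-decreasing in both variables. The truncation of $Z_{m+1}$ plays a dual role here: its symmetry around $0$ is essential for the convexity argument, while its bounded support (through the explicit choice of $c_{h,\sigma}$) is essential for the monotonicity argument, by offsetting the negative contribution of $(B(x_2) - B(x_1))z$ against the positive contribution of $A(x_2) - A(x_1)$.
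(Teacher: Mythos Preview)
Your proof is correct. The paper's own proof simply observes that the common-noise term $\sqrt{h}\,\sigma^0(t_m,\mu)\,Z^0_{m+1}(\omega^0)$ acts as a constant shift of the argument of $\varphi$ and then invokes \cite[Proposition~4.3]{liu2021monotone} for the result without common noise; your argument is a self-contained version of that reference, using exactly the two expected ingredients --- the truncation bound $|Z^h_{m+1}|\le c_{h,\sigma}$ for monotonicity and the symmetry of $Z^h_{m+1}$ (via the symmetrization $\Psi(u,v)=\tfrac12(\varphi(u+v)+\varphi(u-v))$) for convexity.
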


\begin{proof}[Proof of Proposition \ref{prop:marginal_conditional_icv_euler_trunc}]
The proof of Proposition~\ref{prop:marginal_conditional_icv_euler_trunc} follows the same line of reasoning as that of Proposition~\ref{prop:marginalEuler}, with the key steps relying on Lemmas~\ref{lem:operator-for-icv} and~\ref{lem:icv-marginal-Euler}.
\end{proof}

\subsection{Convergence of the truncated Euler scheme and proof of Theorem \ref{thm:increasing_convex_order}}\label{sec:cvg-truncated}

The proof of Theorem~\ref{thm:increasing_convex_order} follows the same strategy as that of Theorem~\ref{thm:convex_order_cond}. It relies on two key ingredients: a global conditional increasing convex order of the truncated Euler scheme, stated in Proposition~\ref{prop:euler_cond_monotone_convex_order}, and the convergence of the truncated Euler scheme, established in Proposition~\ref{prop:cvg_truncated_Euler_scheme}.

\begin{prop}\label{prop:euler_cond_monotone_convex_order}
Assume that Assumptions~\ref{Ass:AssumptionI} and~\ref{Ass:Assumption-increasing-cv} hold for some \( p \in [2, \infty) \). 
Let $M$ be large enough such that $h=\frac{T}{M} \in \big(0, \frac{1}{2[b]_{\mathrm{Lip}_x}}\vee 1\big)$ and let $(\widetilde{X}^M_{t_m})_{0\leq m\leq M}$ and $(\widetilde{Y}^M_{t_m})_{0\leq m\leq M}$ be random variables defined by the truncated Euler schemes  \eqref{eq:EulerTruncatedX}-\eqref{eq:EulerTruncatedY}. Then, for a non-decreasing convex function $F:\R^{M+1}\rightarrow \R$  having a $p$-polynomial growth, one has
\[ \EE^1\big(F(\widetilde{X}^M_{t_0}, \ldots, \widetilde{X}^M_{t_M})\big) \leq \EE^1\big(F(\widetilde{Y}^M_{t_0}, \ldots, \widetilde{Y}^M_{t_M})\big),\, \PP^0-\text{almost-surely.} \]
\end{prop}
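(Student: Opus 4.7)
The plan is to adapt the backward-induction strategy used for Proposition~\ref{prop: funconvexEuler-cond} to the increasing convex order framework. For each fixed $\omega^0 \in \Omega^0$, I would construct two sequences of functions $\widetilde{\Phi}_m^{\,\omega^0},\widetilde{\Psi}_m^{\,\omega^0}:\R^{m+1}\times\big(\mathcal{P}_p(\R)\big)^{M-m+1}\to\R$, $m=0,\ldots,M$, by setting $\widetilde{\Phi}_M^{\,\omega^0}=\widetilde{\Psi}_M^{\,\omega^0}=F$ and defining backward
\begin{align*}
\widetilde{\Phi}_m^{\,\omega^0}(x_{0:m};\mu_{m:M}) &\coloneqq \big(Q_{m+1}^{\,\omega^0,h}\widetilde{\Phi}_{m+1}^{\,\omega^0}(x_{0:m},\cdot\,;\mu_{m+1:M})\big)\big(x_m,\mu_m,\sigma(t_m,x_m,\mu_m)\big),\\
\widetilde{\Psi}_m^{\,\omega^0}(x_{0:m};\mu_{m:M}) &\coloneqq \big(\widetilde{Q}_{m+1}^{\,\omega^0,h}\widetilde{\Psi}_{m+1}^{\,\omega^0}(x_{0:m},\cdot\,;\mu_{m+1:M})\big)\big(x_m,\mu_m,\theta(t_m,x_m,\mu_m)\big),
\end{align*}
where the auxiliary operator $\widetilde{Q}_{m+1}^{\,\omega^0,h}$ is defined as $Q_{m+1}^{\,\omega^0,h}$ in~\eqref{eq:def_Q_h_omega} but with $\beta$ in place of $b$, matching the truncated Euler dynamics of $\widetilde{Y}^M$.

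First, I would prove by backward induction on $m$ that $\widetilde{\Phi}_m^{\,\omega^0}(\cdot\,;\mu_{m:M})$ is non-decreasing and convex in $x_{0:m}$ with $p$-polynomial growth, and that $\widetilde{\Phi}_m^{\,\omega^0}(x_{0:m};\cdot)$ is non-decreasing in $\mu_{m:M}$ with respect to the increasing convex order (and similarly for $\widetilde{\Psi}_m^{\,\omega^0}$). The base case $m=M$ is the hypothesis on $F$. For the inductive step, the convexity and monotonicity in the spatial variable follow from Lemma~\ref{lem:icv-marginal-Euler} applied to the non-decreasing convex function $x_m\mapsto\widetilde{\Phi}_{m+1}^{\,\omega^0}(x_{0:m-1},x_m,\,\cdot\,;\mu_{m+1:M})$, using also the truncation $|Z^h_{m+1}|\le c_{h,\sigma}$ and the time-step condition $h\le 1/(2[b]_{\mathrm{Lip}_x})$ which ensure that the one-step map $x\mapsto x+hb(t_m,x,\mu_m)+\sqrt{h}\sigma(t_m,x,\mu_m)Z^h_{m+1}$ is non-decreasing $\omega^1$-pathwise. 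Monotonicity in $\mu_{m:M}$ with respect to the increasing convex order follows from Assumption~\ref{Ass:Assumption-increasing-cv}-(2),(3), combined with Lemma~\ref{lem:operator-for-icv}-(ii) and the non-negativity of $\sigma$.

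Next, I would establish by a second backward induction the pointwise comparison $\widetilde{\Phi}_m^{\,\omega^0}\le\widetilde{\Psi}_m^{\,\omega^0}$, chaining at each step three inequalities: (a) integrating the induction hypothesis against the common noise kernel gives $Q_{m+1}^{\,\omega^0,h}\widetilde{\Phi}_{m+1}^{\,\omega^0}\le Q_{m+1}^{\,\omega^0,h}\widetilde{\Psi}_{m+1}^{\,\omega^0}$; (b) $b\le\beta$ from Assumption~\ref{Ass:Assumption-increasing-cv}-(4), combined with the non-decreasing property of $\widetilde{\Psi}_{m+1}^{\,\omega^0}$ established in the first step, yields $Q_{m+1}^{\,\omega^0,h}\widetilde{\Psi}_{m+1}^{\,\omega^0}\le\widetilde{Q}_{m+1}^{\,\omega^0,h}\widetilde{\Psi}_{m+1}^{\,\omega^0}$; (c) from $0\le\sigma\le\theta$ and Lemma~\ref{lem:operator-for-icv}-(ii) applied to the non-decreasing convex function $\widetilde{\Psi}_{m+1}^{\,\omega^0}$, we obtain $\widetilde{Q}_{m+1}^{\,\omega^0,h}\widetilde{\Psi}_{m+1}^{\,\omega^0}(\cdot,\cdot,\sigma(t_m,x_m,\mu_m))\le\widetilde{Q}_{m+1}^{\,\omega^0,h}\widetilde{\Psi}_{m+1}^{\,\omega^0}(\cdot,\cdot,\theta(t_m,x_m,\mu_m))=\widetilde{\Psi}_m^{\,\omega^0}$.

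To close the argument, a direct analog of Lemma~\ref{lem:Phi_conditional} adapted to the truncated Euler scheme yields the conditional representation $\widetilde{\Phi}_0^{\,\omega^0}(X_0;\mathcal{L}^1(\widetilde{X}^M_{t_0})(\omega^0),\ldots,\mathcal{L}^1(\widetilde{X}^M_{t_M})(\omega^0))=\EE^1[F(\widetilde{X}^M_{t_0}(\omega^0,\cdot),\ldots,\widetilde{X}^M_{t_M}(\omega^0,\cdot))]$, and symmetrically for $\widetilde{Y}^M$. Combining this with the conditional marginal increasing convex order of Proposition~\ref{prop:marginal_conditional_icv_euler_trunc}, the hypothesis $X_0\iconright Y_0$, and the results of the two inductions, we obtain the chain
\begin{align*}
\EE^1 F\big(\widetilde{X}^M_{t_0}(\omega^0,\cdot),\ldots,\widetilde{X}^M_{t_M}(\omega^0,\cdot)\big) &= \EE^1\widetilde{\Phi}_0^{\,\omega^0}\big(X_0;\mathcal{L}^1(\widetilde{X}^M_{t_0})(\omega^0),\ldots\big)\\
&\le \EE^1\widetilde{\Phi}_0^{\,\omega^0}\big(Y_0;\mathcal{L}^1(\widetilde{X}^M_{t_0})(\omega^0),\ldots\big)\\
&\le \EE^1\widetilde{\Phi}_0^{\,\omega^0}\big(Y_0;\mathcal{L}^1(\widetilde{Y}^M_{t_0})(\omega^0),\ldots\big)\\
&\le \EE^1\widetilde{\Psi}_0^{\,\omega^0}\big(Y_0;\mathcal{L}^1(\widetilde{Y}^M_{t_0})(\omega^0),\ldots\big) = \EE^1 F\big(\widetilde{Y}^M_{t_0}(\omega^0,\cdot),\ldots,\widetilde{Y}^M_{t_M}(\omega^0,\cdot)\big),
\end{align*}
which is the claim, $\PP^0$-almost surely. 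The main obstacle will be to rigorously propagate the \emph{joint} convexity in $x_{0:m}$ across the backward step: Lemma~\ref{lem:icv-marginal-Euler} only provides convexity in the single spatial variable $x_m$, so the joint property must be extracted from the inductive joint convexity of $\widetilde{\Phi}_{m+1}^{\,\omega^0}$ together with the pathwise monotonicity of the truncated one-step map in $x_m$, which permits the composition to preserve joint convexity despite the sign of $Z^h_{m+1}$.
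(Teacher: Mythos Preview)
Your proposal is correct and follows exactly the approach the paper indicates (the paper omits the proof, referring to the strategy of Proposition~\ref{prop: funconvexEuler-cond} with the operators $\Phi_m^{\,\omega^0,h}$ of~\eqref{eq:defPhi_h_omega}). You supply more detail than the paper, and correctly introduce the second operator $\widetilde{Q}_{m+1}^{\,\omega^0,h}$ built on $\beta$ to accommodate $b\neq\beta$ in Assumption~\ref{Ass:Assumption-increasing-cv}.

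One clarification on your closing remark about joint convexity: the pathwise monotonicity of the truncated one-step map in $x_m$ is what propagates the \emph{non-decreasing} property of $\widetilde{\Phi}_m^{\,\omega^0}$, but it does \emph{not} by itself yield joint convexity in $x_{0:m}$ (for $Z^h_{m+1}<0$ the one-step map is concave in $x_m$, so a pathwise composition argument fails). The correct route is at the operator level: extend Lemma~\ref{lem:operator-for-icv}-(i) to show that $(x_{0:m},u)\mapsto \big(Q_{m+1}^{\,\omega^0,h}\widetilde{\Phi}_{m+1}^{\,\omega^0}(x_{0:m},\cdot\,;\mu_{m+1:M})\big)(x_m,\mu_m,u)$ is jointly convex (same proof, using that $\widetilde{\Phi}_{m+1}^{\,\omega^0}$ is jointly convex and non-decreasing, and $b$ convex in $x$), then substitute $u=\sigma(t_m,x_m,\mu_m)$ and use that this operator is non-decreasing in $u$ on $\R_+$ (Lemma~\ref{lem:operator-for-icv}-(ii)) together with $\sigma\ge 0$ convex. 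The symmetry of $Z^h_{m+1}$ is what makes the operator-level monotonicity in $u$ work, not a pathwise argument.
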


We will omit the proof of the global conditional increasing convex order since it closely follows the approach used in Proposition~\ref{prop: funconvexEuler-cond}, with a slight modification in the definition of the operators. Specifically, the operators $\Phi_m^{\,\omega^0}, m=0, \ldots, M$, defined in~\eqref{eq:defPhi1}-\eqref{eq:defPhi2} will be replaced by the operators \( \Phi_m^{\,\omega^0,h} : \R^{m+1}  \times \big(\mathcal{P}_{p}(\R)\big)^{M-m+1}\rightarrow \R, \quad m=0, \ldots, M-1 \notag \), which incorporate the truncated Brownian motion increments and are defined as follows:
\begin{align}\label{eq:defPhi_h_omega}
\Phi_{m}^{\,\omega^0,h}(x_{0: m}; \mu_{m: M}) &\coloneqq\big(Q_{m+1}^{\,\omega^0,h}\,\Phi_{m+1}^{\,\omega^0,h}(x_{0: m}, \,\cdot\,; \mu_{m+1: M})\big)\big(x_{m}, \mu_{m}, \sigma(t_m, x_{m}, \mu_{m})\big)\\
&=\EE^1 \Big[\Phi_{m+1}^{\,\omega^0,h}\big(x_{0: m}, x_m+ h\,b(t_m,x_{m}, \mu_{m})+\sqrt{h}\,\sigma(t_m,x_{m}, \mu_{m})Z^h_{m+1}\nonumber\\
&\qquad +\sqrt{h}\,\sigma^0(t_m, \mu_m)Z^0_{m+1}(\omega^0); \mu_{m+1: M}\big)\Big].\nonumber
\end{align}

\begin{proof}[Proof of Theorem \ref{thm:increasing_convex_order}]
The proof closely follows that of Theorem \ref{thm:convex_order_cond}-$(a)$. The main difference lies in the use of intermediate results, which are adapted to the truncated Euler schemes \eqref{eq:EulerTruncatedX}-\eqref{eq:EulerTruncatedY}. Let $M\in \mathbb{N}^*$ and $F: \big(\mathcal{C}([0, T], \R), \vertii{\cdot}_{\sup}\big) \rightarrow \R$ be a convex functional with $r$-polynomial growth and non-decreasing  w.r.t. the pointwise partial order \eqref{eq:pointwise_partial_order}. Let $(\widetilde{X}^M_{t_m})_{0\leq m\leq M}$ and $(\widetilde{Y}^M_{t_m})_{0\leq m\leq M}$ denote the truncated Euler scheme \eqref{eq:EulerTruncatedX} and \eqref{eq:EulerTruncatedY}. Let $\widetilde{X}^M = (\widetilde{X}^M_{t})_{t\in[0,T]}$ and $\widetilde{Y}^M = (\widetilde{Y}^M_{t})_{t\in[0,T]}$ be their respective continuous extensions. From Proposition \ref{prop:cvg_truncated_Euler_scheme}, there exists a constant $C$ not depending on $M$ such that
\begin{align} \label{eq:bound_truncated_euler_scheme}
    \Big\| \sup_{t\in[0,T]} |\widetilde{X}^M_t| \Big\|_{p} &= \Big\| \sup_{0\leq m\leq M} |\widetilde{X}^M_{t_m}| \Big\|_{p} \leq C(1+\|X_0\|_p)\dd , \\
    \Big\| \sup_{t\in[0,T]} |\widetilde{Y}^M_t| \Big\|_{p} &= \Big\| \sup_{0\leq m\leq M} |\widetilde{Y}^M_{t_m}| \Big\|_{p} \leq C(1+\|Y_0\|_p)\dd . \notag
\end{align}
We define the following real-valued function $F_M : x_{0:M} \in \R^{M+1} \longmapsto F_M(x_{0:M}) = F(i_M(x_{0:M}))$ which is convex and non-decreasing w.r.t. the pointwise partial order \eqref{eq:pointwise_partial_order}, since $i_M$ is a linear application on $\R^{M+1}$, and has linear growth by \eqref{supinterpolator}. Similarly to the proof of Theorem \ref{thm:convex_order_cond}-$(a)$, it follows from Proposition \ref{prop:euler_cond_monotone_convex_order} and the definition of $F_M$ that 
\begin{equation}\label{eq:conv_order_IM_E1}
\begin{aligned}
\EE^1&\left[F\Big(I_{M}\big(\widetilde{X}^{M}(\omega^0, \cdot )\big)\Big)\right]=\EE^1 \left[F\Big(i_{M}\big((\widetilde{X}^M_{0}, \ldots, \widetilde{X}^M_{M})\big)\Big)\right]\nonumber\\
&=\EE^1 \left[F_{M}\big(\widetilde{X}^M_{0}(\omega^0, \cdot ), \ldots, \widetilde{X}^M_{M}(\omega^0, \cdot )\big)\right]\leq \EE^1 \left[F_{M}\big(\widetilde{Y}^M_{0}(\omega^0, \cdot ), \ldots, \widetilde{Y}^M_{M}(\omega^0, \cdot )\big)\right]\nonumber\\
&=\EE^1 \left[F\Big(i_{M}\big((\widetilde{Y}^M_{0}(\omega^0, \cdot ), \ldots, \widetilde{Y}^M_{M}(\omega^0, \cdot ))\big)\Big)\right]=\EE ^1\left[F\Big(I_{M}\big(\widetilde{Y}^M(\omega^0, \cdot )\big)\Big)\right].
\end{aligned}
\end{equation}
Proposition~\ref{prop:cvg_truncated_Euler_scheme} and Lemma~\ref{Imlemma} imply that there exists a subset $\tilde{\Omega}^0 \subset \Omega^0$ such that $\PP^0(\tilde{\Omega}^0)=1$ and for every $\omega^0\in \tilde{\Omega}^0$, $F\big(I_{\phi(M)}(\widetilde{X}^{\phi(M)}(\omega^0, \cdot ))\big)$ weakly converges towards $F(X(\omega^0, \cdot ))$. Moreover, $F$ has an $r$-polynomial growth, then, for every $\omega^0 \in \tilde{\Omega}^0$,
\[ \bigg|F\Big(I_M\big((\widetilde{X}^{M}(\omega^0, \cdot )\big)\Big)\bigg| \leq C\bigg(1+\bigg\|I_M\big(\widetilde{X}^{M}(\omega^0, \cdot )\big)\bigg\|_{\sup}^{r}\bigg) \leq C\big(1+\big\|\widetilde{X}^{M}(\omega^0, \cdot )\big\|_{\sup}^{ r}\big). \]
Proposition \ref{prop:cvg_truncated_Euler_scheme} implies that for every $\omega^0\in \bar{\Omega}^0$,
\[\EE^1 \left[\vertii{\widetilde{X}^{\phi(M)}(\omega^0, \cdot )}_{\sup}^{r}\right]\rightarrow \EE^1 \left[ \vertii{X(\omega^0, \cdot )}_{\sup}^{r}\right]\quad \text{as}\quad M\rightarrow +\infty. \]
Hence, as $M \to +\infty$, we have $\EE^1 \big[F\big(I_{\phi(M)}(\widetilde{X}^{\phi(M)}(\omega^0, \cdot ))\big)\big] \to \EE^1 \big[F(X(\omega^0, \cdot ))\big]$.  
A similar reasoning shows that $\EE^1 \big[F\big(I_{\phi(M)}(\widetilde{Y}^{\phi(M)}(\omega^0, \cdot ))\big)\big] \to \EE^1 \big[F(Y(\omega^0, \cdot ))\big]$ for every $\omega^0 \in \tilde{\Omega}^0$.  
Passing to the limit $M \to +\infty$ in inequality \eqref{eq:conv_order_IM_E1}, we deduce that
\[ \EE^1\big[F\big(X(\omega^0, \cdot )\big)\big] \leq \EE^1\big[F\big(Y(\omega^0, \cdot )\big)\big],\quad\PP^0-\text{almost-surely}. \hfill \qedhere\]
\end{proof}

\dd






\begin{cor}\label{cor:compare-E-icv}
Assume that Assumptions \ref{Ass:AssumptionI} and \ref{Ass:Assumption-increasing-cv} hold for some $p\in[2,+\infty)$. Moreover, we assume that 
\begin{enumerate}[(1)]
\item There exist two functions $\widetilde{\beta}, \widetilde{\theta}$ 
defined on $[0,T]\times \R\times \R$ and valued in $\R$ such that, for every $t\in [0,T]$, $x\in \R$, $\mu\in \calP_p(\R)$,
\begin{equation*}
    \beta(t,x,\mu) = \int_{\R} \widetilde{\beta}(t,x,y)\mu(dy), \quad\theta(t,x,\mu) = \int_{\R} \widetilde{\theta}(t,x,y)\mu(dy).
\end{equation*}
Moreover, 
the function $\widetilde{\beta}$ is affine in $(x,y)$, and the function $\widetilde{\theta}$ is convex in $(x,y)$.
\item  The functions $\sigma^0$ and $\theta^0$ depend only on $t$ and for every $t\in[0,T]$, we have $|{\sigma}^0(t)|\leq |{\theta}^0(t)|$.
\end{enumerate}
Then, for a non-decreasing convex function $F:\mathcal{C}([0,T],\R) \rightarrow \R$  with $p$-polynomial growth, we have 
\[ \EE\big[F(X)\big] \leq \EE\big[F(Y)\big]. \]
\end{cor}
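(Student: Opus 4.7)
The plan is to proceed in two stages by introducing an intermediate McKean--Vlasov process that \emph{bridges} between $X$ and $Y$, handling the difference in drift/idiosyncratic diffusion and the difference in common-noise coefficient separately. Concretely, I would introduce $Z = (Z_t)_{t \in [0,T]}$ defined by
\begin{equation*}
  dZ_t = \beta(t, Z_t, \mathcal{L}^1(Z_t))\, dt + \theta(t, Z_t, \mathcal{L}^1(Z_t))\, dB_t + \sigma^0(t)\, dB_t^0, \qquad Z_0 = Y_0,
\end{equation*}
so that $X$ and $Z$ share the \emph{same} common-noise coefficient $\sigma^0(t)$ (depending only on $t$, hence trivially constant in $\mu$ w.r.t.\ the increasing convex order), while $Z$ and $Y$ share the \emph{same} drift $\beta$ and idiosyncratic diffusion $\theta$. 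The pair $(X,Z)$ then satisfies Assumptions~\ref{Ass:AssumptionI} and~\ref{Ass:Assumption-increasing-cv} (with $Z$ playing the role of $Y$: $b \le \beta$, $0 \le \sigma \le \theta$, and $X_0 \iconright Z_0 = Y_0$), so that Theorem~\ref{thm:increasing_convex_order} yields $\EE[F(X)\mid B^0] \le \EE[F(Z) \mid B^0]$ almost surely, and integrating out $B^0$ gives $\EE[F(X)] \le \EE[F(Z)]$.

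It then remains to prove $\EE[F(Z)] \le \EE[F(Y)]$, which is the heart of the argument: here $Z$ and $Y$ differ only through $\sigma^0(t)$ versus $\theta^0(t)$, with $|\sigma^0(t)| \le |\theta^0(t)|$, and the convex-order machinery conditional on $B^0$ is of no help (conditionally, the common-noise increments are deterministic). Instead, I would imitate the proof of Corollary~\ref{cor:conv_order_standard} by introducing the associated particle systems $(Z^{1,N},\ldots,Z^{N,N})$ and $(Y^{1,N},\ldots,Y^{N,N})$, driven by the same Brownian motions $B^1,\ldots,B^N,B^0$, and rewriting each as a single SDE in $\R^N$. Using hypothesis~(1) together with Lemma~\ref{lem:prop_phi}, the common drift $B(t,\bm x)$ is affine in $\bm x$ and the idiosyncratic diffusion block $\operatorname{diag}(\phi^i_{\widetilde\theta}(t,\bm x))$ is convex and non-negative; using hypothesis~(2) together with Lemma~\ref{lem:matrix-order-block}, the two full diffusion matrices $\Sigma_Z(t,\bm x)$ and $\Sigma_Y(t,\bm x)$ (obtained by appending the columns $\sigma^0(t)\mathbf 1_N$ and $\theta^0(t)\mathbf 1_N$ respectively) satisfy $\Sigma_Z(t,\bm x) \preceq \Sigma_Y(t,\bm x)$. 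Applied to the non-decreasing convex functional $F \circ \pi^1$ on $\mathcal{C}([0,T], \R^N)$ (non-decreasing in the first coordinate, constant in the others, hence non-decreasing for the coordinate-wise partial order on $\R^N$), the $\R^N$-valued analogue of Theorem~\ref{thm:increasing_convex_order} for coupled SDEs without measure dependence yields
\begin{equation*}
  \EE\bigl[F(Z^{1,N})\bigr] \le \EE\bigl[F(Y^{1,N})\bigr].
\end{equation*}

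Finally, passing to the limit $N \to \infty$ by the conditional propagation of chaos (i.e.\ the $L^p$-convergence of $Z^{1,N}$ to $Z$ and $Y^{1,N}$ to $Y$), together with the $p$-polynomial growth of $F$ and uniform $L^p$-bounds on the particle systems, gives $\EE[F(Z)] \le \EE[F(Y)]$ exactly as in the proof of Corollary~\ref{cor:conv_order_standard}, and the desired inequality follows. The main obstacle is establishing the multidimensional increasing-convex-order for the particle systems: while Proposition~\ref{prop:standard-conv-order-particle-sys-new} gives this in the standard convex case, the increasing version requires rerunning the truncated Euler scheme argument of Section~\ref{sec:proof-incre-conv-order} in the $\R^N$-valued setting, verifying that the relevant propagation of coordinate-wise monotonicity and convexity survives under affine drift, matrix-convex non-negative diffusion, and the matrix partial order $\preceq$.
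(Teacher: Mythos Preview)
Your overall plan coincides with the paper's: introduce the intermediate process $Z$ (the paper calls it $\widetilde Y$) with drift $\beta$, idiosyncratic diffusion $\theta$, common-noise coefficient $\sigma^0(t)$, and $Z_0=Y_0$; then apply Theorem~\ref{thm:increasing_convex_order} to the pair $(X,Z)$ and the particle-system machinery to the pair $(Z,Y)$.

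Where you diverge from the paper is in the second step, and this is where you overcomplicate. Because $Z_0=Y_0$ \emph{exactly}, and because $Z$ and $Y$ share the \emph{same} drift $\beta$ and the \emph{same} idiosyncratic diffusion $\theta$, the pair $(Z,Y)$ satisfies Assumption~\ref{Ass:Assumption-standard-cv-new} directly: conditions (1), (2), (4) are trivially met (same functions, same initial law hence $Z_0\conright Y_0$), and condition (3) is precisely hypothesis~(2) of the corollary. Hence Proposition~\ref{prop:standard-conv-order-particle-sys-new} together with propagation of chaos (i.e.\ Corollary~\ref{cor:conv_order_standard}) gives $\EE[F(Z)]\le\EE[F(Y)]$ for \emph{every} convex $F$ with $p$-polynomial growth, monotone or not. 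This is exactly what the paper invokes, in one line.

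So the ``main obstacle'' you flag --- building an $\R^N$-valued \emph{increasing} convex order via a truncated Euler scheme in $\R^N$ --- simply does not arise: the standard convex order already covers the second comparison, since the only difference between $Z$ and $Y$ lies in the common-noise coefficient, and that is precisely what Proposition~\ref{prop:standard-conv-order-particle-sys-new} is designed to handle without any monotonicity. Your argument would work, but it is substantially more laborious than necessary.
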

\begin{proof}[Proof of Corollary \ref{cor:compare-E-icv}]
The proof is straightforward once we introduce the intermediate process $\widetilde{Y}=(\widetilde{Y}_t)_{t\in[0,T]}$ defined by the following conditional McKean-Vlasov equation with the common noise,
\begin{equation}\label{eq:def_MKV_Y_tilde}
\d\widetilde{Y}_t = \beta\big(t,\widetilde{Y}_t, \calL^1(\widetilde{Y}_t)\big)\d t + \theta\big(t,\widetilde{Y}_t, \calL^1(\widetilde{Y}_t)\big)\d B_t + \sigma^0(t)\d B^0_t,\quad \widetilde{Y}_0=Y_0.
\end{equation}
Theorem \ref{thm:increasing_convex_order} implies  $\EE [F(X)]\leq \EE [F(\widetilde{Y})]$, and Proposition \ref{prop:standard-conv-order-particle-sys-new} gives  $\EE [F(\widetilde{Y})]\leq \EE[F(Y)]$, which completes the proof. 
\end{proof}

\section{Applications}\label{sec:applications}

Convex order is mainly used to provide upper and lower bounds for the non-linear dynamics of~\eqref{eq:defMKV-CN-X} by mere assumptions on the coefficients. As a result, Theorem \ref{thm:convex_order_cond} and Remark \ref{rem:G-and-symmetric-setting}\footnote{The application of Theorem \ref{thm:increasing_convex_order} follows the same approach as the applications discussed in this section.} 
allow one to
\begin{enumerate}
    \item separate two scaled McKean-Vlasov processes sharing a common noise by a scaled one so that Assumptions \ref{Ass:AssumptionI} and \ref{Ass:AssumptionII} are satisfied;
    \item upper bound and lower bound a scaled McKean-Vlasov process $(X_t)_{t \ge 0}$ by two scaled McKean-Vlasov processes $(\tilde X_t^u, \tilde X_t^d)_{t \ge 0}$ sharing a common noise with $(X_t)_{t \ge 0}$ and such that Assumptions \ref{Ass:AssumptionI} and \ref{Ass:AssumptionII}  are satisfied. 
\end{enumerate}


More precisely, consider the following system of four equations sharing a common noise:
{\small 
\begin{align}\label{eq:cvx_bounding_partitioning}
\left\{
\begin{array}{ll}
&\d X_t^{\sigma_1}= \Big(\alpha(t) X_t^{\sigma_1} + \beta(t, \colaw[X_t^{\sigma^1}])\Big) \, \d t + \sigma_1\big(t,X_t^{\sigma_1}, \calL^1(X_t^{\sigma_1}) \big) \, \d B_t + \sigma^0(t, \calL^1(X_t^{\sigma_1})) \, \d B_t^0, \\
&\d Y_t^{\theta_1}= \Big(\alpha(t) Y_t^{\theta_1} + \beta(t, \colaw[Y_t^{\theta^1}]) \Big) \, \d t + \theta_1\big(t,Y_t^{\theta_1}, \calL^1(Y_t^{\theta_1}) \big) \, \d B_t + \sigma^0(t, \calL^1(Y_t^{\theta_1})) \, \d B_t^0,  \\
&\d X_t^{\sigma_2}=\Big(\alpha(t) X_t^{\sigma_2} + \beta(t, \colaw[X_t^{\sigma^2}])\Big) \, \d t + \sigma_2\big(t,X_t^{\sigma_2}, \calL^1(X_t^{\sigma_2}) \big) \, \d B_t + \sigma^0(t, \calL^1(X_t^{\sigma_2})) \, \d B_t^0,\\
&\d Y_t^{\theta_2}= \Big(\alpha(t) Y_t^{\theta_2} + \beta(t, \colaw[Y_t^{\theta_2}])\Big) \, \d t + \theta_2\big(t,Y_t^{\theta_2}, \calL^1(Y_t^{\theta_2}) \big) \, \d B_t + \sigma^0(t, \calL^1(Y_t^{\theta_2})) \, \d B_t^0, 
\end{array}
\right.
\end{align}
}
and assume that all coefficient functions $\alpha$, $\beta$, $\sigma^0$, $\sigma_1$, $\theta_1$, $\sigma_2$, $\theta_2$ are such that Assumptions~\ref{Ass:AssumptionI} and~\ref{Ass:AssumptionII} are satisfied, and that
\[ X_0^{\sigma_1} \conright Y_0^{\theta_1} \conright X_0^{\sigma_2} \conright Y_0^{\theta_2}.  \]
For all time $t \in [0,T]$ we write $\mu_t^{\sigma_1}$ (resp. $\nu_t^{\theta_1}$, $\mu_t^{\sigma_2}$, $\nu_t^{\theta_2}$) for the probability distribution of $X_t^{\sigma_1}$ (resp. $Y_t^{\theta_1}$, $X_t^{\sigma_2}$, $Y_t^{\theta_2}$). 

In this setting, using the notations $F,G$ from Theorem \ref{thm:convex_order_cond} with the corresponding assumptions on the functions $F$ and $G$, our results entail the following inequalities
\begin{itemize}
    \item Convex bounding:
    \begin{align}
        \left\{
        \begin{array}{ll}
          \EE \big[F(X^{\sigma_1}) \big] \le \EE \big[F(Y^{\theta_1}) \big] \le \EE \big[F(X^{\sigma_2}) \big], \\
          \EE G \big( X^{\sigma_1}, (\mu_t^{\sigma_1})_{t \in [0,T]} \big) \le \EE G \big( Y^{\theta_1}, (\nu_t^{\theta_1})_{t \in [0,T]} \big) \le 
          \EE G \big( X^{\sigma_2}, (\mu_t^{\sigma_2})_{t \in [0,T]} \big),
        \end{array}
        \right. 
    \end{align}
    
    \item Convex partitioning: 
    \begin{align}
        \left\{
        \begin{array}{ll}
          \EE \big[F(Y^{\theta_1}) \big]  \le \EE \big[F(X^{\sigma_2}) \big] \le \EE \big[F(Y^{\theta_2}) \big], \\
          \EE G \big( Y^{\theta_1}, (\nu_t^{\theta_1})_{t \in [0,T]} \big) \le 
          \EE G \big( X^{\sigma_2}, (\mu_t^{\sigma_2})_{t \in [0,T]} \big) \le \EE G \big( Y^{\theta_2}, (\nu_t^{\theta_2})_{t \in [0,T]} \big).
        \end{array}
        \right. 
    \end{align}
    
\end{itemize}
We emphasize that Assumption \ref{Ass:AssumptionII} (2)-(3) need only to hold for $\sigma_1, \sigma_2$, which allows us to choose two simple functions $\sigma_1, \sigma_2$ (e.g. independent of the measure argument) to derive upper and lower bounds, and to partition between two dynamics of McKean-Vlasov type with common noise, see below for an application in the context of control theory.

Our findings on convex order also yield corresponding results for the solutions of the mean-field SPDE associated to the McKean-Vlasov equation with common noise. As is well-known~\cite{Lacker_Shkolnikov_2022, Hammersley_2021}, the conditional density $(\mu_t^{\sigma_1})_{t \in [0,T]}$ introduced above solves the following stochastic Fokker-Planck equation:
\begin{align}
\label{eq:spde}
    \partial_t \mu_t^{\sigma_1} &= - \mathrm{Div}_x \big[ \alpha(t) x + \beta(t,\mu_t^{\sigma_1}) \mu_t^{\sigma_1} \big] + \frac12 \sum_{i,j = 1}^d \partial^2_{x_i x_j} \big[ (\sigma_1 \sigma_1^T)_{ij} (t,x,\mu_t^{\sigma_1}) \mu_t^{\sigma_1} \big] \\
    &\qquad - \mathrm{Div}_x \big[ \sigma^0(t,\mu_t^{\sigma_1}) \mu_t^{\sigma_1}] \cdot W^0_t, \nonumber 
\end{align}
where $(W^0_t)_{t \in [0,T]}$ is an $\R^q$-valued white noise process which is measurable with respect to the filtration generated by $(B^0_t)_{t \in [0,T]}$ -- informally corresponding to the derivative of the latter. Similarly, one could write corresponding SPDEs for the conditional distributions $(\nu_t^{\theta_1})_{t\in [0,T]}$, $(\mu_t^{\sigma_2})_{t \in [0,T]}$ and $(\nu_t^{\theta_2})_{t \in [0,T]}$. Theorems~\ref{thm:convex_order_cond} and~\ref{thm:increasing_convex_order} thus provide a convex order for solutions of SPDEs of the form~\eqref{eq:spde} with different coefficients, conditional on the realization of the common white noise $(W^0_t)_{t \in [0,T]}$. This implies a \textit{quenched} result on the expectation (with respect to the realisation of $(W^0_t)_{t \in [0,T]}$) of convex functionals of such solutions. 

\subsection{Stochastic control problem}
\label{subsec:appli_control}

In this section, we show how the convex bounding and convex partitioning from the previous section allows to derive bounds on more intricate dynamics based on ones allowing an explicit solution. To provide a simple example, we leverage the known properties of linear quadratic stochastic McKean-Vlasov control problem, as studied e.g. in~\cite{Pham2017}. For simplicity we consider two one-dimensional McKean-Vlasov dynamics with common noise given, for $t \ge 0$, by
\begin{align*}
\left\{ 
\begin{array}{ll}
    &\d X_t^{x,\alpha} = \Big( b_0 + b X_t^{x,\alpha} + \bar b \mathbb{E}^1\big[X_t^{x,\alpha} \big] + c \alpha(X_t^{x,\alpha}) \Big) \, \d t + \bar \sigma \, \d B_t + \sigma^0 \, \d B_t^0,  \\
    &\d Y_t^{x,\alpha} = \Big( b_0 + b Y_t^{x,\alpha} + \bar b \mathbb{E}^1\big[Y_t^{x,\alpha} \big] + c \alpha(Y_t^{x,\alpha}) \Big) \, \d t + \theta(t, Y_t^{x,\alpha}, \mathcal{L}^1(Y_t^{x,\alpha})) \, \d B_t + \sigma^0 \, \d B_t^0, \\
    \end{array}
    \right.
\end{align*}
with $X_0^{\alpha} = Y_0^{\alpha} = x \in \RR$, and the cost function 
\begin{align*}
    &\mathcal{I}^X(x,\alpha) = \EE^1 \Big[ \int_0^T \big( q_2 |X_t^{x,\alpha}|^2 + \bar q_2 \EE^1[X_t^{x,\alpha}]^2 + r_2 \alpha_t^2 \big) \, \d t + p_2 |X_T^{x,\alpha}|^2 + \bar p_2 \EE^1[X_T^{x,\alpha}]^2 \Big],
\end{align*}
with admissible control $(\alpha_t)_{t \in [0,T]}$ and with the analoguous definitions for $\mathcal{I}^Y$.
We assume that $b_0, b, \bar b, c$ are deterministic constants, that $\sigma^0, q_2, \bar q_2, r_2, p_2, \bar p_2 \ge 0$ are non-negative deterministic constants with $r_2 > 0$, and further that $\theta$ satisfies Assumption~\ref{Ass:AssumptionI}. Assume also that for all $(t, x, \mu)$ in $[0,T] \times \RR \times \mathcal{P}_2(\RR)$,
\begin{align*}
    0 \le \theta(t,x,\mu) \le \bar \sigma.
\end{align*}
In this case, using \cite[Equation (5.13) and Remark 5.1]{Pham2017}, the optimal control $(\alpha_t)_{t \ge 0}$ minimizing $\mathcal{I}^X$ has a closed form given by
\begin{align*}
    \alpha_t^\star(t,X_t) = - 2 \Gamma_t (X_t - (1+c) \EE^1[X_t] \big) - c \gamma_t 
\end{align*}
where $(\Gamma_t, \gamma_t)_{t \ge 0}$ belonging to $C^1([0,T], \RR_+)$ solve the Riccati equations (which admit unique solutions by our choice of coefficients). 
Those regularity properties ensure that the associated trajectories $(X_t^{x,\alpha^\star}, Y_t^{x,\alpha^\star})_{t \in [0,T]}$ satisfy Assumptions \ref{Ass:AssumptionI}-\ref{Ass:AssumptionII}, hence, by Theorem \ref{thm:convex_order_cond}, using the convexity of the cost functions,
\begin{align*}
    \inf_{\alpha} \mathcal{I}^{X}(x,\alpha) = \mathcal{I}^{X}\big(x,(\alpha^\star(t,X_t))_{t \in [0,T]}\big) \ge \mathcal{I}^{Y}\big(x,(\alpha^\star(t,Y_t))_{t \in [0,T]}\big) \ge \inf_{\alpha} \mathcal{I}^{Y}(x,\alpha) \qquad \hbox{a.s.} 
\end{align*}
Defining the value function as $v^X(x) := \inf_\alpha \mathcal{I}^X(x,\alpha)$ and $v^Y(y) := \inf_\alpha \mathcal{I}^Y(x,\alpha)$, we conclude for all $x \in \RR$: \[ v^X(x) \ge v^Y(x). \]

\subsection{Applications to the interbank systemic risk model}

As an application of our result regarding the standard convex order for the particle system (see further Section \ref{sec: conv-particle-sys}), we consider an interbank systemic risk model without central bank introduced by Carmona, Fouque and Sun \cite{Carmona_Fouque_Sun_2015}: consider $N \ge 1$ banks and let $X^{i,N}_t$ be the log-monetary reserve of bank $1 \le i \le N$ at time $t \ge 0$. The dynamics are given by
\begin{align}
\label{eq:CFS}
    \d X^{i,N}_t = \frac{a}{N} \sum_{j=1}^N \big(X^{j,N}_t - X^{i,N}_t \big) \d t + \sigma \big( \rho \d B^0_t + \sqrt{1-\rho^2} \d B^i_t \big), 
\end{align}
where $a \ge 0$ is the exchange rate, modulating the lending/borrowing actions of the bank trying to align with the mean log-monetary reserve,  $((B^i_t)_{t \ge 0})_{1 \le i \le N}$ is a sequence of i.i.d. Brownian motions, and $(B^0_t)_{t \ge 0}$ is a Brownian motion independent of everything else. The parameters $\sigma > 0$ and $|\rho| \le 1$ are given, and for simplicity we set $X^{i,N}_0 = 0$ for all $1 \le i\le N$. 

We will focus on the expected size of default in the case of systemic event on a time horizon $[0,T]$ with $T > 0$, that is
\begin{align*}
    ESD(X^{1,N}, \ldots, X^{N,N}) := \mathbb{E} \Big[\max_{t \in [0,T]} (D - \bar X^N_t)_+ \Big]
\end{align*}
where $D < 0$ is the default level and where we used the notation $\bar X^N_t = \frac1{N} \sum_{i=1}^N X^{i,N}_t$. The idea of focusing on such quantity is reminiscent of the Tail-Var caracterisation of convex order by Denuit et al.~\cite{Denuit_2005}: for two random variables $Z, \tilde Z$, one has $Z \conright \tilde Z$ if and only if $\mathbb{E}[Z] = \mathbb{E}[\tilde Z]$ and for all $p \in (0,1)$, $\mathrm{TVaR}[Z;p] \le \mathrm{TVaR}[\tilde Z;p]$ where for all random variable $Z$, all $p \in (0,1)$,
\begin{align*}
    \mathrm{TVaR}[Z;p] := \frac1{1-p} \int_p^1 F_Z^{-1}(p) \, \d p, \qquad F_Z^{-1}(p):= \inf\{x \in \RR: F_Z(x) \ge p\};
\end{align*}
$F_Z^{-1}$ is thus the generalized inverse quantile of $Z$ (also called $\mathrm{VaR}$).

Carmona-Fouque-Sun focused on the probability of default in the system, exhibiting the following key features:
\begin{itemize}
\item when $\rho = 0$ (no common noise), lending and borrowing improve stability but also contribute to systemic risk. In fact, as $a$ grows, the mean number of defaults lowers, but when such event occurs, it tends to affect more banks (this is called a ``swarming to default'' event). In particular the probability of systemic event $\{ \min_{t\in [0,T]} \frac1{N} \sum_{i=1}^N X^{i,N}_t < D\}$ where $D < 0$ (e.g. $D = 0.7$ in the simulations of~\cite[Figures 2.1-2.3]{Carmona_Fouque_Sun_2015}) is independent of $a$ and decreases to $0$ exponentially fast as $N \to \infty$; 
    \item common noise increases systemic risk as $N \to \infty$. In fact, using the particular combination of the noise in~\eqref{eq:CFS}, the authors compute the probability of systemic event and show that its limit as $N \to \infty$ is no longer zero. 
\end{itemize}
While the probability of systemic event is not a convex functional of the particles, focusing on $ESD$ instead is both relevant from the economical viewpoint, as it corresponds to a key quantity of interest to the regulator, and adapted to our case, because of convexity. In particular, we easily deduce the following from Proposition~\ref{prop:standard-conv-order-particle-sys-new} and Remark \ref{rem:change-sigma-i}:
\begin{cor}
\label{cor:CFS}
    Let $\sigma > 0$, $|\rho|\le 1$ and $N \ge 1$ and consider the particle system
    \begin{align*}
        \d Y^{i,N}_t = \frac{a}{N} \sum_{j=1}^N \big(Y^{j,N}_t - Y^{i,N}_t \big) \d t + \sigma^i (Y^{i,N}_t) \d B^{i}_t + \sigma^0  \d B^{0}_t, 
    \end{align*}
    with $0 < \sigma^i \le \sigma \sqrt{1-\rho^2}$ for all $1 \le i \le N$ and $\sigma^0 \le \sigma \rho$, where $(B^{i})_{1 \le i \le N}$ are i.i.d. Brownian motions and where $B^0$ is a Brownian motion independent of everything else. Assume further that $Y^{i,N}_0 = 0$ for all $1 \le i \le N$. Then, for $(X^{1,N},\ldots, X^{N,N})$ given by~\eqref{eq:CFS}, there holds
    \begin{align}
        \mathrm{ESD}(Y^{1,N},\ldots, Y^{N,N}) \le \mathrm{ESD}(X^{1,N}, \ldots, X^{N,N}). 
    \end{align}
\end{cor}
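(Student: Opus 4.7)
The plan is to realize $\mathrm{ESD}$ as a convex functional of the $N$ particle paths and then invoke Proposition~\ref{prop:standard-conv-order-particle-sys-new} together with Remark~\ref{rem:change-sigma-i}. First, I would introduce the functional $F:(\mathcal{C}([0,T],\R))^N\to\R$ defined by
\[
F(x^1,\ldots,x^N) := \max_{t\in[0,T]}\Big(D - \tfrac{1}{N}\sum_{i=1}^N x^i_t\Big)_{+},
\]
and check that it is convex (as the composition of the affine averaging map $(x^i)_i\mapsto \bar x$, the convex function $u\mapsto(D-u)_+$, and the sup in $t$, which preserves convexity), $\|\cdot\|_{\sup}$-continuous, and of linear growth in each of its arguments, so in particular of $p$-polynomial growth for any $p\geq 1$. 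Thus $F$ satisfies the hypotheses required by the particle-system convex order result.

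Next, I would verify that the two particle systems fit the framework of Proposition~\ref{prop:standard-conv-order-particle-sys-new}. The drift in both systems is identical and equal to $b(t,x,\mu)=a\big(\int y\,\mu(\ddr y)-x\big)$, which is of Vlasov form, affine in $x$, and depends on $\mu$ only through its first moment. Since the convex order preserves expectation (the maps $\xi\mapsto\pm\xi$ being convex), $b$ is constant in $\mu$ with respect to the convex order, which is precisely the structural hypothesis needed. On the diffusion side, the idiosyncratic volatility of the $X$-system is the constant $\sigma\sqrt{1-\rho^2}$ and its common-noise coefficient is the constant $\sigma\rho$, while by hypothesis the corresponding coefficients of the $Y$-system satisfy $\sigma^i\leq \sigma\sqrt{1-\rho^2}$ and $\sigma^0\leq \sigma\rho$. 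The constant volatility of the dominating process is affine and hence trivially convex in the state, and monotonicity with respect to the convex order in $\mu$ holds vacuously since none of the coefficients depends on $\mu$ beyond its mean. Initial data trivially match, so the convex order comparison $Y^{i,N}_0\conright X^{i,N}_0$ holds.

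Applying Proposition~\ref{prop:standard-conv-order-particle-sys-new} — identifying the $Y$-system of the corollary with the dominated process $X$ of that proposition, and the $X$-system of~\eqref{eq:CFS} with the dominating process $Y$ — together with Remark~\ref{rem:change-sigma-i}, which is what accommodates the asymmetry between particles in the $Y$-system through the index-dependent coefficients $\sigma^i$, one obtains the particle-level convex order inequality
\[
\EE\big[F(Y^{1,N},\ldots,Y^{N,N})\big]\;\leq\;\EE\big[F(X^{1,N},\ldots,X^{N,N})\big],
\]
which is exactly the stated bound $\mathrm{ESD}(Y^{1,N},\ldots,Y^{N,N})\leq \mathrm{ESD}(X^{1,N},\ldots,X^{N,N})$.

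The main subtlety — and the reason Proposition~\ref{prop:standard-conv-order-particle-sys-new} is needed rather than Theorem~\ref{thm:convex_order_cond} directly applied to a single particle — is that, as noted in~\cite[Remark 5.1]{Liu2023functional} following~\cite{alfonsi2018sampling}, convex order at the marginal law level does not in general lift to the level of empirical measures or symmetric functionals of the particles. The crucial Vlasov-type structure here (with $b$ depending on $\mu$ only through its mean) is precisely what bridges this gap, and the extension to non-symmetric volatilities $\sigma^i$ supplied by Remark~\ref{rem:change-sigma-i} is what makes the comparison quantitatively meaningful for a regulator wishing, for instance, to allow heterogeneous banks while still controlling systemic-risk measures of convex type.
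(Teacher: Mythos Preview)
Your approach is essentially the paper's: apply Proposition~\ref{prop:standard-conv-order-particle-sys-new} together with Remark~\ref{rem:change-sigma-i} to the functional $F(x^1,\ldots,x^N)=\max_{t\in[0,T]}(D-\bar x_t)_+$, after checking that $F$ is convex with polynomial growth and that the common drift is affine of Vlasov type. The paper gives no further detail beyond pointing to those two references, so your verification is at least as complete.

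One point of care: in Proposition~\ref{prop:standard-conv-order-particle-sys-new} and Remark~\ref{rem:change-sigma-i}, the convexity requirement on the diffusion matrix $\Sigma$ is imposed on the \emph{dominated} system (the one playing the role of $X$ in the proposition), not on the dominating one. After your role swap this is the $Y$-system of the corollary, carrying the coefficients $\sigma^i$, not the $X$-system of~\eqref{eq:CFS}. Your sentence ``the constant volatility of the dominating process is affine and hence trivially convex'' therefore places the check on the wrong side. If the $\sigma^i$ are constants, both sides are trivially convex and nothing is lost; if they are genuine functions of the state (as the notation $\sigma^i(Y^{i,N}_t)$ suggests), Remark~\ref{rem:change-sigma-i} requires each $\sigma^i$ to be convex for $\bm{x}\mapsto\Sigma(t,\bm{x})$ to be convex. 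The paper is equally informal on this point, so this is a shared rather than a student-specific gap.
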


From the regulator point's of view, this result allows to control the expected size of default in the case of systemic event by imposing a control on the exposition to idiosyncratic risk of each bank, without further knowledge of the precise form of this exposition (the functions $\sigma^i$). Figure~\ref{fig:CFS} illustrates this control. There, we consider first the model~\eqref{eq:CFS} with $\sigma = 5, \rho = \frac45$ which writes
\begin{align}
\label{eq:fig1CFSX}
    \d X^{i,N}_t = \frac{a}{N} \sum_{j=1}^N \big(X^{j,N}_t - X^{i,N}_t \big) \d t + 4  \d B^i_t \dd  + 3 \d B^0_t, \qquad 1 \le i \le N.
\end{align}
Letting $S(x) = \frac{1}{1 + e^{-0.1 x}}, x \in \R$ be the scaled sigmoid function and taking $\sigma^0 = 2$, the system with variable volatility is given by
\begin{align}
\label{eq:fig1CSFY}
    \d Y^{i,N}_t = \frac{a}{N} \sum_{j=1}^N \big(Y^{j,N}_t - Y^{i,N}_t \big) \d t \dd  + 4 S\big(Y^{i,N}_t \big) \d  B^{i}_t  \dd + 2 \d B^0_t, \qquad 1 \le i \le N. 
\end{align}
We compare both $ESD$ for $D = -0.7$. As predicted by Corollary~\ref{cor:CFS}, the Monte-Carlo estimation of $ESD(Y^{1,N}, \ldots, Y^{N,N})$ is consistently below the one of $ESD(X^{1,N},\ldots, X^{N,N})$, see Figure~\ref{fig:CFS}.

\begin{figure}[h]
        \centering
        \includegraphics[scale=.5]{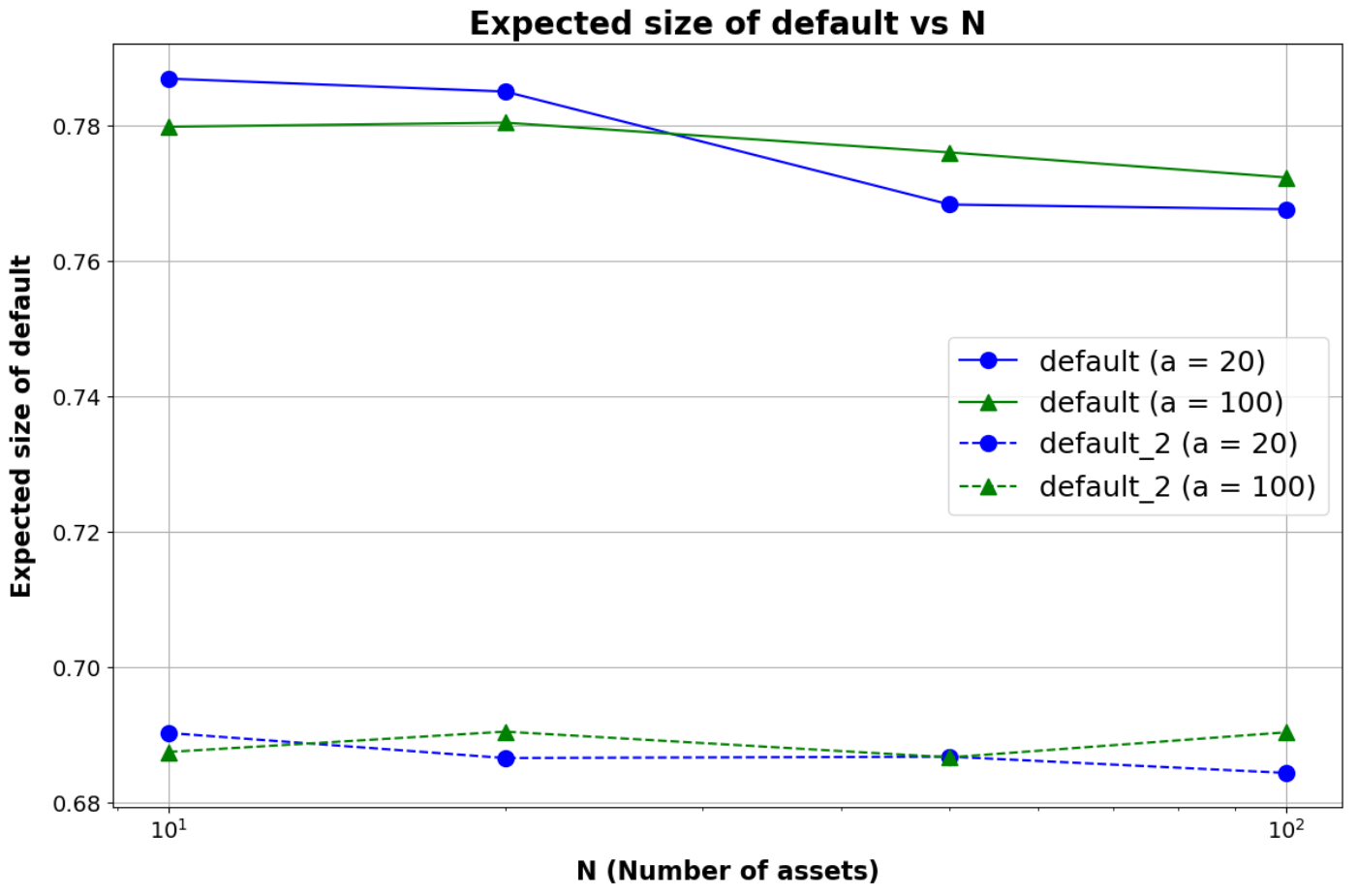}
        \caption{\textit{Estimations of the ESD by Monte-Carlo method with $N_{MC} = 10^4$ iterations as $N$ and $a$ vary. The simulation uses an Euler scheme with $100$ time steps. The full line represents the expected size of default (ESD) for the system~\eqref{eq:fig1CFSX}, while the dotted lines represents the expected size of default for the system with variable volatility~\eqref{eq:fig1CSFY}. Note that both quantities only depend on the empirical mean of the system, which, as already noted by Carmona-Fouque-Sun~\cite{Carmona_Fouque_Sun_2015}, is independent of $a$ (the small variations observed here being due to Monte-Carlo errors).}}
        \label{fig:CFS}
\end{figure}

%

\newpage

\noindent  \textbf{Acknowledgments.}  Y.L. gratefully acknowledges Pr. Pierre Cardaliaguet for suggesting the original idea that inspired this work, and thanks Dr. Julien Claisse for valuable  discussions.

\noindent \textbf{Funding.} A.B. acknowledges support from the AAP ``Accueil'' awarded by Université Claude Bernard Lyon 1. Y.L. is supported by a grant of the Agence Nationale de la
Recherche (ANR) (Grant ANR-23-CE40-0017, Project SOCOT). A CC-BY public copyright license has been
applied by the authors to the present document and will be applied to all subsequent versions
up to the Author Accepted Manuscript arising from this submission, in accordance with the
grant’s open access conditions. Y.L. also acknowledges additional funding from Universit\'e Paris-Dauphine through the ``Appel à projets Jeunes Chercheurs et Chercheuses'' program.

\section*{Appendix}
\appendix
\section{Proofs of Section \ref{sec:proof-cond-conv-order}}\label{sec:appendix-A}

\begin{proof}[Proof of Lemma \ref{lem:Phi_conditional}]
As previously established in the proof of Proposition \ref{prop:marginalEuler}, for a fixed $\omega^0\in\Omega^0$ and for every $m=0, ..., M$, the random variable $\bar{X}_{t_m}(\omega^0, \cdot)$ is $\mathcal{G}_m$-measurable. 
To simplify the notation, we write $\bar{X}_{t_0:t_m}^{\omega^0}$ for $\bar{X}_{t_0}(\omega^0, \cdot), ..., \bar{X}_{t_M}(\omega^0, \cdot)$. 
First, it is obvious that 
\begin{align}
&\Phi_M^{\,\omega^0}\Big(\bar{X}_{t_0:t_M}^{\omega^0}; \mathcal{L}^1(\bar{X}_{t_M})(\omega^0)\Big)=F \big(\bar{X}_{t_0:t_M}^{\omega^0}\big)  =\EE^1\Big[F \big(\bar{X}_{t_0:t_M}^{\omega^0}\big)\,\Big|\, \mathcal{G}_M\Big]. \nonumber 
\end{align}
Assume now  that for some $m \in \{1, \ldots, M\}$ 
\begin{align}
&\Phi_{m+1}^{\,\omega^0}\Big(\bar{X}_{t_0:t_{m+1}}^{\omega^0}; \mathcal{L}^1(\bar{X}_{t_{m+1}})(\omega^0),...,\mathcal{L}^1(\bar{X}_{t_M})(\omega^0)\Big) =\EE^1\Big[F \big(\bar{X}_{t_0:t_M}^{\omega^0}\big)\,\Big|\, \mathcal{G}_{m+1}\Big].\nonumber
\end{align}
We get
\begin{align}
&\EE^1\Big[F \big(\bar{X}_{t_0:t_M}^{\omega^0}\big)\,\Big|\, \mathcal{G}_{m}\Big]=\EE^1\Big[\,\EE^1\Big[F \big(\bar{X}_{t_0:t_M}^{\omega^0}\big)\,\Big|\, \mathcal{G}_{m+1}\Big]\,\Big|\, \mathcal{G}_{m}\Big]\nonumber\\
&=\EE^1\Big[\,\Phi_{m+1}^{\,\omega^0}\Big(\bar{X}_{t_0:t_{m}}^{\omega^0}, \bar{X}_{t_{m}}(\omega^0, \cdot)+h\, b(t_m, \bar{X}_{t_{m}}(\omega^0, \cdot), \mathcal{L}^1(\bar{X}_{t_m})(\omega^0)\nonumber\\
&\hspace{1cm}+\sqrt{h}\,\sigma(t_m, \bar{X}_{t_{m}}(\omega^0, \cdot), \mathcal{L}^1(\bar{X}_{t_m})(\omega^0))Z_{m+1}+\sqrt{h}\,\sigma^0( t_m, \mathcal{L}^1(\bar{X}_{t_m})(\omega^0))Z_{m+1}^{0}(\omega^0);\nonumber\\
&\hspace{1cm}\mathcal{L}^1(\bar{X}_{t_{m+1}})(\omega^0),...,\mathcal{L}^1(\bar{X}_{t_M})(\omega^0)\Big)\,\Big|\, \mathcal{G}_{m}\Big]\nonumber\\
&=\Big(Q_{m+1}^{\,\omega^0}\Phi_{m+1}^{\,\omega^0}\big(\bar{X}_{t_0:t_m}^{\omega^0}, \,\,\cdot\,\,;\mathcal{L}^1(\bar{X}_{t_{m+1}})(\omega^0),...,\mathcal{L}^1(\bar{X}_{t_{M}})(\omega^0)\big)\Big)\nonumber\\
&\qquad\qquad\Big(\bar{X}_{t_m}(\omega^0, \cdot), \mathcal{L}^1(\bar{X}_{t_{m}})(\omega^0), \sqrt{h}\,\sigma(t_m, \bar{X}_{t_{m}}(\omega^0, \cdot), \mathcal{L}^1(\bar{X}_{t_m})(\omega^0))\Big)\nonumber\\
&=\Phi_m^{\,\omega^0}\Big(\bar{X}_{t_0}(\omega^0, \cdot), ..., \bar{X}_{t_m}(\omega^0, \cdot); \mathcal{L}^1(\bar{X}_{t_m})(\omega^0),...,\mathcal{L}^1(\bar{X}_{t_M})(\omega^0)\Big).\nonumber
\end{align}
We conclude by backward induction.
\end{proof}

\begin{proof}[Proof of Lemma \ref{lem: 3eme-tentative}]
We will prove the inequality for $\bar{X}^M$ and $X$ for simplicity, as the proof for 
$\bar{Y}^M$ and $Y$ follows the same reasoning. 
We define for every $\omega^0\in \Omega^{0}$, 
\[u^M(\omega^0)\coloneqq \int_{\Omega^1}\sup_{t\in[0, T]}\left|X_{t}(\omega^0, \omega^1)-\bar{X}^{M}_{t}(\omega^0, \omega^1)\right|^p \PP^{1}(d\omega^1)= \EE^{1}\left[\sup_{t\in[0,T]}\left|X_t(\omega^0, \cdot)-\bar{X}_t^M(\omega^0, \cdot)\right|^p\right].\]
Proposition \ref{prop:cvg_Euler_scheme} implies  $\EE^0[u^M]\rightarrow0$ when $M\rightarrow+\infty$. Hence, there exists a subsequence $\phi(M)$, $ M\geq1$ such that $u^{\phi(M)}\rightarrow 0$ a.s., which means, there exists a subset $\bar{\Omega}^0\subset \Omega^{0}$ such that $\PP^0(\bar{\Omega}^0)=1$ and that for every $\omega^0\in \bar{\Omega}^0$, $u^{\phi(M)}(\omega^0)\rightarrow0$. 
\end{proof}

\begin{proof}[Proof of Lemma \ref{lem:cvg-W_p-mu-tilde}] 
We will prove \eqref{eq:cvg_law_tilde} only for $X$ since the proof for $Y$ follows the same reasoning. 
We know from Lemma \ref{lem: 3eme-tentative} that there exists a subset $\bar{\Omega}^0\subset \Omega^{0}$ such that $\PP^0(\bar{\Omega}^0)=1$  and that for every $\omega^0\in \bar{\Omega}^0$,
\[\EE^1\left[\sup_{t\in[0,T]}\big|X_t(\omega^0, \cdot)-\bar{X}_t^{\phi(M)}(\omega^0, \cdot)\big|^p\right]\xrightarrow[]{M\rightarrow+\infty}0.\]
Hence,  for every $\omega^0\in \bar{\Omega}^0$, 
\begin{equation}\label{eq:cvg-W_p-omega_0}
\sup_{t\in[0, T]}\mathcal{W}_{p}\Big(\mathcal{L}\big(X_t(\omega^0, \cdot)\big), \mathcal{L}\big(\bar{X}_t^{\phi(M)}(\omega^0, \cdot)\big)\Big)\rightarrow 0 \,\,\text{ as }\, M\rightarrow+\infty.
\end{equation}
On the other hand, for every $\omega^0\in \bar{\Omega}^0$, and for every $m\in\{0, \ldots, M\}$ and $t\in[t_{m}^{M}, t_{m+1}^{M}]$, we have
\begin{flalign}
&\mathcal{W}_{p}^{p}\Big(\mathcal{L}\big(\bar{X}_t^{M}(\omega^0, \cdot)\big), \widetilde{\mu}^{M}_{t}(\omega^0)\Big)\leq \EE^1 \left[\left|\bar{X}^{M}_{t}(\omega^0, \cdot )-\widetilde{X}^{M}_{t}(\omega^0, \cdot )\right|^{p}\right]\nonumber\\
&\hspace{1cm}=\EE^1 \left[\bigg|\bar{X}^{M}_{t}(\omega^0, \cdot )-\mathbbm{1}_{\left\{U_{m}\leq \frac{M\left(t_{m+1}^{M}-t\right)}{T}\right\}}\bar{X}^{M}_{t_{m}}(\omega^0, \cdot )-\mathbbm{1}_{\left\{U_{m}> \frac{M\left(t_{m+1}^{M}-t\right)}{T}\right\}}\bar{X}^{M}_{t_{m+1}}(\omega^0, \cdot )\bigg|^{p}\right]&\nonumber\\
&\hspace{1cm}\leq \EE^1 \left[\left|\bar{X}^{M}_{t}(\omega^0, \cdot )-\bar{X}^{M}_{t_{m}}(\omega^0, \cdot )\right|^{p}\right]+\EE^1\left[ \left|\bar{X}^{M}_{t}(\omega^0, \cdot )-\bar{X}^{M}_{t_{m+1}}(\omega^0, \cdot )\right|^{p}\right].&\nonumber
\end{flalign}
It follows from Proposition \ref{prop:cvg_Euler_scheme} that for every $ s, t\in[t_{m}^{M}, t_{m+1}^{M}], \;s<t, $
\[\EE \left[\left|\bar{X}^{M}_{t}-\bar{X}^{M}_{s}\right|^{p}\right]=\EE^0\left[\EE^1 \left[\left|\bar{X}^{M}_{t}(\omega^0, \omega^1)-\bar{X}^{M}_{s}(\omega^0, \omega^1)\right|^{p}\right]\right]\xrightarrow{M\to+\infty}0.\]
Hence, there exists a subset $\bar{\bar{\Omega}}^0\subset \bar{\Omega}^0$ and a subsequence, still denoted 
$\phi(M)$  by a slight abuse of notation, such that $\PP^0(\bar{\bar{\Omega}}^0)=1$ and that for every $\omega^0\in \bar{\bar{\Omega}}^0$, we have
\[\EE^1 \left[\left|\bar{X}^{\phi(M)}_{t}(\omega^0, \cdot)-\bar{X}^{\phi(M)}_{s}(\omega^0, \cdot)\right|^{p}\right]\xrightarrow{M\to+\infty}0.\]
Consequently, for every $\omega^0\in \bar{\bar{\Omega}}^0$, $\sup_{t\in[0,T]}\mathcal{W}_{p}^{p}\big(\mathcal{L}\big(\bar{X}_t^{\phi(M)}(\omega^0, \cdot)\big), \widetilde{\mu}^{\phi(M)}_{t}(\omega^0)\big)\xrightarrow{M\to+\infty}0$.  Combining this with the following triangle inequality 
\begin{align}
\sup_{t\in[0,T]}\mathcal{W}_{p}^{p}\left(\widetilde{\mu}^{\phi(M)}_{t}(\omega^0), \mathcal{L}\big(X_t(\omega^0, \cdot)\big) \right)\leq &\sup_{t\in[0,T]}\mathcal{W}_{p}^{p}\left( \mathcal{L}\big(\bar{X}_t^{\phi(M)}(\omega^0, \cdot)\big), \mathcal{L}\big(X_t(\omega^0, \cdot)\big) \right)\nonumber\\
&+\sup_{t\in[0,T]}\mathcal{W}_{p}^{p}\Big(\mathcal{L}\big(\bar{X}_t^{\phi(M)}(\omega^0, \cdot)\big), \widetilde{\mu}^{\phi(M)}_{t}(\omega^0)\Big)\nonumber
\end{align}
 we conclude that the left-hand-side  converges to 0 when $M\rightarrow+\infty$.
\end{proof}

\begin{proof}[Proof of Lemma \ref{lem:prop_phi}]
$(i)$ Assumption \ref{Ass:Assumption-standard-cv-new} implies that the function $\mathfrak{b}$ is affine in $(x,y)$, which means, there exist three $\R^d$-valued functions $\mathfrak{b}_1,\,\mathfrak{b}_2,\,\mathfrak{b}_3$ on $[0,T]$ such that for every $t \in \,[0,T]$ and $x,y\in \RD, $
\[ \mathfrak{b}(t,x,y) = \mathfrak{b}_1(t)\,x + \mathfrak{b}_2(t)\,y + \mathfrak{b}_3(t).\] 
Hence, for every $\bm{x}=(x^1,\ldots,x^N) \in (\RD)^N$ and for every fixed  $i\in\{1,\ldots,N\}$,
\begin{align*}
\phi_{\mathfrak{b}}^i(t,\bm x)=\frac{1}{N}\sum_{j=1}^N \mathfrak{b}(t,x^i,x^j) &= \frac{1}{N}\sum_{j=1}^N \Big(\mathfrak{b}_1(t)\,x^i+ \mathfrak{b}_2(t)\,x^j + \mathfrak{b}_3(t) \Big) \\
& = \mathfrak{b}_1(t)\,x^i + \mathfrak{b}_2(t)\frac{1}{N}\sum_{j=1}^N x^j + \mathfrak{b}_3(t).
\end{align*}
which implies that $\phi_{\mathfrak{b}}^i(t,\bm x)$ is affine in $\bm x$.

\noindent $(ii)$ Assumption \ref{Ass:Assumption-standard-cv-new} implies that the function $\zeta$ is convex in $(x,y)$ with respect to the partial matrix order \eqref{eq:def_matrix_partial_order}. Thus, for every $\bm {x}=(x_1, ..., x_N)$ and  $\bm {y}=(y_1, ..., y_N)\in(\RD)^N$, we have 
\begin{align*}
\phi_\zeta^i(t, \lambda\,\bm x + (1-\lambda)\, \bm y) & = \frac{1}{N}\sum_{j=1}^N \zeta(t,\lambda\,x^i + (1-\lambda)\,y^i, \lambda\,x^j + (1-\lambda)\,y^j) \\
&\preceq \frac{1}{N}\sum_{j=1}^N  \Big[\lambda\ \zeta(t,x^i,x^j) + (1-\lambda)\, \zeta(t,y^i,y^j)\Big] \\
& = \lambda\, \phi_\zeta^i(t,\bm x) + (1-\lambda)\, \phi_\zeta^i(t,\bm y).\hfill \qedhere
\end{align*}
\end{proof}

\begin{proof}[Proof of Lemma \ref{lem:matrix-order-block}]
To establish that $\bm{A} \preceq \bm{B}$, it suffices to show that the matrix $ \bm{B} \bm{B}^\top -  \bm{A} \bm{A}^\top$ is positive semi-definite. A straightforward computation yields:
{\small 
\begin{align}
\bm{B}\bm{B}^\top - \bm{A}\bm{A}^\top
= \begin{pmatrix}
B_1 B_1^\top - A_1 A_1^\top & & \\
& \ddots & \\
& & B_N B_N^\top - A_N A_N^\top
\end{pmatrix}
+ \left(\bar{\theta}_0^2 - \bar{\sigma}_0^2\right) 
\begin{pmatrix}
\mathbf{I}_d & \cdots & \mathbf{I}_d \\
\vdots & \ddots & \vdots \\
\mathbf{I}_d & \cdots & \mathbf{I}_d 
\end{pmatrix}.
\label{eq:matrix-sum}
\end{align}
}
The first matrix in \eqref{eq:matrix-sum} is block-diagonal and positive semi-definite by assumption, using that $A_n \preceq B_n$ implies $B_n B_n^\top - A_n A_n^\top \succeq 0$ for all $n$. The second matrix in \eqref{eq:matrix-sum} is of the form $J_N \otimes \mathbf{I}_d$, where $J_N$ is the $N \times N$ matrix of ones and $\otimes$ denotes the Kronecker product. Since $J_N \succeq 0$ and $\mathbf{I}_d \succeq 0$, their Kronecker product is positive semi-definite. Moreover, $\bar{\theta}_0^2 \geq \bar{\sigma}_0^2$ by assumption, so $\bm{B} \bm{B}^\top - \bm{A} \bm{A}^\top$ is a sum of positive semi-definite matrices, and hence is positive semi-definite.
\end{proof}


\section{Proofs of Section \ref{sec:proof-incre-conv-order}} \label{sec:proof_lemmas_incre_conv_order}

\begin{proof}[Proof of Lemma \ref{lem:operator-for-icv}]
$(i)$ For every $x_1, x_2, u_1, u_2 \in \R$, $\mu \in \mathcal{P}(\R)$  and for every $\lambda\in [0,1]$, we have 
\begin{align}
&\big(Q_{m+1}^{\,\omega^0,h}\varphi\big) \big(\lambda x_1 + (1-\lambda)x_2, \mu, \lambda u_1 + (1-\lambda)u_2\big) \nonumber\\
&= \EE^1\Big[\varphi\big(\lambda x_1 + (1-\lambda)x_2+h\,b(t_m, \lambda x_1 + (1-\lambda)x_2, \mu)+\sqrt{h}\,\big(\lambda u_1 + (1-\lambda)u_2\big)\,Z^h_{m+1}\nonumber\\
&\qquad \qquad +\sqrt{h}\,\sigma^0(t_m, \mu)\,Z^0_{m+1}(\omega^0)\big)\Big]\nonumber\\
&\leq \lambda \big(Q_{m+1}^{\,\omega^0,h}\varphi\big) \big(x_1 , \mu,  u_1\big)+ (1-\lambda) \big(Q_{m+1}^{\,\omega^0,h}\varphi\big) \big(x_2 , \mu,  u_2\big),
\end{align}
where the inequality follows from the convexity of the drift \( b \), and the fact that \( \varphi \) is both
non-decreasing and convex.

\noindent $(ii)$ The conclusion follows directly from \cite[Lemma 4.1\,$(a)$]{liu2021monotone}, noting that for every fixed \( m \) in \( \{0, \ldots, M-1\} \) and \( \omega^0 \in \Omega^0 \), the function
\[v\in\R\mapsto \varphi\big(x+h\,b(t_m, x, \mu)+\sqrt{h}v+\sqrt{h}\,\sigma^0(t_m, \mu)\,Z^0_{m+1}(\omega^0)\big)\in\R\]
is convex. 
\end{proof}

\begin{proof}[Proof of Lemma \ref{lem:icv-marginal-Euler}]
The proof of Lemma~\ref{lem:icv-marginal-Euler} follows directly from \cite[Proposition 4.3]{liu2021monotone}.  
Indeed, observe that if a function \( \varphi : \R \to \R \) is convex and non-decreasing, then for any constant \( C \in \R \), the shifted function \( v \mapsto \varphi(v + C) \) remains convex and non-decreasing.  
Applying \cite[Proposition 4.3]{liu2021monotone} to the function \( v \mapsto \varphi(v + C) \) with $C = \sqrt{h}\,\sigma^0(t_m, \mu)\,Z^0_{m+1}(\omega^0)$
yields the desired result.
\end{proof}

\begin{proof}[Proof of Proposition \ref{prop:cvg_truncated_Euler_scheme}] {\sc Part 1.} We prove $\left\| \sup_{t\in[0,T]} \left|\widetilde{X}^M_t\right|\right\|_{p}\leq C(1+\vertii{X_0}_p)$.

\noindent\textit{Step 1.} We first prove $\left\| \sup_{t\in[0,T]} \left|\widetilde{X}^M_t\right|\right\|_{p}<+\infty$.

For $t\in[0,T]$,  denoting   $\underline{t}=t_m$ if $t\in [t_m,t_{m+1})$  for $m \in \{1,\ldots, M-1\}$,  one checks that
\begin{align} \label{eq:sup2max}
    \left\| \sup_{t\in[0,T]} \left|\widetilde{X}^M_t\right|\right\|_{p} & = \left\|\sup_{t\in[0,T]}\left|\frac{\underline{t}+h-t}{h}\widetilde{X}^M_{\underline{t}} + \frac{t-\underline{t}}{h} \widetilde{X}^M_{\underline{t}+h} \right|\right\|_{p}  \leq  2 \left\|\max_{0\leq m\leq M} \left|\widetilde{X}^M_{t_m}\right| \right\|_{p}. 
\end{align}
We now prove that \begin{equation}\label{eq:desired-bound}
\Big\|\max_{0\leq m\leq M} \big|\widetilde{X}^M_{t_m}\big| \Big\|_{p}<\infty    
\end{equation}  by forward induction. First, Assumption \ref{Ass:AssumptionI} implies that $\big\|\widetilde{X}^M_{0}\big\|_p=\|X_0\|_p<+\infty$. Hence \eqref{eq:desired-bound} holds true with $m=0$.  Assume now $\left\|\max_{0\leq l\leq m} \left|\widetilde{X}^M_{t_l}\right| \right\|_{p}<\infty$ for some $m \in \{0, \ldots, M-1\}$. We aim to show that
$\left\|\max_{0\leq l\leq m+1} \left|\widetilde{X}^M_{t_l}\right| \right\|_{p}<\infty.$ 
Since
\[\left\|\max_{0\leq l\leq m+1} \left|\widetilde{X}^M_{t_l}\right| \right\|_{p}\leq \left\|\max_{0\leq l\leq m} \left|\widetilde{X}^M_{t_l}\right| \right\|_{p}+\left\|\widetilde{X}^M_{t_{m+1}} \right\|_{p},\]
 the proof of the induction step reduces to showing
$\left\|\widetilde{X}^M_{t_{m+1}} \right\|_{p}<+\infty$.  A direct application of Minkowski's inequality yields
\begin{align}
&\left\|\widetilde{X}^M_{t_{m+1}} \right\|_{p}\leq \left\|\widetilde{X}^M_{t_{m}}\right\|_{p}+\left\| h\, b(t_{m}, \widetilde{X}^M_{t_{m}},\, \mathcal{L}^1(\widetilde{X}^M_{t_{m}})) \right\|_{p}\nonumber\\
&\quad +\left\|\sqrt{ h\,}\,\sigma(t_{m}, \widetilde{X}^M_{t_{m}}, \mathcal{L}^1(\widetilde{X}^M_{t_{m}}))Z^{h}_{m+1}\right\|_{p}+\left\|\sqrt{ h\,}\,\sigma^0(t_{m},  \mathcal{L}^1(\widetilde{X}^M_{t_{m}}))Z^{0}_{m+1}\right\|_{p}. 
\end{align}
In the following, each of the four terms on the right-hand side of the above inequality will be bounded separately. The first term $\left\|\widetilde{X}^M_{t_{m}}\right\|_{p}$ is finite by the induction hypothesis. 
For the second term, by Assumption \ref{Ass:AssumptionI} and the fact that \( \big\Vert \mathcal{W}_p(\mathcal{L}^1(\widetilde{X}^M_{t_m}), \delta_0) \big\Vert_p \leq \big\Vert \widetilde{X}^M_{t_m} \big\Vert_p \) (see \cite[Lemma 2.6 and Remark 2.7]{gall2024particle}), we obtain
\begin{align}
\left\| h\, b(t_{m}, \widetilde{X}^M_{t_{m}},\, \mathcal{L}^1(\widetilde{X}^M_{t_{m}})) \right\|_{p}\leq  h C_{b, L}\Big(1+\Vert \widetilde{X}^M_{t_{m}}\Vert_{p}+\Vert \mathcal{W}_p(\mathcal{L}^1(\widetilde{X}^M_{t_{m}}), \delta_0) \Vert_{p}\Big)   \leq  C_{T, b, L}\Big(1+2\Vert \widetilde{X}^M_{t_{m}}\Vert_{p}\Big).\nonumber
\end{align}
For the third term, 
\begin{align}
&\left\|\sqrt{ h\,}\,\sigma(t_{m}, \widetilde{X}^M_{t_{m}}, \mathcal{L}^1(\widetilde{X}^M_{t_{m}}))Z^{h}_{m+1}\right\|_{p}\leq \sqrt{h}\left\| \vertiii{\sigma(t_{m}, \widetilde{X}^M_{t_{m}}, \mathcal{L}^1(\widetilde{X}^M_{t_{m}}))}\cdot \left|Z^{h}_{m+1}\right|\right\|_p\nonumber\\
&\qquad \leq \sqrt{ h\,} \cdot\frac{1}{2\sqrt{h}\left([\sigma]_{\mathrm{Lip}_x}\right)} \left\| \vertiii{\sigma(t_{m}, \widetilde{X}^M_{t_{m}}, \mathcal{L}^1(\widetilde{X}^M_{t_{m}}))}\right\|_p \nonumber\\
&\qquad \leq \frac{1}{2\left([\sigma]_{\mathrm{Lip}_x}\right)} C_{\sigma, L}\Big(1+\Vert \widetilde{X}^M_{t_{m}}\Vert_{p}+\Vert \mathcal{W}_p(\mathcal{L}^1(\widetilde{X}^M_{t_{m}}), \delta_0) \Vert_{p}\Big) \nonumber\\
&\qquad \leq C_{\sigma, L}\Big(1+2\Vert \widetilde{X}^M_{t_{m}}\Vert_{p}\Big)<+\infty.\label{eq:app-b4}
 \end{align}
Here, the first inequality in \eqref{eq:app-b4} follows from the definition of the operator norm \( \vertiii{\cdot} \), the second from the definition of \( T^h \) in \eqref{eq:def_Th}, the third from Assumption \ref{Ass:AssumptionI}, and the last from the fact that \( \big\Vert \mathcal{W}_p(\mathcal{L}^1(\widetilde{X}^M_{t_m}), \delta_0) \big\Vert_p \leq \big\Vert \widetilde{X}^M_{t_m} \big\Vert_p \) (see again \cite[Lemma 2.6 and Remark 2.7]{gall2024particle}).
For the last term, 
\begin{align}
&\left\|\sqrt{ h\,}\,\sigma^0\big(t_{m},  \mathcal{L}^1(\widetilde{X}^M_{t_{m}})\big)Z^{0}_{m+1}\right\|_{p}\nonumber\\
&\quad =\left\Vert\sigma^0\big(t_{m}, \mathcal{L}^1(\widetilde{X}^M_{t_{m}})\big)\,(B^0_{t_{m+1}}-B^0_{t_{m}})\right\Vert_p=\left\Vert \int_{t_m}^{t_{m+1}}\sigma^0\big(t_{m}, \mathcal{L}^1(\widetilde{X}^M_{t_{m}})\big)\,\d B^0_s\right\Vert_p\nonumber\\
&\quad \leq \left\Vert \int_{0}^{t_{m+1}}\sigma^0\big(t_{m}, \mathcal{L}^1(\widetilde{X}^M_{t_{m}})\big)\,\d B^0_s\right\Vert_p+\left\Vert \int_{0}^{t_{m}}\sigma^0\big(t_{m}, \mathcal{L}^1(\widetilde{X}^M_{t_{m}})\big)\,\d B^0_s\right\Vert_p.\nonumber
\end{align}
By applying first the Burkhölder–Davis–Gundy Inequality,  then the generalized Minkowski's inequality   (see e.g. \cite[Section 7.8.1]{pages2018numerical}), for every $t\in[0,T]$, we have 
\begin{align}
&\left\Vert \int_{0}^{t}\sigma^0\big(t_{m},  \mathcal{L}^1(\widetilde{X}^M_{t_{m}})\big)\,\d B^0_s\right\Vert_p\leq C_{d,p}^{BDG}\left\Vert \sqrt{\int_{0}^{t}\vertiii{\sigma^0\big(t_{m},  \mathcal{L}^1(\widetilde{X}^M_{t_{m}})\big)}^{2}ds}\right\Vert_{p}\nonumber\\
&\leq C_{d,p}^{BDG}\left[\int_{0}^{t}\left\Vert\,\vertiii{\sigma^0\big(t_{m},  \mathcal{L}^1(\widetilde{X}^M_{t_{m}})\big)}\,\right\Vert_p^{2}ds\right]^{\frac{1}{2}}\nonumber\\
&\leq C_{d,p}^{BDG}\left[\int_{0}^{t}\left[\,C_{\sigma^0,L}\big(1+\Vert\mathcal{W}_p( \mathcal{L}^1(\bar{X}_{t_{m}}^{M}),\delta_0)\big)\Vert_p\,\right]^{2}ds\right]^{\frac{1}{2}}\nonumber\\
&\leq C_{d,p,L,\sigma^0,T}\big(1+\Vert\bar{X}_{t_{m}}^{M}\Vert_p\big)<+\infty.\nonumber
\end{align}
Hence, $\left\|\sqrt{ h\,}\,\sigma\big(t_{m},  \mathcal{L}^1(\widetilde{X}^M_{t_{m}})\big)Z^{0}_{m+1}\right\|_{p}<+\infty$ and we can conclude by applying a forward induction. 

\noindent\textit{Step 2.} We now aim to prove that $\left\| \sup_{t\in[0,T]} \left|\widetilde{X}^M_t\right|\right\|_{p}\leq C(1+\vertii{X_0}_p)$. 
By \eqref{eq:sup2max}, it suffices to show that \[\left\| \max_{0\leq m\leq M} \left|\widetilde{X}^M_{t_m}\right|\right\|_{p}\leq C(1+\vertii{X_0}_p).\]
The proof of the above inequality  proceeds analogously to \cite[Proof of Lemma 3.2]{gall2024particle}, once we observe the following inequality: 
\begin{align}
&\EE\left[\left|\sqrt{ h\,}\,\sigma(t_{m}, \widetilde{X}^M_{t_{m}}, \mathcal{L}^1(\widetilde{X}^M_{t_{m}}))Z^{h}_{m+1}\right|^p\right]=\EE\left[\left|\sqrt{ h\,}\,\sigma(t_{m}, \widetilde{X}^M_{t_{m}}, \mathcal{L}^1(\widetilde{X}^M_{t_{m}}))\right|^p\left|Z^{h}_{m+1}\right|^p\right]\nonumber\\
&=\EE\left[\left|\sqrt{ h\,}\,\sigma(t_{m}, \widetilde{X}^M_{t_{m}}, \mathcal{L}^1(\widetilde{X}^M_{t_{m}}))\right|^p\left|Z_{m+1}\right|^p\mathbbm{1}_{\{Z_{m+1}\leq c_{h,\sigma}\}}\right]\leq \EE\left[\left|\sqrt{ h\,}\,\sigma(t_{m}, \widetilde{X}^M_{t_{m}}, \mathcal{L}^1(\widetilde{X}^M_{t_{m}}))Z_{m+1}\right|^p\right].\nonumber
\end{align}

\noindent{\sc Part 2.} The proof of the convergence
$\left\|\sup_{t\in[0, T]}\left|X_{t}-\widetilde{X}^M_{t}\right|\right\|_r \xrightarrow[M \rightarrow +\infty]{} 0$ can be established analogously to the proof of \cite[Proposition 5.2]{liu2021monotone} once we obtain the result in Part 1.

\smallskip
\noindent{\sc Part 3.} The existence of the subsequence $\phi(M)$, $M\geq1$ such that $\phi(M) \to \infty$ as $M \to \infty$, and the subset $\bar{\Omega}^0\subset \Omega^{0}$ such that $\PP^0(\bar{\Omega}^0)=1$ and for every $\omega^0\in \bar{\Omega}^0$,
\[ \EE^{1}\left[\sup_{t\in[0,T]}\left|X_t(\omega^0, \cdot)-\widetilde{X}_t^{\phi(M)}(\omega^0, \cdot)\right|^r\right]\xrightarrow{\,M\rightarrow+\infty\,} 0\quad \]
can be established by applying a method similar to that in the proof of Lemma \ref{lem: 3eme-tentative}.
\end{proof}\dd

\printbibliography
\end{document}